\tikzset{
	-Latex,auto,node distance =1 cm and 1 cm,semithick,
	state/.style ={ellipse, draw, minimum width = 0.7 cm},
	point/.style = {circle, draw, inner sep=0.04cm,fill,node contents={}},
	bidirected/.style={Latex-Latex,dashed},
	el/.style = {inner sep=2pt, align=left, sloped}
}
\pgfplotsset{compat=1.15}
\newcounter{BMatrix}
\newcommand{\setmaxwd}[1]{%
  \eqmakebox[BM-\theBMatrix][\BMalign]{$#1$}%
}
\newtheorem{theorem}{Theorem}
\newtheorem{proposition}{Proposition}[section]
\newtheorem{corollary}{Corollary}[theorem]
\newtheorem{lemma}{Lemma}
\theoremstyle{definition}
\newtheorem{definition}{Definition}
\theoremstyle{remark}
\newtheorem{remark}{Remark}
\newtheorem*{remark*}{Remark}
\newcommand{\inv}{^{-1}}
\newcommand{\1}{\mathds{1}}
\newcommand{\R}{\mathbb{R}}
\newcommand{\Z}{\mathbb{Z}}
\newcommand{\ra}{\rightarrow}
\newcommand{\la}{\leftarrow}
\newcommand{\cd}{\cdot}
\newcommand{\ds}{\dots}
\newcommand{\mrm}[1]{\mathrm{#1}}
\newcommand{\KH}{\mathrm{K}_{\mathrm{H}}}
\newcommand{\KS}{\mathrm{K}_{\mathrm{S}}}
\newcommand{\PiH}{\Pi_{\mathrm{H}}}
\newcommand{\PiS}{\Pi_{\mathrm{S}}}
\newcommand{\cA}{\mathcal{A}}
\newcommand{\cF}{\mathcal{F}}
\newcommand{\cG}{\mathcal{G}}
\newcommand{\cH}{\mathcal{H}}
\newcommand{\cP}{\mathcal{P}}
\newcommand{\cQ}{\mathcal{Q}}
\newcommand{\cR}{\mathcal{R}}
\newcommand{\cS}{\mathcal{S}}
\newcommand{\cT}{\mathcal{T}}
\newcommand{\spnorm}[1]{\left|#1\right|_\mathrm{span}}
\newcommand{\set}[1]{\left\{{#1}\right\}}
\newcommand{\norminf}[1]{\left\|#1\right\|_{\infty}}
\newcommand{\abs}[1]{\left|#1\right|}
\newcommand{\sqbk}[1]{\left[ #1 \right]}
\newcommand{\sqbkcond}[2]{\left[ #1 \middle| #2 \right]}
\newcommand{\crbk}[1]{\left( #1 \right)}
\newcommand{\bd}[1]{\mathbf{#1}}
\newcommand{\argmax}[1]{\underset{#1}{\operatorname{arg}\,\operatorname{max}}\;}
\newcommand{\argmin}[1]{\underset{#1}{\operatorname{arg}\,\operatorname{min}}\;}
\definecolor{azure}{rgb}{0.0, 0.4, 0.9}
\definecolor{darkred}{rgb}{0.6, 0, 0}
\definecolor{codegreen}{rgb}{0,0.4,0}
\definecolor{codeblue}{rgb}{0.1,0.1,0.7}
\definecolor{codegray}{rgb}{0.5,0.5,0.5}
\definecolor{codepurple}{rgb}{0.58,0,0.82}
\definecolor{backcolour}{rgb}{0.97,0.97,0.97}
\lstdefinestyle{mystyle}{
    backgroundcolor=\color{backcolour},   
    commentstyle=\color{codegreen},
    keywordstyle=\color{magenta},
    numberstyle=\tiny\color{codegray},
    stringstyle=\color{codepurple},
    basicstyle=\scriptsize\ttfamily,
    identifierstyle=\color{codeblue},
    breakatwhitespace=false,         
    breaklines=true,                 
    captionpos=b,                    
    keepspaces=true,                 
    numbers=left,                    
    numbersep=4pt,                  
    showspaces=false,                
    showstringspaces=false,
    showtabs=true,                  
    tabsize=3
}
\numberwithin{equation}{section}
\title{Bellman Optimality of Average-Reward Robust Markov Decision Processes with a Constant Gain}
\author[1]{Shengbo Wang}
\affil[1]{Daniel J. Epstein Department of Industrial and Systems Engineering\\
University of Southern California}
\author[2]{Nian Si}
\affil[2]{Department of Industrial Engineering and Decision Analytics\\
Hong Kong University of Science and Technology}
\date{October 2025}
\begin{document}
\maketitle
\begin{abstract}
    Learning and optimal control under robust Markov decision processes (MDPs) have received increasing attention, yet most existing theory, algorithms, and applications focus on finite-horizon or discounted models. Long-run average-reward formulations, while natural in many operations research and management contexts, remain underexplored. This is primarily because the dynamic programming foundations are technically challenging and only partially understood, with several fundamental questions remaining open. This paper steps toward a general framework for average-reward robust MDPs by analyzing the constant-gain setting. We study the average-reward robust control problem with possible information asymmetries between the controller and an S-rectangular adversary. Our analysis centers on the constant-gain robust Bellman equation, examining both the existence of solutions and their relationship to the optimal average reward. Specifically, we identify when solutions to the robust Bellman equation characterize the optimal average reward and stationary policies, and we provide one-sided weak communication conditions ensuring solutions' existence. These findings expand the dynamic programming theory for average-reward robust MDPs and lay a foundation for robust dynamic decision making under long-run average criteria in operational environments.
\end{abstract}

\section{Introduction}

Markov Decision Processes (MDPs) provide a foundational framework for modeling sequential decision-making under uncertainty, underpinning much of modern data-driven dynamic decision-making and reinforcement learning (RL) \citep{sutton2018reinforcement}. Data-driven stochastic control continues to advance and attract new research interest \citep{berberich2024model_predictive_ctrl,guo2025fastpolicylearninglinear}. In parallel, the past decade has witnessed remarkable successes of RL algorithms in increasingly sophisticated simulated environments—including superhuman performance in Atari games \citep{mnih2013playing}, mastery of Go \citep{silver2017mastering}, and progress toward AI reasoning agents \citep{guo2025deepseek}.

Nevertheless, generalizability and robustness of these methods to out-of-sample, real-world environments remain limited, owing to model misspecification and sim-to-real gaps that can arise from discrepancies in dynamics, partial observability, stochasticity, and unaccounted real-world perturbations. To bridge this gap and enhance policy reliability in practical deployment, the robust MDP framework has emerged as a principled approach, explicitly accounting for model ambiguity and worst-case environment disturbances while still preserving (in most cases) the tractability of MDP models \citep{iyengar2005robust,nilim2005robust,mannor2016robust,wiesemann2013robust,wang2023foundationRMDP}.

While discounted-reward robust MDPs have been relatively well-studied, the average-reward setting introduces significant theoretical challenges that remain largely unresolved. In particular, the optimality conditions, commonly referred to as the Bellman optimality equations, have not yet been fully characterized for average-reward robust MDPs. This gap hinders the development of efficient algorithms, sample-efficient learning methods, and performance guarantees, motivating our investigation into these foundational open problems.

In this paper, we consider optimality conditions for average-reward robust Markov decision processes (MDPs). Specifically, we consider the finite-state and finite-action setting and define
$$\overline{\alpha}(\mu,\Pi,\mrm{K})
:= \sup_{\pi \in \Pi}\inf_{\kappa \in \mrm{K}}
\limsup_{n \ra \infty} 
E_\mu^{\pi,\kappa} \left[\frac{1}{n}\sum_{k=0}^{n-1} r(X_k,A_k)\right],$$
where $\Pi$ and $\mrm{K}$ denote the controller’s and adversary’s policy classes, respectively, $\mu$ is the initial distribution, and $r(\cdot,\cdot)\in[0,1]$ is a bounded reward function. We consider S-rectangular adversary policy classes, in which the adversary’s perturbation of the transition probabilities is state-wise separable: the choice at one state does not affect the set of admissible choices at any other state. However, action dependencies within each state may remain coupled, thereby enforcing constraints among the adversary’s choices for different actions at the same state. Moreover, both controller and adversary's policy classes may be history-dependent (denoted by $\PiH$ and $\KH$) or stationary (denoted by $\PiS$ and $\KS$), with potentially asymmetric information structures between the controller and adversary decisions.

We are interested in the constant-gain Bellman optimality, that is, identifying conditions under which  
$\overline{\alpha}(\mu,\Pi,\mrm{K}) = \alpha^*$ holds for all initial distributions $\mu$. Here, $\alpha^* \in [0,1]$ is part of a solution pair $(u^*, \alpha^*)$ to the following robust Bellman equation with a constant gain:
\begin{equation}\label{eqn:r_bellman_eqn_const_gain0}
    u^*(s) 
    = \sup_{\phi \in \cQ}\inf_{p_s \in \cP_s}
      E_{\phi,p_s}\!\left[r(s,A_0) - \alpha^* + u^*(X_1)\right],
\end{equation}
where  the expectation is taken w.r.t. the measure $P_{\phi,p_s}(A_0 = a,X_1 = s') = \phi(a)p_{s,a}(s')$.
Here, $\cQ \subseteq \mathcal{P}(\mathcal{A})$ denotes the controller’s admissible decision set at state $s$.  
In particular, we consider two types of decision sets:  
(i) the deterministic policy set $\cQ = \{\delta_a : a \in \mathcal{A}\}$, where $\delta_a$ is the Dirac measure at action $a$; and  
(ii) the fully randomized action set $\cQ = \mathcal{P}(\mathcal{A})$.  
Moreover, $\cP_s$ denotes the adversary’s decision set, which can be understood as the projection of the ambiguity set $\cP$ onto state $s$.

In standard MDP settings, it is well understood that weak communication is sufficient for constant-gain Bellman optimality, and that a solution to the Bellman equation characterizes an optimal stationary \emph{deterministic} policy \citep{puterman2014MDP}. Extending these results to robust MDPs, however, presents significant theoretical challenges for several reasons:
\begin{enumerate}
    \item The relationship between the solution of the robust Bellman equation and the optimal robust control value is not straightforward. \citet{grand2023beyond} provides an example, adapted from the classical \emph{Big Match} game, showing that the optimal control value $\overline{\alpha}(\mu,\PiH,\mrm{K})$ can be strictly larger than the value obtained from any stationary policy.
    \item It is well known that, to achieve optimal decision-making, robust MDPs may require randomized policies \citep{wiesemann2013robust}. Consequently, there is no reason to expect a randomized policy to be Blackwell optimal \citep{grand2023beyond}. This invalidates a direct application of the standard arguments that establish Bellman optimality in classical MDPs.
    \item Weak communication-type assumptions are much harder to analyze under the controller-adversary dynamics. In particular, different stationary controller/adversary policies can induce different communicating classes. Hence, unlike classical MDPs, it is unclear a priori what the communicating class associated with an optimal policy should be for robust MDPs.
\end{enumerate}

In this paper, we first clarify the implications of a solution to the constant-gain robust Bellman equation \eqref{eqn:r_bellman_eqn_const_gain0}, including the extent to which it characterizes the optimal robust control value and the associated policies. When \eqref{eqn:r_bellman_eqn_const_gain0} does characterize the optimal robust control, we introduce one-sided (weak) communication conditions--natural generalizations of their classical MDP counterparts to robust MDPs--and show that they are sufficient to ensure the existence of a solution to \eqref{eqn:r_bellman_eqn_const_gain0}. Specifically, the controller is said to be (weakly) communicating if, for every stationary controller policy, the induced MDP faced by the adversary is (weakly) communicating. Likewise, the adversary is (weakly) communicating if every MDP within the ambiguity set $\cP$ is (weakly) communicating. For cases in which Bellman optimality fails due to information asymmetry, we provide a thorough treatment that yields a necessary and sufficient characterization of the control value. Our main results can be summarized as follows.

\begin{itemize}
    \item When the robust Bellman equation \eqref{eqn:r_bellman_eqn_const_gain0} admits a solution $(u^*,\alpha^*)$, $\alpha^*$ coincides with the optimal average-rewards $\alpha^* = \overline{\alpha}(\PiH,\KH) = \overline{\alpha}(\PiS,\KH) = \overline{\alpha}(\PiS,\KS)$, independent of the initial distribution (cf. Section \ref{subsec:bellmanOpt}). In particular, stationary controller policies are optimal for $\overline{\alpha}(\PiH,\KH)$. However, in general, $\overline{\alpha}(\PiH,\KS)\neq \alpha^*$; (cf. Section \ref{sec:HD_S}). 
    \item Theorem \ref{thm:policy_opt_gap}, together with Remark \ref{rmk:policy_from_bellman}, certifies the optimality of the policy derived from any solution of the robust Bellman equation.

    \item Theorem \ref{thm:wc_ctrl} shows that if the controller is weakly communicating (as in Definition \ref{def:wc}) and $\cQ$ and $\cP_s$ are \textit{compact} for all $s \in S$, then \eqref{eqn:r_bellman_eqn_const_gain0} has a solution.
    \item Theorem \ref{thm:wc_adv} and \ref{thm:suff_cond_for_swap} together imply that if the adversary is weakly communicating and $\cQ$ and $\cP_s$ are \textit{convex and compact} for all $s \in S$, then the  Bellman equation \eqref{eqn:r_bellman_eqn_const_gain0} has a solution.

    \item Section \ref{sec:HD_S} shows that if both the controller and the adversary are communicating and compact (not necessarily convex), and $\mrm{K} = \KS$, then a stationary policy is optimal for the controller if and only if $\alpha' = \alpha^*$. Here, $\alpha'$ denotes the solution to \eqref{eqn:inf_sup_eqn_const_gain}, obtained by swapping the sup-inf order in \eqref{eqn:r_bellman_eqn_const_gain0}.
\end{itemize}

\subsection{Literature review}
\textbf{Robust MDPs:} While the Bellman optimality of discounted-reward robust MDPs has been extensively studied \citep{iyengar2005robust,wiesemann2013robust,xu2010distributionally,wang2023foundationRMDP,grand-clement2024tractable}, the corresponding results for the average-reward setting remain underexplored. To the best of our knowledge, \citet{wang2023avg_unichain_dp} provides the first results under strong assumptions of SA-rectangularity and uniform unichains. \citet{grand2023beyond} focus on Blackwell optimality, showing that $\epsilon$-Blackwell optimal policies always exist under SA-rectangularity. Moreover, they demonstrate that in S-rectangular RMDPs, average-reward optimal policies may fail to exist; and even when they do exist, they may need to be strictly history-dependent.

\textbf{Stochastic games (SGs):} S-rectangular robust MDPs can be viewed as a generalization of two-player zero-sum SGs. They extend the standard SGs framework in the following ways: (i) an asymmetry of information, where the controller may use history-dependent policies while the adversary is restricted to stationary or Markovian ones, and (ii) ambiguity-set constraints, where the adversary’s feasible set may be infinite and nonconvex, in contrast to the convex mixed-strategy sets of SGs.   Below, we provide a detailed review of what is known in the stochastic game literature. The properties of Blackwell $\varepsilon$-optimal strategies are subsequently studied in \citet{grand2025playing}.

\citet{iwase1976markov} show that stochastic games have a value and that both players have optimal stationary policies when the Bellman equation admits a solution, which establishes a result similar to our Theorem 1. \citet{mertens1981stochastic} establishes that $\epsilon$-optimal strategies always exist for all players, implying that every zero-sum stochastic game admits a value in the finite-state and finite-action setting. In the special case of irreducible stochastic games, Section~5 of \citet{filar2012competitive} shows that both players possess optimal stationary strategies, that the optimal value is state-independent, and that the Bellman equation admits a solution equal to this common value. A concise overview of these foundational results can also be found in the tutorial by \citet{renault2019tutorial}. To further weaken these assumptions, \citet{wei2017online} shows that when the SG is player~1  communicating, the Bellman equation still admits a solution that characterizes the value of the game. The player~1 communication condition is the SG analogue of the controller-communication assumption in our paper. However, as in the literature on stochastic games, their players' policies are randomized, making the decision sets convex.  
For continuous-state or continuous-action settings, analogous results have been derived under the geometric ergodicity assumption \citep{jaskiewicz2006zero, jaskiewicz2009zero, jaskiewicz2010continuous}. 

Finally, \citet{garrec2019communicating} study communicating zero-sum product stochastic games, where each player has an individual state that evolves solely according to their own previous state and action. However, their notion of communication differs from the one considered in our formulation. In fact, their setting does not satisfy the (weakly) communicating assumption used in this paper. As shown by \citet{garrec2019communicating} as well as \citet{vigeral2013zero, sorin2015reversibility, ziliotto2016zero}, the value of average-reward stochastic games may fail to exist without compactness or communicating assumptions when the action spaces are infinite. Recently, \citet{gaubert2025thresholds} study Blackwell optimality in stochastic games.




\subsection{Comments on Paper Organization}
In the sections that follow, we set the stage and present the paper’s main results. To guide the reader, we provide a brief roadmap and highlight the technical flow.

Section \ref{sec:canonical_construction} gives a rigorous, self-contained formulation of the controller–adversary dynamics and the optimal control objective in a robust MDP. Section \ref{sec:bellman_opt} introduces the constant gain robust Bellman equation and, conditional on the existence of a solution, derives its implications for the optimal robust control problem. Section \ref{sec:exist_sol} provides sufficient conditions for the existence of solutions to the robust Bellman equation, focusing on one-sided (weak) communication-type structures that arise naturally in classical MDPs. Finally, Section \ref{sec:HD_S} analyzes a special case where information asymmetry forces the optimal robust control value to equal the value of the robust Bellman equation with the $\sup$ and $\inf$ interchanged; in this regime, no stationary policy can be near-optimal unless the original and exchanged equations have the same constant gain.

The main theorems are organized as follows. Theorems \ref{thm:general_dpp}, \ref{thm:interchange_imply_dpp}, and \ref{thm:policy_opt_gap} establish consequences \textit{assuming} that the constant gain robust Bellman equation admits a solution. Theorems \ref{thm:existence_of_sol}, \ref{thm:wc_ctrl}, \ref{thm:wc_adv}, and \ref{thm:suff_cond_for_swap}, together with their corollaries, give sufficient conditions on the controller’s and adversary’s decision sets that \textit{guarantee the existence} of a solution. Finally, Theorem \ref{thm:value_HD_S} treats an asymmetric-information setting in which, even when a solution exists, the optimal robust control value need not coincide with that solution.

\section{Canonical Construction and the Optimal Robust Control Problem}\label{sec:canonical_construction}

In this section, we first present a brief but self-contained canonical construction of the probability space, the processes of interest, and the controller’s and adversary’s policy classes. The construction closely follows \citet{wang2023foundationRMDP}, to which we refer the reader for additional details. 

Let $S,A$ be finite state and action spaces, each equipped with the discrete Borel $\sigma$-fields $\cS$ and $\cA$, respectively. Define the underlying measurable space $(\Omega, \cF)$ with $\Omega = (S\times A)^{\Z_{\geq 0}}$ and $\cF$ the corresponding cylinder $\sigma$-field. The process $\{(X_t,A_t), t \geq 0\}$ is defined by point evaluation, i.e., $X_t(\omega) = s_t$ and $A_t(\omega) = a_t$ for all $t \geq 0$ and any $\omega = (s_0,a_0,s_1,a_1,\ldots) \in \Omega$.

The history set $\bd{H}_t$ at time $t$ contains all $t$-truncated sample paths $$\bd{H}_t := \set{h_t = (s_0,a_0,\ds,a_{t-1} ,s_t): \omega = (s_0,a_0,s_1\ds )\in\Omega}.$$
We also define the random element $H_t:\Omega\ra \bd{H}_t$ by $H_t(\omega) = h_t$, and the $\sigma$-field $\cH_t:=\sigma(H_t)$.

Given a prescribed subset $\cQ \subseteq \cP(\cA)$, a controller policy $\pi$ is a sequence of decision rules $\pi = (\pi_0,\pi_1,\pi_2,\ldots)$ where each $\pi_t$ is a measure-valued function $\pi_t:\bd{H}_t \ra \cQ$, represented in conditional distribution form as $\pi_t(a|h_t) \in [0,1]$ with $\sum_{a \in A}\pi_t(a|h_t) = 1$. The history-dependent controller policy class is therefore $$\PiH(\cQ) := \{ \pi = (\pi_0,\pi_1,\ldots) : \pi_t \in \{\bd{H}_t \ra \cQ\}, \ \forall t \geq 0 \}.$$

A controller policy $\pi = (\pi_0,\pi_1,\ldots)$ is stationary if for any $t_1,t_2 \geq 0$ and $h_{t_1} \in \bd{H}_{t_1}, h'_{t_2} \in \bd{H}_{t_2}$ such that $s_{t_1} = s'_{t_2}$, we have $\pi_{t_1}(\cdot|h_{t_1}) = \pi_{t_2}(\cdot|h'_{t_2})$. In particular, this means $\pi_t(a|h_{t}) = \Delta(a|s_t)$ where $h_t =(s_0,a_0,\ds, s_t) $ for some $\Delta:S \to \cQ$ for all $t \geq 0$. Thus, a stationary controller policy can be identified with $\Delta:S \to \cQ$, i.e., $\pi = (\Delta,\Delta,\ldots)$. Accordingly, the stationary policy class for the controller is $$\PiS(\cQ) := \{ (\Delta,\Delta,\ldots) : \Delta \in \{S \to \cQ\} \},$$ which is identified with $\{S \to \cQ\}$.

On the adversary side, for each $s \in S$ we fix a prescribed set of measure-valued functions $\cP_s \subseteq \{A \to \cP(\cS)\}$. The product set $\cP := \bigtimes_{s \in S} \cP_s$ is called an S-rectangular ambiguity set.

Given $\{\cP_s:s \in S\}$, a history-dependent $S$-rectangular adversary policy $\kappa$ is a sequence of adversarial decision rules $\kappa = (\kappa_0,\kappa_1,\kappa_2,\ldots)$. Each decision rule $\kappa_t$ specifies the conditional distribution of the next state given a history $h_t\in\bd{H}_t$ and an action $a\in A$, i.e., $\kappa_t(s'|h_t,a) \in [0,1]$ with $\sum_{s' \in S}\kappa_t(s'|h_t,a) = 1$. The history-dependent adversary policy class is $$\KH(\cP) := \{ \kappa = (\kappa_0,\kappa_1,\ldots) : \kappa_t(\cdot|h_t,\cdot) \in \cP_{s_t}, \ \text{where } h_t = (s_0,a_0,\ldots,s_t), \ \forall t \geq 0 \}.$$

Analogous to the controller side, a stationary adversary policy $\kappa = (\kappa_0,\kappa_1,\ldots)$ can be identified with $p \in \cP$, i.e., $\kappa = (p,p,\ldots)$ with $\kappa_t(s'|h_t,a) = p(s'|s_t,a)$ where $h_t = (s_0,a_0,\ldots,s_t)$. Thus, the stationary adversary policy class is $\KS(\cP) := \{ (p,p,\ldots): p \in \cP \}$, which can be identified directly with $\cP$.

As shown in \citet{wang2023foundationRMDP}, for $\Pi = \PiH(\cQ)$ or $\PiS(\cQ)$ and $\mrm K = \KH(\cP)$ or $\KS(\cP)$ the triple $\mu\in\cP(\cS),\pi\in\Pi,\kappa\in\mrm K$ uniquely defines a probability measure $P_\mu^{\pi,\kappa}$ on $(\Omega,\cF)$. The expectation under $P_\mu^{\pi,\kappa}$ is denoted by $E_\mu^{\pi,\kappa}$.


This paper considers the optimal robust control of the upper and lower long-run average rewards associated with a robust MDP instance $(\cQ,\cP,r)$ defined by
$$\overline{\alpha}(\mu,\Pi,\mrm{K}):= \sup_{\pi\in\Pi}\inf_{\kappa\in\mrm{K}}\overline{\alpha}(\mu,\pi,\kappa)\;\text{ and }\; \underline{\alpha}(\mu,\Pi,\mrm{K}):= \sup_{\pi\in\Pi}\inf_{\kappa\in\mrm{K}}\underline{\alpha}(\mu,\pi,\kappa),$$ 
where
$$\overline{\alpha}(\mu,\pi,\kappa):= \limsup_{n\ra\infty}E_\mu^{\pi,\kappa}\frac{1}{n}\sum_{k=0}^{n-1} r(X_k,A_k)\;\text{ and }\; \underline{\alpha}(\mu,\pi,\kappa):= \liminf_{n\ra\infty}E_\mu^{\pi,\kappa}\frac{1}{n}\sum_{k=0}^{n-1} r(X_k,A_k).$$
Without loss of generality, we assume the reward function $r$ is bounded between 0 and 1.

\par The controller’s policy class is either $\PiH(\cQ)$ or $\PiS(\cQ)$, while the adversary’s policy class is either $\KH(\cP)$ or $\KS(\cP)$. For notational simplicity, we will suppress the dependence of $\Pi$ and $\mrm K$ on $\cQ$ and $\cP$ whenever it is clear from the context.

\section{Robust Bellman Equations and Optimality}\label{sec:bellman_opt}
In this section, we define the constant-gain robust Bellman equation and show that its solution determines the long-run average reward of the robust control problem. This also implies stationary optimality for the controller in the $\overline\alpha(\mu,\PiH,\KH)$ and $\underline\alpha(\mu,\PiH,\KH)$ case. The proofs of the results in this are deferred to Appendix \ref{sec:proof_bellman_eqn_opt}. 

\subsection{Robust Bellman Equation with a Constant Gain}
\begin{definition}
    $(u^*,\alpha^*)\in \set{S\ra \R}\times [0,1]$ is said to be a solution of the robust Bellman equation with a constant gain if
    \begin{equation}\label{eqn:r_bellman_eqn_const_gain}
        u^*(s) = \sup_{\phi\in \cQ}\inf_{p_s\in\cP_s}E_{\phi,p_s}[r(s,A_0) - \alpha^* + u^*(X_1)],\quad \forall s\in S.
    \end{equation}
    Here, the expectation is taken w.r.t. the measure $P_{\phi,p_s}(A_0 = a,X_1 = s') = \phi(a)p_{s,a}(s')$. We say that $(u',\alpha')\in \set{S\ra \R}\times [0,1]$ is a solution to the inf-sup equation with a constant gain if
    \begin{equation}\label{eqn:inf_sup_eqn_const_gain}
        u'(s) = \inf_{p_s\in\cP_s}\sup_{\phi\in \cQ} E_{\phi,p_s}[r(s,A_0) - \alpha' + u'(X_1)],\quad \forall s\in S.
    \end{equation}
\end{definition}

It is useful to introduce the following discounted robust Bellman equation for a discount factor $\gamma \in (0,1)$. These will serve as key theoretical tools in establishing the existence of solutions to the average-reward equation \eqref{eqn:r_bellman_eqn_const_gain}.

\begin{definition}
    We say that $v_\gamma^*:S\ra \R$ solves the $\gamma$-discounted robust Bellman equation if
    \begin{equation}\label{eqn:discounted_r_bellman_eqn}
        v_\gamma^*(s) = \sup_{\phi\in \cQ}\inf_{p_s\in\cP_s}E_{\phi,p_s}[r(s,A_0) +\gamma v_\gamma^* (X_1)],\quad \forall s\in S.
    \end{equation} 
    Similarly, $v'_\gamma :S\ra \R$ solves the $\gamma$-discounted inf-sup equation if
    \begin{equation}\label{eqn:discounted_inf_sup_eqn}
        v_\gamma'(s) = \inf_{p_s\in\cP_s}\sup_{\phi\in \cQ} E_{\phi,p_s}[r(s,A_0) + \gamma v_\gamma '(X_1)],\quad \forall s\in S.
    \end{equation}
\end{definition}
\begin{remark}
    In the discounted setting, existence and uniqueness of solutions to \eqref{eqn:discounted_r_bellman_eqn} and \eqref{eqn:discounted_inf_sup_eqn} follow from a standard contraction mapping argument; see \citet{wang2023foundationRMDP}. By contrast, in the average-reward setting, existing results establish the existence of a solution to \eqref{eqn:r_bellman_eqn_const_gain} only in SA-rectangular settings and under additional assumptions \citep[Theorem~8]{wang2023avg_unichain_dp}. We substantially generalize these existence results to one–sided weak communication settings, which are robust analogues of the weakly communicating structures commonly assumed in classical MDPs to ensure constant-gain optimality \citep{puterman2014MDP}.

\end{remark}

\subsection{Bellman Optimality}
\label{subsec:bellmanOpt}

We show that a solution to \eqref{eqn:r_bellman_eqn_const_gain}, if exists, will characterize the optimal robust value. 

\begin{theorem}\label{thm:general_dpp}
    If $(u^*,\alpha^*)$ solves \eqref{eqn:r_bellman_eqn_const_gain}, then 
    \begin{equation}\label{eqn:general_dpp}
    \begin{aligned}
    \alpha^*
    &=\overline\alpha(\mu,\PiH,\KH) = \underline\alpha(\mu,\PiH,\KH) \\
    &=\overline\alpha(\mu,\PiS,\KH) = \underline\alpha(\mu,\PiS,\KH) =\overline\alpha(\mu,\PiS,\KS) = \underline\alpha(\mu,\PiS,\KS) \\
    \end{aligned}   
    \end{equation}
    for all $\mu\in\cP(\cS)$. Moreover, any other solution $(u,\alpha)$ to \eqref{eqn:r_bellman_eqn_const_gain} satisfies $\alpha = \alpha^*$. 
\end{theorem}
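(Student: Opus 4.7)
The plan is to establish two-sided bounds on each of the six long-run average-reward quantities via a telescoping (approximate-martingale) argument driven by the bias function $u^*$. Define $M_t := u^*(X_t) + \sum_{k=0}^{t-1}[r(X_k,A_k) - \alpha^*]$. Since $H_t$ determines $X_t = s_t$, and the conditional law of $(A_t,X_{t+1})$ given $H_t$ is $P_{\phi_t,p^\kappa_{s_t}}$ with $\phi_t := \pi_t(\cdot|h_t)\in\cQ$ and $p^\kappa_{s_t} := \kappa_t(\cdot|h_t,\cdot)\in\cP_{s_t}$, a direct computation gives
$$E_\mu^{\pi,\kappa}[M_{t+1}-M_t \mid H_t] = E_{\phi_t,p^\kappa_{s_t}}\!\left[r(s_t,A_0) - \alpha^* + u^*(X_1)\right] - u^*(s_t),$$
which \eqref{eqn:r_bellman_eqn_const_gain} controls from either side depending on how $\kappa$, respectively $\pi$, is chosen.

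For the upper bound, fix $\pi\in\PiH$ and $\epsilon>0$. Because $\sup_\phi\inf_{p_s}E_{\phi,p_s}[r(s,A_0)-\alpha^*+u^*(X_1)]=u^*(s)$, the inner infimum at $\phi_t$ is $\le u^*(s_t)$, so by pointwise selection on the countable discrete space $\bd H_t$ I pick $p^\pi_t(h_t)\in\cP_{s_t}$ with $E_{\phi_t,p^\pi_t(h_t)}[r(s_t,A_0)-\alpha^*+u^*(X_1)] \le u^*(s_t)+\epsilon$. The resulting $\kappa^\pi\in\KH$ satisfies $E[M_{t+1}-M_t\mid H_t]\le \epsilon$, hence $E^{\pi,\kappa^\pi}_\mu M_n \le E^{\pi,\kappa^\pi}_\mu M_0 + n\epsilon$; since $\|u^*\|_\infty<\infty$ (as $S$ is finite), dividing by $n$ and sending $n\to\infty$, then $\epsilon\downarrow 0$, yields $\overline\alpha(\mu,\PiH,\KH)\le \alpha^*$. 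If instead $\pi\in\PiS$ is stationary, selections $p^\pi_s\in\cP_s$ depend only on $s$ and produce a stationary $\kappa^\pi\in\KS$, giving the analogous bound $\overline\alpha(\mu,\PiS,\KS)\le\alpha^*$.

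For the lower bound, exploiting $|S|<\infty$, I pick for each $s$ an $\epsilon$-maximizer $\phi^*_s\in\cQ$ of $\phi\mapsto \inf_{p_s\in\cP_s}E_{\phi,p_s}[r(s,A_0)-\alpha^*+u^*(X_1)]$ and let $\pi^*\in\PiS$ be the associated stationary controller. The same conditional identity together with the inner infimum gives $E[M_{t+1}-M_t\mid H_t]\ge -\epsilon$ for every $\kappa\in\KH$, hence $\underline\alpha(\mu,\pi^*,\kappa)\ge \alpha^*-\epsilon$ uniformly in $\kappa$; so $\underline\alpha(\mu,\PiS,\KH)\ge \alpha^*$. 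Combining $\underline\alpha\le\overline\alpha$ with the monotonicities $\underline\alpha(\mu,\PiS,\KH)\le \min\{\underline\alpha(\mu,\PiS,\KS),\,\underline\alpha(\mu,\PiH,\KH)\}$ and $\overline\alpha(\mu,\PiS,\KH)\le\overline\alpha(\mu,\PiH,\KH)\le\alpha^*$ (all immediate from $\PiS\subseteq\PiH$ and $\KS\subseteq\KH$), together with the separately proved $\overline\alpha(\mu,\PiS,\KS)\le\alpha^*$, collapses all six values in \eqref{eqn:general_dpp} to $\alpha^*$. Uniqueness is immediate: any solution $(u,\alpha)$ yields $\alpha=\overline\alpha(\mu,\PiH,\KH)=\alpha^*$ by the same argument.

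The main technical obstacle is the $\epsilon$-optimal selection of the adversary response $\kappa^\pi$ against a history-dependent controller: since $\pi_t(\cdot|h_t)$ may range over the possibly continuous set $\cQ$, one in principle needs a measurable selector $h_t\mapsto p^\pi_t(h_t)\in\cP_{s_t}$. Because $\bd H_t$ is countable and discrete (finite $S$ and $A$), pointwise selection via the axiom of choice suffices and measurability is automatic; nevertheless this subtlety must be flagged so that $\kappa^\pi$ is certified as a legitimate element of $\KH$. The stationary-controller construction avoids the issue, as only $|S|$ selections are needed.
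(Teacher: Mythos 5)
Your proposal is correct and follows essentially the same route as the paper's proof: an $\epsilon$-optimal stationary controller selection for the lower bound $\underline\alpha(\mu,\PiS,\KH)\ge\alpha^*$, an $\epsilon$-optimal (history-dependent, or stationary when the controller is stationary) adversary response for the upper bounds $\overline\alpha(\mu,\PiH,\KH)\le\alpha^*$ and $\overline\alpha(\mu,\PiS,\KS)\le\alpha^*$, all driven by the same telescoping/martingale identity for $u^*$ (your $M_t$ is the paper's Proposition on $M_{u^*,n}^{\pi,\kappa}$ rearranged), and the same monotonicity sandwich to collapse all six values and deduce uniqueness of the gain.
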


In particular, the stationary policy class $\PiS$ attains the same optimal value as the fully history-dependent class when playing against a history-dependent adversary whose policies belong to $\KH$; that is,
$\overline{\alpha}(\mu,\PiH,\KH) = \overline{\alpha}(\mu,\PiS,\KH)$ and $\underline{\alpha}(\mu,\PiH,\KH) = \underline{\alpha}(\mu,\PiS,\KH)$. This certifies the optimality of the stationary Markov policies. 

\begin{remark}\label{rmk:left_out_HDS}
Note that the $\PiH$–$\KS$ case is intentionally excluded from Theorem \ref{thm:general_dpp}. This setting leads to a quite curious phenomenon, which we treat separately in Section \ref{sec:HD_S}. Interested readers may wish to proceed directly to that section.
\end{remark}

Next, we show that if a solution to \eqref{eqn:r_bellman_eqn_const_gain} also satisfies \eqref{eqn:inf_sup_eqn_const_gain}, then strong duality holds. In particular, if, in addition, the supremum and infimum are attained, the resulting policy pair constitutes a Nash equilibrium.

\begin{theorem}\label{thm:interchange_imply_dpp}
    If $(u^*,\alpha^*)$ solves \eqref{eqn:r_bellman_eqn_const_gain} and \eqref{eqn:inf_sup_eqn_const_gain}, then $$\sup_{\pi\in\Pi}\inf_{\kappa\in\mrm K}\overline\alpha(\mu,\pi,\kappa) = \inf_{\kappa\in\mrm K}\sup_{\pi\in\Pi}\overline\alpha(\mu,\pi,\kappa) = \sup_{\pi\in\Pi}\inf_{\kappa\in\mrm K}\underline\alpha(\mu,\pi,\kappa) = \inf_{\kappa\in\mrm K}\sup_{\pi\in\Pi}\underline\alpha(\mu,\pi,\kappa) = \alpha^*$$ for every combination of $\Pi = \PiH,\PiS$ and $\mrm K = \KH,\KS$ and any $\mu\in\cP(\cS)$. 
\end{theorem}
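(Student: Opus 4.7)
The plan is to bound each of the four quantities by $\alpha^*$ from above and below, using weak duality together with the two equations. One direction is essentially a corollary of Theorem \ref{thm:general_dpp}: for three of the four policy–class combinations, $(\PiH,\KH)$, $(\PiS,\KH)$, and $(\PiS,\KS)$, Theorem \ref{thm:general_dpp} directly gives $\sup_\pi\inf_\kappa\overline\alpha = \sup_\pi\inf_\kappa\underline\alpha = \alpha^*$, and weak duality upgrades this to $\inf_\kappa\sup_\pi(\cdot) \geq \alpha^*$ (for both $\overline\alpha$ and $\underline\alpha$ separately, and for any superset of policy classes). So everything reduces to proving the upper bound $\inf_\kappa\sup_\pi\overline\alpha(\mu,\pi,\kappa) \leq \alpha^*$ for the largest possible classes $\pi\in\PiH$ and $\kappa\in\KS$.

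The upper bound is where the inf-sup equation \eqref{eqn:inf_sup_eqn_const_gain} enters. Fix $\epsilon>0$. For each $s\in S$, choose $p^\epsilon_s\in\cP_s$ that achieves the infimum in \eqref{eqn:inf_sup_eqn_const_gain} at state $s$ to within $\epsilon$; by S-rectangularity of $\cP$, the profile $p^\epsilon := (p^\epsilon_s)_{s\in S}$ lies in $\cP$, so $\kappa^\epsilon := (p^\epsilon,p^\epsilon,\ldots)\in\KS$. The defining property of $p^\epsilon_s$ is that $\sup_{\phi\in\cQ}E_{\phi,p^\epsilon_s}[r(s,A_0)+u^*(X_1)]\leq u^*(s)+\alpha^*+\epsilon$, hence for \emph{any} $\phi\in\cQ$,
\[
E_{\phi,p^\epsilon_s}[r(s,A_0)+u^*(X_1)]\leq u^*(s)+\alpha^*+\epsilon.
\]
Applying this with $\phi=\pi_t(\cdot\mid H_t)$ and taking conditional expectations under $P_\mu^{\pi,\kappa^\epsilon}$ yields $E^{\pi,\kappa^\epsilon}_\mu[r(X_t,A_t)+u^*(X_{t+1})\mid \cH_t] \leq u^*(X_t)+\alpha^*+\epsilon$ for any $\pi\in\PiH$. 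Telescoping from $t=0$ to $n-1$, dividing by $n$, using boundedness of $u^*$ (finite $S$), and taking $\limsup$ gives $\overline\alpha(\mu,\pi,\kappa^\epsilon)\leq\alpha^*+\epsilon$, uniformly in $\pi\in\PiH$. Thus $\inf_{\kappa\in\KS}\sup_{\pi\in\PiH}\overline\alpha(\mu,\pi,\kappa)\leq\alpha^*+\epsilon$, and letting $\epsilon\downarrow 0$ closes the upper bound.

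To assemble the statement: for every combination of $\Pi\in\{\PiH,\PiS\}$ and $\mrm K\in\{\KH,\KS\}$, the upper bound above (via $\PiS\subseteq\PiH$ and $\KS\subseteq\KH$ which only makes the infimum smaller and the supremum larger in favorable directions) implies $\inf_\kappa\sup_\pi\overline\alpha\leq\alpha^*$. Since $\underline\alpha\leq\overline\alpha$ pointwise, the identical bound holds for $\underline\alpha$. Weak duality $\inf\sup\geq\sup\inf$ together with Theorem \ref{thm:general_dpp} then pins all four quantities at $\alpha^*$ for the three combinations it covers. The remaining case $(\PiH,\KS)$, which was intentionally excluded from Theorem \ref{thm:general_dpp} (cf.\ Remark \ref{rmk:left_out_HDS}), is the one point requiring care: here one sandwiches $\sup_{\PiH}\inf_{\KS}(\cdot)$ between $\sup_{\PiS}\inf_{\KS}(\cdot)=\alpha^*$ (from Theorem \ref{thm:general_dpp}) and $\inf_{\KS}\sup_{\PiH}(\cdot)\leq\alpha^*$ (from the telescoping step), forcing equality. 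The main technical subtlety in this argument is precisely this handling of $(\PiH,\KS)$: without the inf-sup equation hypothesis, the upper bound via the stationary adversary $\kappa^\epsilon$ is unavailable, which is exactly the phenomenon treated in Section \ref{sec:HD_S}; the hypothesis that the \emph{same} $(u^*,\alpha^*)$ solves both equations is what collapses this asymmetry.
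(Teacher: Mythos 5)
Your proposal is correct and follows essentially the same route as the paper: both construct an $\epsilon$-optimal stationary adversary $\kappa^\epsilon\in\KS$ from the inf-sup equation \eqref{eqn:inf_sup_eqn_const_gain} (valid by S-rectangularity), use a conditional-expectation/telescoping (martingale) argument to get $\overline\alpha(\mu,\pi,\kappa^\epsilon)\le\alpha^*+\epsilon$ uniformly over $\pi\in\PiH$, hence $\inf_{\kappa\in\KS}\sup_{\pi\in\PiH}\overline\alpha\le\alpha^*$, and then combine this with the lower bound coming from Theorem \ref{thm:general_dpp} (the paper uses its Step 1, \eqref{eqn:to_use_405}) and weak duality to pin all quantities, including the $(\PiH,\KS)$ case, at $\alpha^*$. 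The only difference is presentational: the paper sandwiches everything in a single chain between the smallest and largest of the relevant values, while you invoke Theorem \ref{thm:general_dpp}'s conclusion for three combinations and handle $(\PiH,\KS)$ by a separate sandwich.
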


\subsection{Optimality of Stationary Policies From the Robust Bellman Equation}
Similar to the classical MDP setting, given $(u^*, \alpha^*)$, any stationary policy that is $\epsilon$-optimal for the robust Bellman equation is also $\epsilon$-optimal for the long-run average reward of the robust MDP, for any $\epsilon \ge 0$.

\begin{theorem}\label{thm:policy_opt_gap}
    Let $(u^*,\alpha^*)$ be a solution to \eqref{eqn:r_bellman_eqn_const_gain}. If for some $\epsilon \geq  0$ and a stationary policy $\Delta:S\ra\cQ$, 
    $$u^*(s) \leq \inf_{p_s\in\cP_s}E_{\Delta(\cd|s),p_s} [r(s,A_0) -\alpha^* + u^*(X_1)]+\epsilon,\quad  \forall s\in S.$$
    Then, $\Delta$ is $\epsilon$-optimal among all stationary polices; i.e. $$\overline{\alpha}(\mu,\PiS,\mrm{K}) - \inf_{\kappa\in\mrm{K}}\underline{\alpha}(\mu,\Delta,\kappa)\leq \epsilon$$
    where $\mrm{K} = \KH$ or $\KS$. 
\end{theorem}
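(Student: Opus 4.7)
The plan is to use $(u^*,\alpha^*)$ as a sub-solution certificate along sample paths of $(\Delta,\kappa)$, and then combine the resulting lower bound on the worst-case average reward with the matching upper bound from Theorem \ref{thm:general_dpp}. More concretely, Theorem \ref{thm:general_dpp} already yields $\overline{\alpha}(\mu,\PiS,\mrm{K}) = \alpha^*$ for $\mrm{K} = \KH$ or $\KS$, so it suffices to show
$$\inf_{\kappa\in\mrm{K}}\underline{\alpha}(\mu,\Delta,\kappa) \;\geq\; \alpha^* - \epsilon.$$

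First, I would unfold one step of the dynamics under $P_\mu^{\Delta,\kappa}$ for an arbitrary $\kappa \in \KH$. Conditioning on $\cH_t$ with $X_t = s_t$, the conditional distribution of $(A_t,X_{t+1})$ is exactly $\Delta(\cdot|s_t)\otimes \kappa_t(\cdot|H_t,\cdot)$, and by construction $\kappa_t(\cdot|H_t,\cdot) \in \cP_{s_t}$. Plugging this particular $p_{s_t} = \kappa_t(\cdot|H_t,\cdot)$ into the hypothesis gives
$$E_\mu^{\Delta,\kappa}\bigl[\, r(X_t,A_t) + u^*(X_{t+1}) - u^*(X_t) \,\bigm|\, \cH_t \bigr] \;\geq\; \alpha^* - \epsilon.$$
Thus the process $M_n := \sum_{t=0}^{n-1}\bigl(r(X_t,A_t) - \alpha^* + \epsilon\bigr) + u^*(X_n)$ is a $P_\mu^{\Delta,\kappa}$-submartingale with respect to $(\cH_t)$.

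Next, I take expectations and telescope: for every $n\geq 1$,
$$E_\mu^{\Delta,\kappa}\!\left[\sum_{t=0}^{n-1} r(X_t,A_t)\right] \;\geq\; n(\alpha^* - \epsilon) + E_\mu^{\Delta,\kappa}[u^*(X_0)] - E_\mu^{\Delta,\kappa}[u^*(X_n)].$$
Because $S$ is finite, $u^*$ is bounded, so dividing by $n$ and sending $n\to\infty$ kills the boundary terms and yields $\underline{\alpha}(\mu,\Delta,\kappa) \geq \alpha^* - \epsilon$. Since this bound is uniform in $\kappa\in\KH$, it passes to the infimum, and the $\KS$ case is contained in it (since $\KS\subseteq \KH$ under the canonical identification). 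Combining with Theorem \ref{thm:general_dpp} gives the claimed gap bound.

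I do not expect a serious obstacle here; the only mildly delicate point is the measurability/conditioning step that turns the state-wise hypothesis into the submartingale inequality, namely verifying that $\kappa_t(\cdot|H_t,\cdot)$ is an admissible element of $\cP_{s_t}$ so that the inf-over-$\cP_s$ inequality can be invoked path-by-path. Once that identification is made, the argument is a one-step drift/telescoping computation, essentially the robust analogue of the classical MDP average-reward verification theorem; stationarity of $\Delta$ is what ensures the conditional law of $A_t$ depends on $H_t$ only through $X_t$, which is what makes the hypothesis (stated at the level of a single state $s$) applicable at every time $t$.
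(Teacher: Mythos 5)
Your proposal is correct and follows essentially the same route as the paper: the paper's proof simply invokes Step 1 of the proof of Theorem \ref{thm:general_dpp}, which is exactly your one-step drift inequality (obtained by plugging $\kappa_t(\cdot|H_t,\cdot)\in\cP_{s_t}$ into the sub-solution hypothesis) telescoped via the martingale of Proposition \ref{prop:mg}, giving $\inf_{\kappa\in\KH}\underline{\alpha}(\mu,\Delta,\kappa)\geq\alpha^*-\epsilon$, and then combines this with the equalities $\overline{\alpha}(\mu,\PiS,\KH)=\overline{\alpha}(\mu,\PiS,\KS)=\alpha^*$ from Theorem \ref{thm:general_dpp} and the inclusion $\KS\subseteq\KH$, just as you do.
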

\begin{remark}\label{rmk:policy_from_bellman}
    If we couple Theorem \ref{thm:policy_opt_gap} with Theorem \ref{thm:general_dpp} and \ref{thm:interchange_imply_dpp}, $\Delta$ is also $\epsilon$-optimal for the robust control problem with a history-dependent controller. Moreover, note that if $\epsilon = 0$, i.e. $\Delta(\cd|s)$ achieves the $\sup_{\phi\in\cQ}$ for all $s\in S$, then $\Delta$ is robust optimal. 
\end{remark}

\section{Existence of Solution} \label{sec:exist_sol}
The previous section showed that, when \textit{a solution exists}, the constant-gain robust Bellman equation \eqref{eqn:r_bellman_eqn_const_gain} characterizes the optimal value and a stationary policy. However, there is no reason to expect a constant-gain solution to exist in general, since even multichain Markov reward processes can exhibit state-dependent gain. 

In this section, we develop sufficient conditions--generalizing classical MDP counterparts and motivated by operations research application--that guarantee the existence of a solution.

\subsection{General Criteria}
We begin with a necessary and sufficient condition that links the constant-gain average-reward equation to the discounted version. This result serves as a cornerstone for the subsequent developments.

\begin{theorem}\label{thm:existence_of_sol}
    Given arbitrary $\cQ\subseteq\cP(\cA)$ and $\set{\cP_s\subseteq \set{A\ra \cP(\cS)}:s\in S}$, the following statements are equivalent: 
    \begin{enumerate}[label = {(\arabic*)}]
        \item 
        The solutions $\set{v^*_\gamma:\gamma\in(0,1)}$ to the $\gamma$-discounted equation \eqref{eqn:discounted_r_bellman_eqn} have uniformly bounded span; i.e. $$\sup_{\gamma\in(0,1)}\spnorm{v_\gamma^*}  = \sup_{\gamma\in(0,1)}\sqbk{\max_{s\in S}v_\gamma^*(s)- \min_{s\in S}v^*_\gamma(s)}< \infty.$$ \label{enum:discounted_bd_span}
        \item 
        The constant-gain average-reward robust Bellman equation \eqref{eqn:r_bellman_eqn_const_gain} has a solution $(u^*,\alpha^*)$. \label{enum:arbe_exist_sol}
    \end{enumerate}
\end{theorem}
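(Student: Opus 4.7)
The plan is to prove the two implications by complementary techniques. For (2) $\Rightarrow$ (1), I will show that a vertical shift of $u^*$ serves as an approximate fixed point of the $\gamma$-discounted Bellman operator, so the Banach contraction bound pins $v_\gamma^*$ within $O(1)$ of this shift, independently of $\gamma$. For (1) $\Rightarrow$ (2), I will use the classical vanishing-discount method: extract a convergent subsequence of the centered value functions $h_\gamma := v_\gamma^* - v_\gamma^*(s_0)$ and of $(1-\gamma) v_\gamma^*(s_0)$, then pass to the limit inside the discounted robust Bellman equation.

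For the forward direction, given $(u^*,\alpha^*)$ solving \eqref{eqn:r_bellman_eqn_const_gain}, set $V_\gamma(s) := u^*(s) + \alpha^*/(1-\gamma)$, and let $T_\gamma v(s) := \sup_{\phi\in\cQ}\inf_{p_s\in\cP_s} E_{\phi,p_s}[r(s,A_0) + \gamma v(X_1)]$ denote the discounted robust Bellman operator. A direct computation combined with \eqref{eqn:r_bellman_eqn_const_gain} yields
\begin{equation*}
T_\gamma V_\gamma(s) - V_\gamma(s) = \sup_\phi\inf_{p_s} E_{\phi,p_s}[r(s,A_0) + \gamma u^*(X_1)] - \sup_\phi\inf_{p_s} E_{\phi,p_s}[r(s,A_0) + u^*(X_1)],
\end{equation*}
and the elementary inequality $|\sup\inf f - \sup\inf g| \leq \|f-g\|_\infty$ gives $\|T_\gamma V_\gamma - V_\gamma\|_\infty \leq (1-\gamma)\|u^*\|_\infty$. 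Since $T_\gamma$ is a $\gamma$-contraction in sup-norm with unique fixed point $v_\gamma^*$ (cf.\ \citet{wang2023foundationRMDP}), the standard $\varepsilon$-fixed-point bound $\|v - v_\gamma^*\|_\infty \leq \|T_\gamma v - v\|_\infty/(1-\gamma)$ produces $\|v_\gamma^* - V_\gamma\|_\infty \leq \|u^*\|_\infty$. Taking the span cancels the additive constant $\alpha^*/(1-\gamma)$ and delivers $\spnorm{v_\gamma^*} \leq \spnorm{u^*} + 2\|u^*\|_\infty$, which is uniform in $\gamma$.

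For the reverse direction, fix a reference state $s_0 \in S$ and set $h_\gamma := v_\gamma^* - v_\gamma^*(s_0)$. By hypothesis $\{h_\gamma\}$ is uniformly bounded, and $(1-\gamma) v_\gamma^*(s_0) \in [0,1]$ is bounded as well. Since $S$ is finite, Bolzano-Weierstrass yields a subsequence $\gamma_n\uparrow 1$ along which $h_{\gamma_n} \to u^*$ and $(1-\gamma_n) v_{\gamma_n}^*(s_0) \to \alpha^*$ for some $u^*:S\to\R$ and $\alpha^*\in[0,1]$. Rewriting \eqref{eqn:discounted_r_bellman_eqn} by subtracting $v_\gamma^*(s_0)$ on both sides gives
\begin{equation*}
h_\gamma(s) + (1-\gamma) v_\gamma^*(s_0) = \sup_\phi \inf_{p_s} E_{\phi,p_s}[r(s,A_0) + \gamma h_\gamma(X_1)].
\end{equation*}
Sending $\gamma = \gamma_n \to 1$, the left-hand side tends to $u^*(s) + \alpha^*$, while the right-hand side tends to $\sup_\phi\inf_{p_s} E_{\phi,p_s}[r(s,A_0) + u^*(X_1)]$ because $\|\gamma_n h_{\gamma_n} - u^*\|_\infty \to 0$ and, once more, $|\sup\inf f - \sup\inf g| \leq \|f - g\|_\infty$. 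Rearranging recovers \eqref{eqn:r_bellman_eqn_const_gain}.

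The main obstacle is justifying the interchange of limit and sup-inf in the reverse direction, since Theorem \ref{thm:existence_of_sol} imposes no minimax, compactness, or convexity structure on $\cQ$ or $\cP_s$. The argument sidesteps this difficulty by relying on the Lipschitz-type bound $|\sup\inf f - \sup\inf g| \leq \|f - g\|_\infty$, which is available because the Bellman integrand depends linearly on the value function and $S$ is finite; pointwise convergence of $h_{\gamma_n}$ on $S$ therefore upgrades for free to uniform convergence of the sup-inf value. The same inequality is what drives the quantitative estimate in the forward direction, so both implications ultimately hinge on this single robust-to-interchange contraction-style inequality.
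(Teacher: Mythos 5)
Your proposal is correct, and it reaches both implications by a route that partially departs from the paper's. For (1)$\Rightarrow$(2) you follow the same vanishing-discount scheme (center at a reference state, extract a subsequential limit of $(v_\gamma^*-v_\gamma^*(s_0),\,(1-\gamma)v_\gamma^*(s_0))$), but you justify passing to the limit inside the sup-inf via the non-expansiveness bound $\abs{\sup_\phi\inf_{p_s}f-\sup_\phi\inf_{p_s}g}\le\norminf{f-g}$, whereas the paper replaces $\cP_s$ by its closure and invokes Berge's maximum theorem twice to get joint continuity of the sup-inf value; your argument is more elementary and makes transparent why no compactness or convexity of $\cQ,\cP_s$ is needed. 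For (2)$\Rightarrow$(1) the two arguments genuinely differ: the paper normalizes $u^*\ge 0$, builds the sub/supersolutions $\underline v_\gamma=\alpha^*/(1-\gamma)+u^*-\spnorm{u^*}$ and $\overline v_\gamma=\alpha^*/(1-\gamma)+u^*$, and sandwiches $v_\gamma^*$ using monotonicity plus contraction of the discounted operator, obtaining $\spnorm{v_\gamma^*}\le 2\spnorm{u^*}$; you instead observe that $V_\gamma=u^*+\alpha^*/(1-\gamma)$ is an approximate fixed point with residual at most $(1-\gamma)\norminf{u^*}$ and apply the standard estimate $\norminf{v-v_\gamma^*}\le\norminf{\cT_\gamma v-v}/(1-\gamma)$, giving $\spnorm{v_\gamma^*}\le\spnorm{u^*}+2\norminf{u^*}$. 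Your constant is slightly weaker (and, unlike the paper's, not shift-invariant unless you also normalize $u^*$), but the bound is still uniform in $\gamma$, which is all the theorem requires; both routes use only the sup-norm contraction property of $\cT_\gamma$ from \citet{wang2023foundationRMDP}, and yours dispenses with the monotonicity-plus-induction step entirely.
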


\subsection{Weakly Communicating Structures}

In this section, we establish that under compactness and the one-sided weakly communicating structures in Definition \ref{def:wc}, both the robust Bellman equation \eqref{eqn:r_bellman_eqn_const_gain} and the inf–sup equation \eqref{eqn:inf_sup_eqn_const_gain} admit solutions. These one-sided weak-communication assumptions are well motivated: they are extensively used in the classical MDP literature to guarantee a constant optimal gain \citep{puterman2014MDP}. In particular, as reviewed earlier, one-sided communication ensures the existence of solutions in SG settings where both players may employ fully randomized strategies \citep{wei2017online}, which in turn induces convex controller policy and adversary ambiguity sets. However, to the best of our knowledge, extensions to the non-convex or the weak-communication setting have not been established in the literature.

Moreover, the robust MDP landscape is more intricate, as applications sometimes necessitate non-convex decision sets for one or both players. We show that, under one-sided weak communication and compactness—without requiring convexity—the constant-gain robust Bellman equation \eqref{eqn:r_bellman_eqn_const_gain} or its inf–sup counterpart \eqref{eqn:inf_sup_eqn_const_gain} admits a solution, with the relevant equation determined by which side satisfies the weak-communication assumption.

We proceed by introducing the following notation. For $p\in \cP$ and $\Delta:S\ra\cP(\cA)$, denote
$$p_\Delta(s'|s):= \sum_{a\in A} p(s'|s,a)\Delta(a|s).$$ Also, let $p_{\Delta}^n(s'|s)$ be the $(s,s')$ entry of the n'th power of the matrix $\set{p_\Delta(s'|s):s,s'\in S}$. Moreover, for $C\subseteq S$ we denote the complement of $C$ is $S$ by $C^c:= S\setminus C$. 

\begin{definition}[Weak Communication]\label{def:wc}
Consider arbitrary controller and adversary action sets $\cQ\subseteq\cP(\cA)$ and $\cP=\bigtimes_{s\in S}\cP_s$, with $\cP_s\subseteq\set{A\ra\cP(\cS)}$.

\begin{itemize}
\item A stationary controller policy $\Delta:S\ra\cQ$ is said to be weakly communicating if there is a communicating class $C_\Delta\subseteq S$ s.t. for any $s,s'\in C_\Delta$, there exists $p\in\cP$ and $N\geq 1$ s.t. $p_\Delta^N(s'|s) > 0$. Moreover, for all $s\in C_\Delta^c$, $s$ is transient under any stationary adversarial policy. 
     
The controller is weakly communicating if every stationary policy $\Delta:S\ra\cQ$ is weakly communicating. 
     
\item A stationary adversary policy $p\in\cP$ is said to be weakly communicating if there is a communicating class $C_p\subseteq S$ s.t. for any $s,s'\in C_p$, there exists $\Delta:S\ra\cQ$ and $N\geq 1$ s.t. $p_\Delta^N(s'|s) > 0$. Moreover, for all $s\in C_p^c$, $s$ is transient under any stationary controller policy. 
     
The adversary is weakly communicating if every stationary policy $p \in \cP$ is weakly communicating. 
\end{itemize}
\end{definition}

\begin{remark}Note that our weak communication definitions parallel their classical MDP counterpart. Moreover, a controller/adversary may be weakly communicating even when the communicating sets $C_\Delta$/$C_p$ depend on the particular stationary policy; they need not coincide across stationary policies.
\end{remark}

Communicating controller and adversary are defined analogously as follows.

\begin{definition}[Communication]\label{def:comm} Consider arbitrary controller and adversary action sets $\cQ\subseteq\cP(\cA)$ and $\cP=\bigtimes_{s\in S}\cP_s$, with $\cP_s\subseteq\set{A\ra\cP(\cS)}$.
    \begin{itemize}
    \item A stationary controller policy $\Delta:S\ra\cQ$ is communicating if it is weakly communicating with $C_\Delta = S$. The controller is communicating if every $\Delta:S\ra\cQ$ is communicating. 

    \item A stationary adversary policy $p\in\cP$ is communicating if it is weakly communicating with $C_p = S$. The adversary is communicating if every $p\in\cP$ is communicating. 
\end{itemize}
\end{definition}

With these definitions, we are ready to state the main results of this section. 

\begin{theorem}[Controller-Side Structures]\label{thm:wc_ctrl}
Assume either of the two assumptions holds: 
\begin{enumerate}[label = {\textrm(\arabic*)}]
    \item The controller is weakly communicating and $\cQ$ and $\cP_s$ are compact for all $s\in S$. 
    \item The controller is communicating and $\cQ$ is compact.
\end{enumerate}
Then the constant gain average reward robust Bellman equation \eqref{eqn:r_bellman_eqn_const_gain} has a solution.
\end{theorem}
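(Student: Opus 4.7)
The plan is to invoke Theorem~\ref{thm:existence_of_sol} and reduce the existence question to establishing the discount-uniform span bound $\sup_{\gamma \in (0,1)} \spnorm{v_\gamma^*} < \infty$. I argue by contradiction: suppose the span diverges along a sequence $\gamma_n \uparrow 1$. By compactness of $\cQ$, the supremum in \eqref{eqn:discounted_r_bellman_eqn} is attained pointwise, yielding an optimal stationary controller $\Delta^*_{\gamma_n} \in \cQ^S$ with $v_{\gamma_n}^*(s) = \inf_{p\in\cP} V_{\gamma_n}^{\Delta^*_{\gamma_n}, p}(s)$. Compactness of $\cQ^S$ then permits extracting a convergent subsequence $\Delta^*_{\gamma_n} \to \Delta^*$, and the weak-communication hypothesis applied to $\Delta^*$ fixes a reference communicating class $C = C_{\Delta^*} \subseteq S$.

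Within $C$, weak communication of $\Delta^*$ provides, for each $s, s' \in C$, an adversary kernel $p^{s, s'} \in \cP$ and $N^{s, s'} \geq 1$ with $(p^{s, s'}_{\Delta^*})^{N^{s, s'}}(s'|s) > 0$. Since $\Delta \mapsto p_\Delta$ is continuous, $(p^{s, s'}_{\Delta^*_{\gamma_n}})^{N^{s, s'}}(s'|s) \geq \bar\epsilon > 0$ holds uniformly for $n$ large. A standard geometric-trial (restart) argument, exploiting the finiteness of $S$, then constructs a history-dependent adversary strategy whose expected hitting time of $s'$ from $s$ is uniformly bounded by some $T^* < \infty$ under $(\Delta^*_{\gamma_n}, \cdot)$. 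Combining this ``reach $s'$ then switch to an $\epsilon$-optimal continuation'' adversary strategy with the classical discounted DPP (which equates $v_{\gamma_n}^*(s)$ with the infimum over history-dependent adversaries $V_{\gamma_n}^{\Delta^*_{\gamma_n},\kappa}(s)$) yields
\[
    v_{\gamma_n}^*(s) \;\leq\; E\sqbk{\sum_{t=0}^{\tau-1} \gamma_n^t \, r(X_t, A_t) + \gamma_n^\tau v_{\gamma_n}^*(s')} + \epsilon \;\leq\; T^* + v_{\gamma_n}^*(s') + \epsilon,
\]
using $r \leq 1$ and $\gamma_n^\tau \leq 1$. Taking $\epsilon \downarrow 0$ and exchanging $s, s'$ gives a uniform bound on the span of $v_{\gamma_n}^*$ restricted to $C$, which resolves case~(2) directly since $C = S$ for every stationary policy.

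For case~(1), it remains to bound $|v_{\gamma_n}^*(s) - v_{\gamma_n}^*(s')|$ for $s \in C^c$ and $s' \in C$. The hypothesis that $C^c$ is transient under every stationary adversary for $\Delta^*$, combined with compactness of $\cP_s$ and continuity of the spectral radius of the restricted sub-matrix $p_{\Delta^*}|_{C^c \times C^c}$ in $p$, forces $\sup_{p \in \cP} \rho\crbk{p_{\Delta^*}|_{C^c \times C^c}} \leq 1 - \delta$ for some $\delta > 0$. This yields a uniform bound on $\sup_p E_{s, \Delta^*, p}[\tau_C]$ via the resolvent $(I - p_{\Delta^*}|_{C^c \times C^c})^{-1}$, and continuity transfers a comparable bound to $\Delta^*_{\gamma_n}$ for $n$ large. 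Combined with the intra-$C$ bound, this closes the contradiction and verifies the hypothesis of Theorem~\ref{thm:existence_of_sol}.

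The hard part will be the uniformity argument in case~(1): the communicating classes $C_{\Delta^*_{\gamma_n}}$ need not coincide with or even approach $C = C_{\Delta^*}$ as subsets of $S$, so the proof must use $C$ as the \emph{reference} communicating set and control the perturbed transient behavior under $\Delta^*_{\gamma_n}$ through spectral-radius continuity together with compactness of $\cP_s$. This is precisely why case~(1) requires compactness of $\cP_s$ in addition to that of $\cQ$, whereas in case~(2) the absence of transient states renders the $\cP_s$-compactness unnecessary.
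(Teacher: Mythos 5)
Your proposal is correct in its overall architecture and lands on the same skeleton as the paper's proof: reduce via Theorem~\ref{thm:existence_of_sol} to a $\gamma$-uniform span bound, control the span inside a reference communicating class by a two-phase (``drive to a target state, then switch to a near-optimal continuation'') history-dependent adversary, and control the contribution of states outside that class through a uniform bound on $(I-M)^{-1}$ for the transient block, obtained from compactness of $\cP$ and continuity in the controller. Where you genuinely diverge is the uniformization device. The paper works with $\epsilon$-optimal stationary controllers $\Delta_\epsilon$ for each $\gamma$ and covers the compact policy space $\set{S\ra\cQ}$ by finitely many neighborhoods $G_{\Delta_k}=O_{\Delta_k}\cap K_{\Delta_k}$ (Lemmas \ref{lemma:wc_ctrl_exists_adv_lower_bd_prob}--\ref{lemma:construct_K_Delta}), so the reference class is $C_{\Delta_k}$ for the cover element containing $\Delta_\epsilon$ and all constants are uniform in $\gamma$ at once. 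You instead extract exact optimizers $\Delta^*_{\gamma_n}$ (legitimate: the objective is u.s.c.\ in $\phi$ on compact $\cQ$), pass to a subsequential limit $\Delta^*$, use $C_{\Delta^*}$ as the reference class, and transfer the probability and resolvent bounds to $\Delta^*_{\gamma_n}$ by continuity, closing with a contradiction. These are two faces of the same compactness argument; yours avoids building the cover but only yields bounds along the contradiction subsequence (which suffices), while the paper's gives explicit constants uniform over all $\gamma\in(0,1)$ and never needs exact attainment of the supremum.

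Two loose ends in your sketch are worth repairing. First, your restart argument for hitting $s'$ within $C=C_{\Delta^*}$ tacitly assumes the chain stays where weak communication applies; under Definition~\ref{def:wc}, $C_\Delta$ need not be closed, so a failed trial can leave the chain at some $w\in C^c$, where the weak-communication hypothesis provides no kernel toward $s'$. The paper handles exactly this in Lemma~\ref{lemma:wc_ctrl_implication} (Case 2): under any $p\in\cP$, recurrent classes of $p_\Delta$ lie inside $C_\Delta$, so transient states lead into $C_\Delta$ with positive probability, and S-rectangularity lets one patch kernels state by state (Lemma~\ref{lemma:wc_bd_E_hitting_time}). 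In your framework this is fixable—in case (1) the uniform $E[\tau_C]$ bound lets the adversary wait to re-enter $C$ before restarting, and in case (2) it is vacuous since $C=S$—but as written the step is missing. Second, a uniform bound $\sup_{p\in\cP}\rho\bigl(M_{\Delta^*,p}\bigr)\le 1-\delta$ does not by itself give a uniform bound on $e^\top(I-M_{\Delta,p})^{-1}e$ for nearby $\Delta$ (the constants in $\norm{M^n}\le c\,\rho^n$ are not controlled by the spectral radius alone); the clean route is the one the paper takes in Lemma~\ref{lemma:construct_K_Delta}, namely continuity of $(\eta,p)\mapsto(I-M_{\eta,p})^{-1}$ on a neighborhood of $\Delta^*$ times the compact set $\cP$ together with Berge's theorem, which your phrase ``continuity transfers a comparable bound'' should be read as invoking. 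With those two points made explicit, your argument goes through and correctly isolates why $\cP_s$-compactness is needed only in case (1).
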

The proof of Theorem \ref{thm:wc_ctrl} is provided within the main body of the paper in Section \ref{sec:proof_of_thm_wc_ctrl}. 

Symmetric to the controller-side results, we show that the adversary-side weak communication structures will imply the existence of solutions to \eqref{eqn:inf_sup_eqn_const_gain}. The proof for Theorem \ref{thm:wc_adv} is deferred to Appendix \ref{sec:proof_wc_adv} as it is similar to the proof of Theorem \ref{sec:proof_of_thm_wc_ctrl}.

\begin{theorem}[Adversary-Side Structures]\label{thm:wc_adv}
Assume either of the two assumptions holds: 
\begin{enumerate}[label = {\textrm(\arabic*)}]
    \item The adversary is weakly communicating and $\cQ$ and $\cP_s$ are compact for all $s\in S$. 
    \item The adversary is communicating and $\cP_s$ is compact for all $s\in S$.
\end{enumerate}
Then the constant gain average reward inf-sup equation \eqref{eqn:inf_sup_eqn_const_gain} has a solution.
\end{theorem}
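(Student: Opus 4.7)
The plan is to mirror the proof of Theorem \ref{thm:wc_ctrl} via a symmetry argument that swaps the roles of the controller and the adversary. The inf--sup equation \eqref{eqn:inf_sup_eqn_const_gain} and its discounted analog \eqref{eqn:discounted_inf_sup_eqn} are related by a sign flip to a sup--inf equation in a dual problem: setting $w = -v'_\gamma$ and $\tilde r = -r$, the identity
$$\inf_{p_s\in\cP_s}\sup_{\phi\in\cQ} E_{\phi,p_s}[r(s,A_0)+\gamma v'_\gamma(X_1)] = -\sup_{p_s\in\cP_s}\inf_{\phi\in\cQ} E_{\phi,p_s}[\tilde r(s,A_0) +\gamma w(X_1)]$$
shows that $v'_\gamma$ solves the discounted inf--sup equation iff $w$ solves the discounted sup--inf (robust Bellman) equation of a dual game in which the new controller commits to $p_s\in\cP_s$ and the new adversary responds with $\phi\in\cQ$. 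Under this swap, the original adversary's (weak) communication assumption of Definition \ref{def:wc} translates verbatim into the new controller's (weak) communication assumption, and compactness of $\cQ$ and of each $\cP_s$ translates to compactness of the new adversary's and new controller's decision sets, respectively.

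With this reduction in mind, the first step is to establish the symmetric analog of Theorem \ref{thm:existence_of_sol}: \eqref{eqn:inf_sup_eqn_const_gain} has a solution iff $\sup_{\gamma\in(0,1)}\spnorm{v'_\gamma}<\infty$. This should follow by the same vanishing-discount argument used for Theorem \ref{thm:existence_of_sol}. The inf--sup Bellman operator is a $\gamma$-contraction in sup-norm, so $v'_\gamma$ is well-defined; subtracting $v'_\gamma(s_0)\mathbf 1$ for a reference state $s_0$ yields a sequence of uniformly bounded span, from which a subsequential limit $(u',\alpha')$ with $\alpha' = \lim(1-\gamma)v'_\gamma(s_0)$ can be extracted and shown to satisfy \eqref{eqn:inf_sup_eqn_const_gain}. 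The second step invokes the swap above to reduce the uniform span bound for $\{v'_\gamma\}$ to the corresponding bound for the sup--inf discounted solutions in the dual game, and applies Theorem \ref{thm:wc_ctrl} to conclude that the latter span is uniformly bounded in $\gamma$. Combining the two steps yields the desired solution.

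The main obstacle is a framework mismatch in executing the swap: Theorem \ref{thm:wc_ctrl} is stated for a controller whose decision set $\cQ$ is a subset of $\cP(\cA)$ for a finite action set $\cA$, whereas after swapping the new controller's decision set is $\cP_s\subseteq\set{A\ra\cP(\cS)}$, which is generally not of that form and is additionally state-dependent. Reconciling this requires a routine but necessary generalization of Theorem \ref{thm:wc_ctrl} that allows the controller's decision set to be any (state-dependent) compact set parameterizing transition dynamics. Since the proof relies only on compactness and continuity machinery (Berge's maximum theorem for the discounted fixed point, Bolzano--Weierstrass for the vanishing-discount limit, and weak-communication-based hitting-time bounds for the span estimate), this extension should go through without essential change. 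A less elegant fallback, should the extension prove subtle, is to bypass the swap and directly mirror every step in the proof of Theorem \ref{thm:wc_ctrl} for the inf--sup equation under adversary-side weak communication, duplicating the technical arguments but avoiding any framework adjustment.
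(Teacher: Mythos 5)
Your fallback route is exactly the paper's proof: the paper establishes Theorem \ref{thm:wc_adv} by re-running the argument of Theorem \ref{thm:wc_ctrl} with the roles of the two players exchanged — adversary-side analogues of the implication lemma, the neighborhoods $O_p$, $K_p$, $G_p$ with a finite subcover of $\cP$ (using compactness of each $\cP_s$), uniform hitting-probability and hitting-time lemmas, a decomposition of $\spnorm{v'_\gamma}$ into two terms bounded via a two-phase \emph{history-dependent controller} and the transient-block resolvent bound, and finally the operator-level equivalence from Theorem \ref{thm:existence_of_sol} adapted to \eqref{eqn:discounted_inf_sup_eqn} and \eqref{eqn:inf_sup_eqn_const_gain}. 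One pleasant asymmetry the paper exploits, which your sketch glosses over, is that once the stationary adversary $p_\epsilon$ is fixed, the inner supremum is a classical MDP, so an exactly optimal stationary controller $\Delta_\epsilon$ exists by standard MDP theory; this simplifies the adversary-side argument relative to the controller-side one.

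Your preferred route — the sign-flip reduction to a ``dual'' sup--inf game and an appeal to (a generalization of) Theorem \ref{thm:wc_ctrl} — has a genuine gap that is larger than the ``routine generalization'' you describe. Theorem \ref{thm:wc_ctrl} and its proof are not purely operator-theoretic: they rest on the canonical construction with a \emph{finite} action set, controller decision sets $\cQ\subseteq\cP(\cA)$, and the foundational identifications from \citet{wang2023foundationRMDP} (the discounted fixed point equals the sup--inf game value, existence of $\epsilon$-optimal stationary controllers and adversaries), which are invoked repeatedly in the span-bound argument. In the dual game the ``controller'' chooses elements of the state-dependent, generally infinite and nonconvex sets $\cP_s$ and the ``adversary'' chooses $\phi\in\cQ$, so neither player's decision set has the form required by that framework; to make your reduction rigorous you would have to re-establish the canonical construction, the discounted value identification, and stationary $\epsilon$-optimality for robust MDPs with compact infinite action spaces — essentially rebuilding the probabilistic machinery you hoped to reuse. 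There is also the minor normalization issue that $\tilde r=-r$ leaves $[0,1]$ (fixable with $\tilde r=1-r$), and the within-period information order of the dual game must be matched to the inf--sup form when interpreting values dynamically. Since you flagged the mismatch and supplied the direct-mirroring fallback, your overall plan is salvageable, but as written the primary reduction does not go through by citation of Theorem \ref{thm:wc_ctrl}; the paper's choice to duplicate the argument in the primal game is precisely what avoids this foundational work.
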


Note that Theorem \ref{thm:wc_adv} does not, by itself, guarantee a solution to \eqref{eqn:r_bellman_eqn_const_gain}. Nevertheless, under suitable sufficient conditions, the solution $(u',\alpha')$ of \eqref{eqn:inf_sup_eqn_const_gain} also satisfies \eqref{eqn:r_bellman_eqn_const_gain}. We state these conditions formally in Theorem \ref{thm:suff_cond_for_swap}, where we let $\delta_a \in \mathcal{P}(\mathcal{A})$ denote the Dirac measure at $a \in \mathcal{A}$. The proof of Theorem \ref{thm:suff_cond_for_swap} is provided in Appendix \ref{sec:proof:thm:suff_cond_for_swap}.

\begin{theorem}\label{thm:suff_cond_for_swap}
Either of the following conditions is sufficient for a solution $(u',\alpha')$ of \eqref{eqn:inf_sup_eqn_const_gain} to also solve \eqref{eqn:r_bellman_eqn_const_gain}:
\begin{enumerate}[label={\textrm(\arabic*)}]
    \item $\cQ$ and each $\cP_s$ are convex for all $s\in S$, and either $\cQ$ is compact or all $\cP_s,s\in S$ are compact.
    \item $\set{\delta_a : a \in A} \subseteq \cQ$, i.e., the controller can take deterministic actions, and for every $s \in S$, the ambiguity set factorizes as $\cP_s = \bigtimes_{a \in A} \cP_{s,a}$ for some $\cP_{s,a} \subseteq \cP(S)$. In this case, we refer to $\cP$ as an SA-rectangular adversarial ambiguity set.
\end{enumerate}
\end{theorem}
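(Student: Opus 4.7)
The plan is to show, under either hypothesis, the pointwise interchange
$$\inf_{p_s\in\cP_s}\sup_{\phi\in\cQ} f_s(\phi, p_s) = \sup_{\phi\in\cQ}\inf_{p_s\in\cP_s} f_s(\phi, p_s), \quad \forall s \in S,$$
where $f_s(\phi, p_s) := \sum_{a\in A}\phi(a)\bigl[r(s,a) - \alpha' + \sum_{s'\in S} p_{s,a}(s') u'(s')\bigr]$. Once this is established at every state, the inf-sup equation \eqref{eqn:inf_sup_eqn_const_gain} satisfied by $(u', \alpha')$ immediately becomes the sup-inf equation \eqref{eqn:r_bellman_eqn_const_gain}, proving the theorem. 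A key structural observation is that $f_s$ is bilinear in $(\phi, p_s)$, hence continuous on the finite-dimensional ambient space and linear (and therefore simultaneously quasi-concave and quasi-convex) in each argument separately.

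For Case \textrm{(1)}, I would directly invoke Sion's minimax theorem state-by-state. Convexity of $\cQ$ and each $\cP_s$ is assumed, and the semicontinuity and quasi-concavity/quasi-convexity hypotheses follow from bilinearity. When $\cP_s$ is compact, Sion's theorem applies with $\cP_s$ in the role of the compact min-side; when $\cQ$ is compact, the symmetric form (obtained by applying Sion to $-f_s$ and swapping the roles of the two variables) applies with $\cQ$ in the role of the compact max-side. In either orientation, the interchange at each $s$ follows.

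For Case \textrm{(2)}, I would work without convexity, leveraging SA-rectangularity $\cP_s = \bigtimes_{a\in A}\cP_{s,a}$ together with the presence of deterministic actions. Writing $g_a(q) := r(s,a) - \alpha' + \sum_{s'} q(s') u'(s')$, the payoff decomposes as $f_s(\phi, p_s) = \sum_a \phi(a) g_a(p_{s,a})$. Because $\{\delta_a : a \in A\} \subseteq \cQ$ and any convex combination $\sum_a \phi(a) h_a$ is dominated by $\max_a h_a$, it follows that $\sup_{\phi \in \cQ}\sum_a \phi(a) h_a = \max_a h_a$ for any vector $(h_a)_{a\in A}$. On the sup-inf side, SA-rectangularity lets the $\inf$ pass through the nonnegatively weighted sum, yielding $\inf_{p_s} f_s(\phi, p_s) = \sum_a \phi(a)\inf_{q\in\cP_{s,a}} g_a(q)$, whose $\sup$ over $\phi\in\cQ$ equals $\max_a \inf_{q\in\cP_{s,a}} g_a(q)$. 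On the inf-sup side, $\sup_\phi f_s(\phi, p_s) = \max_a g_a(p_{s,a})$; the identity $\inf_{p_s}\max_a g_a(p_{s,a}) = \max_a \inf_{q\in\cP_{s,a}} g_a(q)$ then follows by a short $\epsilon$-approximation argument—choose $q_a^\epsilon \in \cP_{s,a}$ that nearly minimizes $g_a$ for each $a$ independently, and bundle these into a single $p_s^\epsilon \in \cP_s$ using the product structure—combined with the reverse weak-duality inequality. Both sides thus collapse to the common value $\max_a \inf_{q\in\cP_{s,a}} g_a(q)$.

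The proof is largely a careful execution of classical minimax ideas adapted to the S-/SA-rectangular robust MDP setting. The step I expect to require the most care is the $\inf$--$\max$ interchange in Case \textrm{(2)}, which essentially relies on the Cartesian product structure of $\cP_s$ to replace joint optimization by coordinatewise optimization; this is precisely where SA-rectangularity compensates for the absence of convexity. Beyond this bookkeeping, I anticipate no deep obstruction, as the bilinear form of $f_s$ and the finite-dimensional setting make all the standard minimax tools directly applicable.
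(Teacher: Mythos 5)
Your proposal is correct: the statewise interchange $\inf_{p_s}\sup_{\phi}f_s = \sup_{\phi}\inf_{p_s}f_s$ applied to $f_s(\phi,p_s)=E_{\phi,p_s}[r(s,A_0)-\alpha'+u'(X_1)]$ is exactly what is needed, Sion's minimax theorem covers condition (1) since the bilinear payoff is continuous and linear in each argument and one side is convex--compact while the other is convex, and for condition (2) the SA-rectangular product structure lets the infimum decouple across actions while $\{\delta_a:a\in A\}\subseteq\cQ$ collapses the supremum to a maximum over actions, so both orderings equal $\max_{a}\inf_{q\in\cP_{s,a}}g_a(q)$. The paper states this theorem without an explicit written proof (its earlier machinery for the convex case likewise rests on a Sion-type interchange cited from prior work), and your argument is essentially the intended route, with the coordinatewise $\epsilon$-selection plus weak duality correctly handling the non-convex SA-rectangular case.
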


\subsection{Useful Corollaries}
Although the conditions of Theorems \ref{thm:wc_ctrl} and \ref{thm:wc_adv} are self-explanatory, and their verification mirrors that of their non-robust counterparts, in this section we highlight several scenarios, motivated by operations applications, in which Theorems \ref{thm:wc_ctrl} and \ref{thm:wc_adv} apply.

Stability structures are ubiquitous in operations research applications: reasonable policies typically lead to systems that are stable and insensitive to their initial states. We show that, under stability assumptions such as (weak) irreducibility or (strong) unichain, Theorems \ref{thm:wc_ctrl} and \ref{thm:wc_adv} are verified.

\begin{corollary}[Irreducible]\label{cor:exists_irreducible} Assume that $\cQ$ is compact and that for each controller's stationary policy $\Delta:S\ra\cQ$ there exists $p\in\cP$ such that $p_{\Delta}$ is irreducible. Then the constant gain average reward robust Bellman equation \eqref{eqn:r_bellman_eqn_const_gain} has a solution. 

Assume that $\cP_s$, $s\in S$ are compact and for each $p\in \cP$ there exists $\Delta:S\ra\cQ$ such that $p_{\Delta}$ is irreducible. Then the constant gain inf-sup equation \eqref{eqn:inf_sup_eqn_const_gain} has a solution. 
\end{corollary}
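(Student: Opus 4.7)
The strategy is to view Corollary \ref{cor:exists_irreducible} as a direct specialization of Theorems \ref{thm:wc_ctrl} and \ref{thm:wc_adv}, in which the (stronger) irreducibility hypothesis provides a uniform-in-$(s,s')$ witness for the (weaker) communicating condition in Definition \ref{def:comm}. No new machinery is needed; the entire task is to translate ``there exists $p\in\cP$ with $p_\Delta$ irreducible'' (respectively the adversary-side analogue) into the communicating property of Definition \ref{def:comm}, and then quote the relevant half of Theorem \ref{thm:wc_ctrl} or \ref{thm:wc_adv}.

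For the first statement, I would fix an arbitrary stationary controller policy $\Delta:S\ra\cQ$ and verify that it is communicating. The hypothesis supplies a single $p\in\cP$ (possibly depending on $\Delta$) such that the transition matrix $\set{p_\Delta(s'|s):s,s'\in S}$ defines an irreducible Markov chain on $S$. By the standard characterization of irreducibility of a finite-state chain, for every pair $s,s'\in S$ there exists some $N = N(s,s')\geq 1$ with $p_\Delta^N(s'|s) > 0$. This precisely verifies that $\Delta$ is communicating in the sense of Definition \ref{def:comm}, with $C_\Delta = S$. Since $\Delta$ was arbitrary, the controller is communicating, and the compactness of $\cQ$ is part of the hypothesis. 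Invoking Theorem \ref{thm:wc_ctrl}(2) then yields a solution to \eqref{eqn:r_bellman_eqn_const_gain}.

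The second statement is handled symmetrically. For an arbitrary stationary adversary policy $p\in\cP$, the hypothesis provides some $\Delta:S\ra\cQ$ such that $p_\Delta$ is irreducible, so again for every $s,s'\in S$ there exists $N\geq 1$ with $p_\Delta^N(s'|s)>0$. Hence $p$ is communicating with $C_p = S$, and, since this holds for every $p\in\cP$, the adversary is communicating. Combined with the compactness of each $\cP_s$, Theorem \ref{thm:wc_adv}(2) delivers a solution to \eqref{eqn:inf_sup_eqn_const_gain}.

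There is essentially no technical obstacle: the heavy lifting has already been absorbed into Theorems \ref{thm:wc_ctrl} and \ref{thm:wc_adv}, and the corollary merely repackages an easy-to-verify sufficient condition (existence of an irreducible transition matrix under every stationary policy of one side) that implies the one-sided communicating hypothesis. If any care is warranted, it is only to note that the witness $p$ (respectively $\Delta$) supplied by irreducibility is allowed to depend on the fixed $\Delta$ (respectively $p$), which Definition \ref{def:comm} permits since the communicating condition is checked policy-by-policy rather than uniformly across policies.
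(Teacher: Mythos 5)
Your proposal is correct and matches the paper's own argument: both verify that irreducibility of $p_\Delta$ (resp. $p_\Delta$ for a given $p$) yields the one-sided communicating property of Definition \ref{def:comm} policy-by-policy, and then invoke assumption (2) of Theorem \ref{thm:wc_ctrl} (resp. Theorem \ref{thm:wc_adv}). No gaps.
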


\begin{remark}
    We note that the choice of $p$ rendering $p_\Delta$ irreducible may depend on $\Delta$, so this is a relatively weak irreducibility condition for the robust MDP. In contrast, when we pass from irreducibility to the unichain assumption in the following Corollary \ref{cor:all_unichain}, we strengthen the requirement by assuming that $p_\Delta$ is unichain for all $p\in\cP$ and all $\Delta:S\to\cQ$.

    Given this flexibility, Corollary \ref{cor:exists_irreducible} could be easy to verify in applications where the ambiguity set is a distributional ball around a nominal kernel $p_0\in\cP$. In particular, if either (i) $p_0$ induces an irreducible Markov chain for every policy, or (ii) $\cP$ contains transition probabilities with full support (e.g., any S- and SA-rectangular total variation \citep{yang2022toward}, $L^p$ \citep{clavier2024near}, and Wasserstein \citep{clement2021WD_DRMDP} ambiguity sets with a radius $\delta > 0$), then \eqref{eqn:r_bellman_eqn_const_gain} admits a solution.
\end{remark}

\begin{proof}{Proof of Corollary \ref{cor:exists_irreducible}}
By the assumptions in the first claim, for any $\Delta:S\ra\cQ$, there is $p\in \cP$ so that $p_{\Delta}$ is irreducible. In particular, all states communicate under $p_{\Delta}$; i.e. $\forall s,s'\in S$, $p_\Delta^N(s'|s) > 0$ for some $N\geq 1$. This verifies the assumption (2) of Theorem \ref{thm:wc_ctrl} and implies the first statement of Corollary \ref{cor:exists_irreducible}. 

For the second statement, the same argument shows that (2) in Theorem \ref{thm:wc_adv} is satisfied.
\qed\end{proof}

Next, we consider the unichain case, in which a closed recurrent class may coexist with additional transient states.

\begin{definition}[Unichain]
    A transition kernel $Q:S\ra\cP(\cS)$ is unichain if $Q$ has only one closed recurrent class. A controlled transition kernel $p:S\times A\ra\cP(\cS)$ is unichain under the stationary controller policy class $S\ra\cQ$ if for all $\Delta :S\ra\cQ$, $p_{\Delta}$ is unichain. 
\end{definition}
\begin{corollary}[Unichain]\label{cor:all_unichain}
    Assume that $\cQ$ and $\cP_s:s\in S$ are compact. If all $p\in\cP$ is unichain, then the constant gain average reward robust Bellman equation \eqref{eqn:r_bellman_eqn_const_gain} has a solution. 
\end{corollary}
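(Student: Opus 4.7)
The plan is to reduce Corollary \ref{cor:all_unichain} to Theorem \ref{thm:wc_ctrl}(1) by showing that the blanket unichain hypothesis forces the controller to be weakly communicating in the sense of Definition \ref{def:wc}. Since compactness of $\cQ$ and of each $\cP_s$ is already part of the corollary's hypothesis, this reduction is all that is needed.

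Fix a stationary controller policy $\Delta:S\to\cQ$. For each $p\in\cP$, let $R_{p,\Delta}\subseteq S$ denote the unique closed recurrent class of the unichain $p_\Delta$, and define the candidate class
\[
C_\Delta \;:=\; \bigcup_{p \in \cP} R_{p,\Delta}.
\]
The transience condition of Definition \ref{def:wc} is then immediate: any $s \in C_\Delta^c$ lies outside every $R_{p,\Delta}$, and in a finite-state unichain the states outside the recurrent class are transient, so $s$ is transient under every stationary adversary policy.

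For the reachability condition, take $s,s' \in C_\Delta$ and pick any $p \in \cP$ for which $s' \in R_{p,\Delta}$; such a $p$ exists by the definition of $C_\Delta$. If $s \in R_{p,\Delta}$ as well, then $s$ and $s'$ communicate inside the closed recurrent class of $p_\Delta$, so $p_\Delta^N(s'|s) > 0$ for some $N \geq 1$. Otherwise $s$ is transient under $p_\Delta$, but in a finite-state unichain any transient starting point is absorbed into $R_{p,\Delta}$ in finite time with positive probability; concatenating a positive-probability first-entry path to $R_{p,\Delta}$ with a path inside $R_{p,\Delta}$ to $s'$ again yields $p_\Delta^N(s'|s) > 0$.

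The main obstacle is exactly this reachability step: two states in $C_\Delta$ may each be recurrent only under \emph{different} adversary kernels, whereas Definition \ref{def:wc} requires a single $p$ to connect them. The above argument sidesteps this by choosing $p$ based on the destination $s'$ rather than the source $s$, and then using unichain absorption to handle the case where $s$ happens to be transient under that $p$. With both conditions of Definition \ref{def:wc} verified for arbitrary $\Delta$, the controller is weakly communicating, and Theorem \ref{thm:wc_ctrl}(1) produces a solution to \eqref{eqn:r_bellman_eqn_const_gain}.
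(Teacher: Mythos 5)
Your proposal is correct and follows essentially the same route as the paper: both define $C_\Delta = \bigcup_{p\in\cP} R_{\Delta}(p)$, verify weak communication of every stationary controller policy (picking the kernel $p$ according to the target state $s'$ and using unichain absorption for the reachability step), and invoke Theorem \ref{thm:wc_ctrl}(1). Your treatment merely spells out the transient-source case that the paper handles implicitly.
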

\begin{proof}{Proof of Corollary \ref{cor:all_unichain}}
Fix $\Delta:S\ra\cQ$. For every $p\in\cP$, let $R_{\Delta}(p)\subseteq S$ denote the closed recurrent class of $p_\Delta$. We define 
$$C_{\Delta}:=\bigcup_{p\in\cP} R_\Delta(p)$$ and show that $\Delta$ is weakly communicating with communicating class $C_{\Delta}$.

Consider fixed $(s,s')\in C_{\Delta}$. Then, by construction, there exists $q\in \cP$ s.t. $s'\in R_{\Delta}(q)$. Since $s'$ is recurrent under $q_{\Delta}$, $s$ can reach $s'$; i.e. $q^N_\Delta(s'|s) > 0$ for some $N\geq 1$.

On the other hand, for $x\in C_{\Delta}^c$, $x\notin R_\Delta(p)$ for any $p\in\cP$; i.e. $x$ is transient for all $p\in \cP$. 

Therefore, $\Delta$ is weakly communicating. As $\Delta:S\ra\cQ$ is arbitrary, we conclude that the controller is weakly communicating. Thus, the assumption (1) of Theorem \ref{thm:wc_ctrl} is satisfied, implying Corollary \ref{cor:all_unichain}. 
\qed\end{proof}

Corollary \ref{cor:all_unichain} extends the dynamic programming results of \citet{wang_model-free_2023} to S-rectangular and non-convex settings. Nonetheless, the assumption that every $p\in\cP$ is unichain remains strong. For example, Corollary \ref{cor:all_unichain} does not apply when certain stationary policy pairs induce a kernel $p_\Delta$ with multiple recurrent classes.

Building on Theorems \ref{thm:wc_ctrl} and \ref{thm:wc_adv}, we establish a more general result, Corollary \ref{cor:overlapped_recurrent_class}, which allows stationary policies to induce chains with multiple recurrent classes. Before we state the result, for fixed $\Delta:S\ra\cQ$ and $p\in\cP$, we define the collections of recurrent classes $$\begin{aligned}
\cR_{\Delta}&:=\set{R\subseteq S: \exists p\in\cP \text{ such that $R$ is a closed communicating class of $p_\Delta$}}, \\
\cR_{p}&:=\set{R\subseteq S: \exists \Delta:S\ra\cQ \text{ such that $R$ is a closed communicating class of $p_\Delta$}}. 
\end{aligned}$$
Here, a closed communicating class is a set of states that all communicate with each other and from which no state can reach any state outside the set \citep{norris1998markov}.
\begin{definition}[Overlap-Connected Closed Communicating Classes (OCCCC)] We say that $\cR_{\Delta}$ (or $\cR_p$) is overlap-connected if for each $R,R'\in\cR_{\Delta}$ (or $\cR_p$), there exists integer $k\geq 1$ and $R_0,R_1,\ds,R_k\in \cR_{\Delta}$ (or $\cR_p$) such that $R_0 = R$, $R_k = R'$, and $R_i\cap R_{i+1}\neq \varnothing$ for all $0\leq i\leq k-1$. 
\end{definition}

In other words, an overlap-connected family of sets $\mathcal R_{\Delta}$ means that for any two sets in the family, they can be reached from one to the other by a finite chain of sets in $\mathcal R_{\Delta}$ such that each consecutive pair in the chain has a nonempty intersection. The same interpretation also applies to
$\mathcal R_{p}$.

As a mnemonic, the acronym \textsc{OCCCC} visually evokes the \mbox{$\mathrm{HO\!-\!CH_2\!-\!CH_2\!-\!CH_2\!-\!CH_3}$} molecular structure of 1-butanol.

\begin{corollary}[OCCCC]\label{cor:overlapped_recurrent_class}
Assume that $\cQ$ is compact and all $\cP_s$, $s\in S$ are convex and compact. If $\cR_{\Delta}$ is overlap-connected for all $\Delta:S\ra\cQ$, then \eqref{eqn:r_bellman_eqn_const_gain} has a solution. 

Assume that  $\cP_s$, $s\in S$ are compact and $\cQ$ is convex and compact. If $\cR_{p}$ is overlap-connected for all $p\in\cP$, then \eqref{eqn:inf_sup_eqn_const_gain} has a solution. 
\end{corollary}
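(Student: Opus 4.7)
The plan is to invoke Theorem \ref{thm:wc_ctrl}(1) for the first claim and Theorem \ref{thm:wc_adv}(1) for the second, in each case reducing the problem to verifying the weak-communication property of the controller (respectively adversary). The hypotheses already provide the compactness needed by those theorems, so the only real task is to exhibit, for each stationary controller (respectively adversary) policy, an appropriate communicating class in the sense of Definition \ref{def:wc}. The convexity of the ambiguity set (respectively controller decision set) is precisely what I would exploit: it allows me to construct a single ``averaged'' kernel (respectively randomized policy) that simultaneously witnesses reachability across multiple overlapping recurrent classes.

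For the first statement, I would fix a stationary controller policy $\Delta:S\ra\cQ$ and set
$$C_\Delta := \bigcup_{R\in\cR_\Delta} R,$$
which is nonempty because every finite-state chain $p_\Delta$ has at least one closed communicating class, so $\cR_\Delta\neq\varnothing$. The transience condition is automatic: if $s\in C_\Delta^c$ and $p\in\cP$, then $s$ cannot lie in any closed communicating class of $p_\Delta$ (otherwise that class would belong to $\cR_\Delta$ and $s$ would be in $C_\Delta$), hence $s$ is transient under $p_\Delta$. This takes care of the $C_\Delta^c$ part of Definition \ref{def:wc}.

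For pairwise reachability within $C_\Delta$, I would pick $s,s'\in C_\Delta$ and classes $R,R'\in\cR_\Delta$ containing them, apply overlap-connectedness to obtain a chain $R_0=R,R_1,\ldots,R_k=R'$ in $\cR_\Delta$ with $R_i\cap R_{i+1}\neq\varnothing$, and select associated kernels $p_0,\ldots,p_k\in\cP$ with $R_i$ a closed communicating class of $p_{i,\Delta}$. Then I would form the convex combination
$$p^* := \frac{1}{k+1}\sum_{i=0}^k p_i,$$
which lies in $\cP$ because each $\cP_s$ is convex. Entrywise $p^*_\Delta \geq \frac{1}{k+1}\, p_{i,\Delta}$ for every $i$, so any transition positive under some $p_{i,\Delta}$ remains positive under $p^*_\Delta$. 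Picking intermediate states $x_{i+1}\in R_i\cap R_{i+1}$ and concatenating reachability paths inside each $R_i$ under $p^*_\Delta$ yields a finite trajectory $s\to x_1\to\cdots\to x_k\to s'$, hence $(p^*_\Delta)^N(s'|s)>0$ for some $N\geq 1$. This verifies the reachability condition in Definition \ref{def:wc}, so $\Delta$ is weakly communicating; applying Theorem \ref{thm:wc_ctrl}(1) then produces a solution to \eqref{eqn:r_bellman_eqn_const_gain}.

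The second statement follows by the symmetric argument: for each $p\in\cP$, set $C_p:=\bigcup_{R\in\cR_p} R$, use convex combinations $\Delta^*=\frac{1}{k+1}\sum_{i=0}^k \Delta_i$ of stationary controller policies (well-defined because $\cQ$ is convex), and apply Theorem \ref{thm:wc_adv}(1). The one conceptual hurdle--modest once the convex-combination trick is identified--is recognizing that overlap-connectedness is exactly the combinatorial structure that lets a finite sequence of policy-dependent reachabilities be stitched together inside a single averaged kernel or randomized policy; without it, one would be stuck with a different witness for each pair of adjacent classes and no way to traverse them consecutively.
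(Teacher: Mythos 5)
Your proposal is correct and takes essentially the same route as the paper: define $C_\Delta$ (resp.\ $C_p$) as the union of the closed classes in $\cR_\Delta$ (resp.\ $\cR_p$), get transience of the complement for free, use convexity to average the witnessing kernels (resp.\ policies) so that all the stitched paths stay positive, and then invoke Theorem \ref{thm:wc_ctrl}(1) and Theorem \ref{thm:wc_adv}(1). The only cosmetic difference is that the paper builds a single pair-independent witness by averaging state-wise over the classes containing each state, while you average the $k+1$ kernels (or policies) along the overlap chain separately for each pair $(s,s')$ — both are admissible since Definition \ref{def:wc} lets the witness depend on the pair.
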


The proof of Corollary \ref{cor:overlapped_recurrent_class} is deferred to Appendix \ref{sec:proof_cor_overlapped_recurrent_class}.

\section{Proof of Theorem \ref{thm:wc_ctrl}}\label{sec:proof_of_thm_wc_ctrl}

In this section, we present the main argument for Theorem \ref{thm:wc_ctrl}, establishing the existence of a solution to the robust Bellman equation \eqref{eqn:r_bellman_eqn_const_gain}. Some intermediate lemmas are deferred to the appendix for clarity. We note that the proof of Theorem \ref{thm:wc_adv} follows a similar strategy, with the roles of the controller and adversary interchanged (see Appendix \ref{sec:proof_wc_adv}).

Our proof primarily addresses the weakly communicating case in assumption (1). Since a communicating controller is a special case, the argument under assumption (2) carries over with only minor changes. Importantly, assumption (2) does not require compactness of $\cP_s$ for $s\in S$. We will explain how the proof extends under this weaker assumption and why compactness is unnecessary in that case; see Remark \ref{rmk:cmpct_P_not_needed} and \ref{rmk:cmpct_P_not_needed_2}.

By Theorem \ref{thm:existence_of_sol}, it suffices to show that under the assumptions of Theorem \ref{thm:wc_ctrl}, the solution $v^*_\gamma$ to \eqref{eqn:discounted_r_bellman_eqn} has uniformly bounded span. To show this, we take the following steps.

\subsection*{Some Preliminary Constructions}

We begin by stating and postponing the proof of a lemma that captures a straightforward implication of a controller policy $\Delta$ being weakly communicating.

\begin{lemma}\label{lemma:wc_ctrl_implication}
If the stationary controller policy $\Delta$ is weakly communicating and $\cP$ is S-rectangular, then for any $w\in S$ and any $y\in C_\Delta$, there exist $p\in\cP$ and $N\le |S|$ such that $p_\Delta^{N}(y|w)>0$.
\end{lemma}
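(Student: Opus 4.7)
The plan is to build a state path $w = s_0, s_1, \dots, s_n = y$ with $n \le |S|$ and distinct vertices along which every one-step transition has positive probability under some $p^{(i)}_\Delta$ with $p^{(i)} \in \cP$, and then to splice the $p^{(i)}$'s into a single $p \in \cP$ using S-rectangularity. Distinctness of the $s_i$'s is exactly what makes this splicing well-defined: assign $p_{s_i} := p^{(i)}_{s_i}$ for $i=0,\dots,n-1$ and take arbitrary components of $p$ at the remaining states; then $p_\Delta(s_{i+1}|s_i) = p^{(i)}_\Delta(s_{i+1}|s_i) > 0$ for each $i$, whence $p_\Delta^n(y|w) > 0$ with $n \le |S|$.

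To produce such a path, I would work in the directed graph $G$ on vertex set $S$ where $(s,s')$ is an edge whenever some $p \in \cP$ satisfies $p_\Delta(s'|s) > 0$, and first establish $w \to y$ reachability in $G$. Any $G$-walk can then be pruned to a simple path by deleting any cycle $s_i = s_j$ ($i<j$); the shortcut $s_i \to s_{j+1}$ remains a $G$-edge (witnessed by whichever $p^{(j)}$ realized $s_j \to s_{j+1}$), so the pruned walk is still valid, has distinct vertices, and hence length at most $|S|-1 \le |S|$.

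For the reachability itself, if $w \in C_\Delta$ the weak-communication hypothesis directly supplies a $p$ and $N$ with $p_\Delta^N(y|w) > 0$, yielding a $G$-walk. For $w \in C_\Delta^c$, I would pick any $p \in \cP$; since every state of $C_\Delta^c$ is transient under every $p_\Delta$, all recurrent states of $p_\Delta$ sit inside $C_\Delta$. Because the state space is finite, the chain started at $w$ almost surely enters one of these recurrent classes, so a positive-probability $p_\Delta$-path $w \to \cdots \to z$ exists for some $z \in C_\Delta$. Concatenating with a $G$-walk from $z$ to $y$, supplied by the $C_\Delta$ case, completes the reachability argument.

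The main obstacle is that the walk from $w$ to $y$ is a priori assembled from two segments using different adversary kernels, which may visit a common intermediate state where the two kernels disagree; without care, S-rectangularity does not permit a consistent choice of $p$. Pruning the walk to a simple path in $G$ circumvents this precisely, and simultaneously delivers the length bound $n \le |S|$ demanded by the statement.
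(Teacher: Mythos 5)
Your proof is correct and follows the same two-case skeleton as the paper's: for $w\in C_\Delta$ invoke weak communication directly, and for $w\in C_\Delta^c$ use transience of $C_\Delta^c$ (so every recurrent class of $p_\Delta$ lies in $C_\Delta$ and the finite chain reaches one with positive probability) to enter $C_\Delta$, then splice kernels via S-rectangularity. Where you genuinely differ is the splicing mechanism: the paper glues exactly two kernels along the partition $C_\Delta^c$ versus $C_\Delta$ (the arbitrary $p$ outside, the witness $p^{x,y,\Delta}$ inside) and asserts the concatenated path is non-repeating with positive probability under the glued kernel, which tacitly assumes the $C_\Delta$-segment never leaves $C_\Delta$; you instead work in the reachability graph $G$, prune any walk to a simple path, and assign $p_{s_i}:=p^{(i)}_{s_i}$ state by state along that path, filling in arbitrary components elsewhere. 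Your version is more careful on exactly the point you flag---different witnesses may disagree at a shared intermediate state---and it yields $N\le |S|$ directly from the simple-path length, so it buys a cleaner justification of the step the paper passes over ``by construction.'' The only loose end is the degenerate case $w=y$, where your pruning collapses the walk to length zero; either accept $N=0$ (harmless for how the lemma is used downstream, since $\tau_y=0$ there) or prune to a simple cycle instead, which keeps $N\ge 1$ and still gives $N\le |S|$.
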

The proof of this lemma is given in Section \ref{sec:proof_of_aux_lemma_wc_ctrl}.

Next, we will leverage the compactness of $\cQ$ to construct a finite subset $B$ of stationary controller policies that yields a uniform lower bound on hitting probabilities. We then use $B$ to obtain a version of Lemma \ref{lemma:wc_ctrl_implication} that is uniform over all stationary controllers; see Lemma \ref{lemma:wc_ctrl_exists_adv_lower_bd_prob}.

Assume the controller is weakly communicating, $\cP$ is S-rectangular, and $\cQ$ is compact. By Lemma \ref{lemma:wc_ctrl_implication}, for any stationary policy $\Delta:S\ra\cQ$ and any $w\in S$, $y\in C_\Delta$, there exist $p\in\cP$ and $N \le |S|$ (both possibly depending on $(w,y,\Delta)$) such that $p_\Delta^{N}(y|w)>0.$

Note that if we fix $p = p^{w,y,\Delta}$ and $N = N^{w,y,\Delta}$ given by Lemma \ref{lemma:wc_ctrl_implication} for this particular $(w,y,\Delta)$, then the mapping
$\eta\ra p^N_\eta$  is continuous for $\eta:S\ra \cP(\cA)$. This is because $\eta \ra p_\eta$,  $p_{\eta}\ra p_{\eta}^N$ and $p_\eta\ra M_{\eta,p}^\Delta$ are continuous. So, there must exists an open neighborhood $O_\Delta\subseteq\set{S\ra\cP(\cA)}$ of $\Delta$ s.t.\begin{equation}
    p_{\eta}^{N} (y|w) \geq p_{\Delta}^{N}(y|w)/2,\quad\forall \eta\in \overline{O_\Delta}.\label{eqn:to_use_within_O_prob_bound}
\end{equation}
where $\overline{O_\Delta}$ denotes the closure of $O_\Delta$. Moreover, $\set{O_\Delta:\Delta\in \set{S\ra\cQ} }$ forms an open cover of $\set{S\ra\cQ}$. 

We also consider another (not necessarily open) cover $\set{K_\Delta:\Delta\in \set{S\ra\cQ} }$ of $\set{S\ra\cQ}$. For any fixed $\Delta\in\set{S\ra\cQ}$, if $C_\Delta = S$, i.e. the entire state space is communicating, then let $K_\Delta = \set{S\ra \cQ}$. 

On the other hand, if $C_{\Delta}^c\neq \varnothing$, we construct $K_\Delta$ as in the following Lemma. 
\begin{lemma}\label{lemma:construct_K_Delta}
    Assume the controller is weakly communicating, $\cP$ is S-rectangular, and $\cQ$ and $\cP$ are compact. Then, for each $\Delta:S\ra\cQ$ with $C_{\Delta}^c\neq \varnothing$, there exists an open neighborhood $K_\Delta$ of $\Delta$ such that \begin{equation}\label{eqn:to_use_within_K_eval_bd}  0\leq \sup_{p\in\cP}e^\top (I-M_{\eta,p}^\Delta)\inv  e \leq \sup_{p\in\cP}e^\top (I-M_{\Delta,p}^\Delta)\inv e + 1 < \infty,\quad \forall \eta\in \overline{K_\Delta},
\end{equation}
where both suprema are attained. In this expression, $M_{\eta,p}^\Delta$ is the principal submatrix of $p_{\eta}$ on $C^c_{\Delta}$ defined by $M_{\eta,p}^\Delta(s,s') = p_{\eta}(s'|s)$ for $s,s'\in C^c_{\Delta}$, and $e$ denotes the all-ones vector in $\R^{|C_\Delta^c|}$.
\end{lemma}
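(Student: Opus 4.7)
The plan is to combine joint continuity of the matrix--inverse map with a tube--lemma argument, using that $\Delta$ being weakly communicating forces $I-M_{\Delta,p}^\Delta$ to be invertible uniformly in $p\in\cP$. First, at $\eta=\Delta$: since every $s\in C_\Delta^c$ is transient under $(\Delta,p)$ for every $p\in\cP$, the restricted nonnegative substochastic matrix $M_{\Delta,p}^\Delta$ satisfies $(M_{\Delta,p}^\Delta)^n\to 0$, so $I-M_{\Delta,p}^\Delta$ is invertible with $(I-M_{\Delta,p}^\Delta)^{-1}=\sum_{n\geq 0}(M_{\Delta,p}^\Delta)^n\geq 0$, the expected--visit--count matrix on $C_\Delta^c$ before exiting. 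Since $p\mapsto M_{\Delta,p}^\Delta$ is continuous with values in invertible matrices and $\cP$ is compact, $L_\Delta := \sup_{p\in\cP} e^\top(I-M_{\Delta,p}^\Delta)^{-1}e$ is finite and attained; this already handles both inequalities in \eqref{eqn:to_use_within_K_eval_bd} at $\eta=\Delta$.

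Next I would extend this bound to an open neighborhood. The set $V := \set{(\eta,p) : \det(I-M_{\eta,p}^\Delta)\neq 0}$ is open in $\set{S\ra\cP(\cA)}\times\cP$ (determinant is a polynomial in entries) and contains $\set{\Delta}\times\cP$; the tube lemma, leveraging compactness of $\cP$, yields an open neighborhood $U$ of $\Delta$ with $U\times\cP\subseteq V$. On $U\times\cP$ the map $(\eta,p)\mapsto e^\top(I-M_{\eta,p}^\Delta)^{-1}e$ is jointly continuous, since the matrix inverse is a rational function of its entries wherever the determinant is nonzero. Berge's maximum theorem then implies that $\eta\mapsto \max_{p\in\cP} e^\top(I-M_{\eta,p}^\Delta)^{-1}e$ is continuous on $U$ with the maximum attained at each $\eta$. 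Because this maximum equals $L_\Delta$ at $\eta=\Delta$, I can pick $K_\Delta$ to be any open ball around $\Delta$ (in a metric compatible with $\set{S\ra\cP(\cA)}$) whose closure lies inside the open set $\set{\eta\in U : \max_{p\in\cP} e^\top(I-M_{\eta,p}^\Delta)^{-1}e < L_\Delta + 1}$. For every $\eta\in\overline{K_\Delta}$ and $p\in\cP$, invertibility of $I-M_{\eta,p}^\Delta$ combined with substochasticity of $M_{\eta,p}^\Delta$ forces $\rho(M_{\eta,p}^\Delta)<1$ by Perron--Frobenius (its spectral radius is itself an eigenvalue, and $\rho=1$ would make $I-M$ singular), so $(I-M_{\eta,p}^\Delta)^{-1}=\sum_{n\geq 0}(M_{\eta,p}^\Delta)^n\geq 0$ and the lower bound $0\leq\sup_p e^\top(I-M_{\eta,p}^\Delta)^{-1}e$ is immediate.

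The main obstacle is uniformity in $p\in\cP$: both invertibility and the size bound must hold simultaneously for every $p$ across the whole closure $\overline{K_\Delta}$, not merely pointwise. The tube lemma delivers the former, and parametric continuity of the maximum delivers the latter, so compactness of $\cP$ is doing all the heavy lifting. No individual step is deep; the care required is in assembling the two compactness arguments so that one does not leave $\cP$ behind when passing from pointwise control near $\Delta$ to uniform control on $\overline{K_\Delta}\times\cP$. Notably, assumption (2) of Theorem \ref{thm:wc_ctrl} (the fully communicating case) sidesteps this entirely because $C_\Delta^c=\varnothing$ renders the lemma vacuous, which is why compactness of $\cP$ can be dropped there.
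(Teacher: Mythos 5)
Your proof is correct and follows essentially the same route as the paper: establish invertibility of $I-M_{\Delta,p}^\Delta$ from transience, use compactness of $\cP$ to make this uniform on a neighborhood of $\Delta$ (your tube-lemma step plays the role of the paper's Berge argument applied to $\min_{p\in\cP}\abs{\det(I-M_{\eta,p}^\Delta)}$), and then apply Berge's maximum theorem plus continuity to get attainment and the $+1$ bound on a closed neighborhood. Your explicit Perron--Frobenius justification that invertibility plus substochasticity forces $\rho(M_{\eta,p}^\Delta)<1$, hence nonnegativity of the inverse via the Neumann series, is a welcome extra detail that the paper leaves implicit.
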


Therefore, we have defined $O_\Delta$ and $K_\Delta$ for all $\Delta:S\ra \cQ$. With these constructions, we define $$G_\Delta:= O_\Delta\cap K_{\Delta}. $$
Note that when $C_\Delta^c = \varnothing$, then $G_\Delta = O_\Delta\ni \Delta$ is non-empty and open. When $C_\Delta^c \neq \varnothing$, both $O_\Delta$ and $K_\Delta$ are open neighborhood of $\Delta$. Therefore, $\set{G_\Delta:\Delta\in\set{S\ra\cQ}}$ is an open cover of $\set{S\ra\cQ}$.

Since $\cQ$ is compact, the controller policy set $\set{S\ra \cQ}$, seen as stochastic matrices in $\R^{|S|\times |A|}$, is also compact.  Hence, there exists a finite sub-cover $\set{G_{\Delta}:\Delta\in B}$ where $B :=\set{\Delta_1,\ds,\Delta_{|B|}}\subseteq \set{S\ra \cQ}$ is a finite subset.

With this construction, we prove the following Lemma. 
\begin{lemma}\label{lemma:wc_ctrl_exists_adv_lower_bd_prob}
Under the assumptions of Theorem \ref{thm:wc_ctrl}, there exists $\delta>0$ such that, for any stationary controller policy $\Delta:S\to\cQ$ with $\Delta\in G_{\Delta_k}$ and any $y\in C_{\Delta_k}, w\in S$, there exists $p\in\cP$ and $N \leq |S|$ such that $p_\Delta^{N}(y|w)\geq \delta$. 
\end{lemma}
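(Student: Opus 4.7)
My plan is to combine the pointwise positivity from Lemma \ref{lemma:wc_ctrl_implication} with the local continuity bound \eqref{eqn:to_use_within_O_prob_bound} on each $O_{\Delta_k}$, and then aggregate over the finite set $B$ to extract a single uniform $\delta$.

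\textbf{Step 1: Extract finitely many witness pairs.} For each base policy $\Delta_k\in B$ and each pair $(w,y)$ with $w\in S$, $y\in C_{\Delta_k}$, I apply Lemma \ref{lemma:wc_ctrl_implication} to produce a kernel $p^{k,w,y}\in\cP$ and an integer $N^{k,w,y}\le |S|$ such that
$$c_{k,w,y}\;:=\;(p^{k,w,y})_{\Delta_k}^{N^{k,w,y}}(y\mid w)\;>\;0.$$
These exist because $\Delta_k$ is weakly communicating and $\cP$ is S-rectangular.

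\textbf{Step 2: Transport positivity from $\Delta_k$ to nearby $\Delta$'s.} By the construction of $O_{\Delta_k}$ (applied simultaneously for each of the finitely many pairs $(w,y)$, since a finite intersection of open neighborhoods of $\Delta_k$ is still an open neighborhood of $\Delta_k$), the inequality \eqref{eqn:to_use_within_O_prob_bound} gives
$$(p^{k,w,y})_\eta^{N^{k,w,y}}(y\mid w)\;\ge\;\tfrac{1}{2}\,c_{k,w,y},\qquad \forall\,\eta\in\overline{O_{\Delta_k}}.$$
Since $G_{\Delta_k}=O_{\Delta_k}\cap K_{\Delta_k}\subseteq \overline{O_{\Delta_k}}$, this bound holds in particular for every $\Delta\in G_{\Delta_k}$.

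\textbf{Step 3: Take a uniform minimum.} Because $B=\{\Delta_1,\dots,\Delta_{|B|}\}$ is finite and the state space $S$ is finite, the set of triples $(k,w,y)$ with $k\in\{1,\dots,|B|\}$, $w\in S$, $y\in C_{\Delta_k}$ is finite. Hence
$$\delta\;:=\;\min_{k,\,w\in S,\,y\in C_{\Delta_k}}\tfrac{1}{2}\,c_{k,w,y}\;>\;0$$
is a finite minimum of strictly positive numbers. Given any $\Delta\in G_{\Delta_k}$ and any pair $(w,y)$ with $w\in S$, $y\in C_{\Delta_k}$, choosing $p=p^{k,w,y}\in\cP$ and $N=N^{k,w,y}\le |S|$ yields $p_\Delta^N(y\mid w)\ge \delta$, as required.

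The argument is mostly a bookkeeping assembly, so there is no single hard step; the only delicate point is ensuring that the neighborhood $O_{\Delta_k}$ can be chosen to simultaneously validate the continuity bound for all the finitely many relevant pairs $(w,y)$, which is handled by taking the finite intersection of the pairwise neighborhoods produced by \eqref{eqn:to_use_within_O_prob_bound}. Under assumption (2) of Theorem \ref{thm:wc_ctrl}, the controller is communicating so $C_{\Delta_k}=S$ and the same argument applies without any reference to $K_{\Delta_k}$ or compactness of $\cP_s$.
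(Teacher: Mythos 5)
Your proposal is correct and follows essentially the same route as the paper: invoke Lemma \ref{lemma:wc_ctrl_implication} to get witnesses $(p^{w,y,\Delta_k},N^{w,y,\Delta_k})$ for each base policy in the finite subcover $B$, use the continuity bound \eqref{eqn:to_use_within_O_prob_bound} defining $O_{\Delta_k}\supseteq G_{\Delta_k}$ to transfer the positive probability to every $\Delta\in G_{\Delta_k}$ at a factor $1/2$, and take the minimum over the finitely many triples $(k,w,y)$ to get a uniform $\delta>0$. Your explicit remark that $O_{\Delta_k}$ must be taken as the finite intersection of the per-pair neighborhoods only makes precise a point the paper leaves implicit, so no substantive difference.
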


\begin{proof}{Proof of Lemma \ref{lemma:wc_ctrl_exists_adv_lower_bd_prob}}

Fix $w\in S$ and $\Delta:S\ra\cQ$. As $\set{G_{\Delta}:\Delta\in B}$ covers $\set{S\ra\cQ}$, for any stationary control policy $\Delta$, there is $k \in\set{1,\ds,|B|}$ s.t. $\Delta\in G_{\Delta_k}$. We consider an arbitrary $y\in C_{\Delta_k}$. 

Since $B$ is finite, by the construction of $G_\Delta$, we have that \begin{equation}\min_{j = 1,\ds,| B|} \min_{y\in C_{\Delta_j}}(p^{w,y,\Delta_j}_{\Delta_j})^{N^{w,y,\Delta_j}} (y|w) =: \delta_w > 0,\label{eqn:to_use_B_policy_prob_bd}
\end{equation}
where $p^{w,y,\Delta_j}\in\cP$ and $N^{w,y,\Delta_j}\leq |S|$ is given by Lemma \ref{lemma:wc_ctrl_implication}. Note that $\delta_w$ is independent of $\Delta_j$ and $y$. Since the state space is finite, we define $\delta:= \min_{w\in S}\delta_w/2 > 0$. 

To show Lemma \ref{lemma:wc_ctrl_exists_adv_lower_bd_prob}, we choose $p = p^{w,y,\Delta_k}$ and $N = N^{w,y,\Delta_k}\leq |S|$, again given by Lemma \ref{lemma:wc_ctrl_implication}. Then
$$p_\Delta^N(y|w)\geq p_{\Delta_k}^{N}(y|w)/2 \geq \delta_w/2 \geq\delta > 0$$
where the first inequality follows from the construction of $G_{\Delta_k}$ in \eqref{eqn:to_use_within_O_prob_bound} and the second inequality is because of \eqref{eqn:to_use_B_policy_prob_bd}. Also note that by Lemma \ref{lemma:wc_ctrl_implication}, $N = N^{w,y,\Delta_k} \le |S|$.  This proves Lemma \ref{lemma:wc_ctrl_exists_adv_lower_bd_prob}.     
\qed\end{proof}
    
\begin{remark}\label{rmk:cmpct_P_not_needed}
Notice that in the proof of Lemma 3, only the property of $O_\Delta$ was used, not that of $K_\Delta$. Thus, to establish Lemma \ref{lemma:wc_ctrl_exists_adv_lower_bd_prob}, it suffices to construct a finite subcover from the collection $\set{O_\Delta:\Delta\in\set{S\to\cQ}}$. Since the construction of $O_\Delta$ does not require the compactness of $\cP_s$ for $s\in S$, Lemma 3 holds even without assuming compactness of the adversary’s ambiguity set. The same observation applies to Lemma \ref{lemma:wc_bd_E_hitting_time}.
\end{remark}
    
\subsection*{Decomposing $\spnorm{v_\gamma^*}$}

    First, note that $v_\gamma^*$ solves \eqref{eqn:discounted_r_bellman_eqn}, then for each $\epsilon > 0$, there exists $\Delta_\epsilon:S\ra\cQ$ s.t.  for all $s\in S$
    $$\begin{aligned} v^*_\gamma(s) &\leq  \inf_{p_s\in\cP_s}E_{\Delta_\epsilon(\cd|s),p_s}[r(s,A_0) +\gamma v_\gamma^* (X_1)] + (1-\gamma)\epsilon.
    \end{aligned}$$

    Then, by Theorem 1\&5 in \citet{wang2023foundationRMDP}, for $\kappa\in\KH$ denoting $$ v_\gamma^{\Delta_\epsilon,\kappa} (s):=E_s^{\Delta_\epsilon,\kappa}\sum_{k=0}^\infty\gamma^k r(X_k,A_k),$$ we have that for all $s\in S$,
    $$0\leq v_\gamma^*(s) - \inf_{\kappa\in\KS}v_\gamma^{\Delta_\epsilon,\kappa}(s)\leq \epsilon; $$
    i.e. $\Delta_\epsilon$ is $\epsilon$-optimal. Moreover, there exists stationary $p_\epsilon\in \KS$ s.t. for all $s\in S$
    $$0\leq v_\gamma^{\Delta_\epsilon,p_\epsilon}(s) -  \inf_{\kappa\in\KS}v_\gamma^{\Delta_\epsilon,\kappa}(s)\leq \epsilon.$$

    Let $s_\vee,s_\wedge\in S$ so that $v_\gamma^*(s_\vee) = \max_{s\in S}v_\gamma^*(s)$ and $v_\gamma^*(s_\vee) = \min_{s\in S}v_\gamma^*(s)$. Recall that $\set{G_\Delta:\Delta\in  B}$ constructed in step 2 is an open cover of $\set{S\ra\cQ}$. Then, $\Delta_\epsilon\in G_{\Delta_k}$ for some $k \leq |B|$. On the other hand, by the definition of $\Delta_\epsilon$ and $p_\epsilon$,  \begin{equation}\label{eqn:to_use_vspan_decomp}
       \begin{aligned}
    \spnorm{v_\gamma^*} &= v_{\gamma}^*(s_\vee)-v_\gamma^*(s_\wedge) \\
    &= \sup_{\pi\in\PiS}\inf_{\kappa\in\KH}v_\gamma^{\pi,\kappa}(s_\vee) - \sup_{\pi\in\PiS}\inf_{\kappa\in\KH}v_\gamma^{\pi,\kappa}(s_\wedge)  \\
    &\leq \inf_{\kappa\in \KH}v_{\gamma}^{\Delta_\epsilon,\kappa}(s_\vee)-\inf_{\kappa\in \KH}v_{\gamma}^{\Delta_\epsilon,\kappa}(s_\wedge) + \epsilon\\
    &\leq v_{\gamma}^{\Delta_\epsilon,\kappa}(s_\vee)-v_{\gamma}^{\Delta_\epsilon,p_\epsilon}(s_\wedge) + 2\epsilon.\\
    &= \underbrace{v_{\gamma}^{\Delta_\epsilon,\kappa}(s_\vee)- v_\gamma^{\Delta_\epsilon,p_\epsilon}(y_\wedge) }_{\xi_1}+\underbrace{v_\gamma^{\Delta_\epsilon,p_\epsilon}(y_\wedge) - v_{\gamma}^{\Delta_\epsilon,p_\epsilon}(s_\wedge) }_{\xi_2}+ 2\epsilon.
    \end{aligned} 
    \end{equation}
    for any $\kappa\in\KH$, where \begin{equation}\label{eqn:to_use_def_y_wedge}
        y_\wedge \in\argmin{y\in  C_{\Delta_k}} v_\gamma^{\Delta_\epsilon,p_\epsilon}(y).
    \end{equation}

    Next, we will upper bound $\xi_1$ and $\xi_2$ separately. Before we proceed, we make the following note.

    \begin{remark}\label{rmk:cmpct_P_not_needed_1}
        Note that if the controller is communicating, $C_{\Delta_k} = S$. So, $$\xi_2 = \min_{s\in S}v_{\gamma}^{\Delta_\epsilon,p_\epsilon}(s) - v_{\gamma}^{\Delta_\epsilon,p_\epsilon}(s_\wedge) \leq 0.$$ So, under assumption (2) of the theorem, we only need to uniformly bound $\xi_1$, which will not require the compactness of $\cP_s$ for $s\in S$; see Remark \ref{rmk:cmpct_P_not_needed_2}.  
    \end{remark}

    \subsection*{Upper-Bounding $\xi_1$}
    We will upper bound $\xi_1$ using the expected hitting time of $y$. To proceed, we first state Lemma \ref{lemma:wc_bd_E_hitting_time}, with the proof deferred to Section \ref{sec:proof_of_aux_lemma_wc_ctrl}.
    
    \begin{lemma}\label{lemma:wc_bd_E_hitting_time}
        Under the assumptions of Lemma \ref{lemma:wc_ctrl_exists_adv_lower_bd_prob}, there exists $\delta' > 0$ s.t. for any stationary controller policy $\Delta:S\ra\cQ$ with $\Delta\in G_{\Delta_k}$ and $y\in C_{\Delta_k}$, there exists $q\in\cP$ such that 
        $$\max_{w\in S}E^{\Delta,q}_w \tau_y \leq \frac{|S|}{\delta'}.$$
    \end{lemma}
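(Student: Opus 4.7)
The plan is to recast the adversary's problem of minimizing the expected hitting time of $y$ as a stochastic shortest path (SSP) problem and invoke the standard positive-cost/SSP dynamic programming theory, exploiting the S-rectangularity of $\cP$ and the compactness of each $\cP_s$ to carry out the measurable selection. Fix $\Delta\in G_{\Delta_k}$ and $y\in C_{\Delta_k}$ throughout.

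First, I would use Lemma~\ref{lemma:wc_ctrl_exists_adv_lower_bd_prob} to construct a history-dependent adversary $\tilde\kappa\in\KH$ whose hitting time of $y$ has uniformly bounded expectation. Partition time into blocks of length $|S|$; within the block beginning at time $b|S|$, let $w_b:=X_{b|S|}$ be the state at the block's start and play the stationary adversary $p^{w_b,y,\Delta_k}$ for the entire block. Since $N^{w_b,y,\Delta_k}\le |S|$ is at most the block length, Lemma~\ref{lemma:wc_ctrl_exists_adv_lower_bd_prob} guarantees that, conditional on entering the block at $w_b$, the probability of hitting $y$ during the block is at least $\delta$. A standard geometric-trials argument then yields $E^{\Delta,\tilde\kappa}_w\tau_y\le |S|/\delta$ for every $w\in S$, and this bound is uniform over $\Delta\in G_{\Delta_k}$ because $\delta$ and $\tilde\kappa$ depend only on $\Delta_k$ and $y$, not on the particular $\Delta$.

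Next, define the adversary-optimal hitting-time value $h(w):=\inf_{\pi\in\KH}E^{\Delta,\pi}_w\tau_y$. The block construction gives $h(w)\le |S|/\delta<\infty$. Standard dynamic-programming pasting then yields the Bellman equation
\[
    h(y)=0,\qquad h(w)=\inf_{q\in\cP_w}\left[\,1+\sum_{s\in S}\left(\sum_{a\in A}\Delta(a|w)\,q(s|a)\right)h(s)\right]\quad (w\ne y).
\]
By compactness of $\cP_w$ and linearity of the bracketed expression in $q$, the infimum is attained at some $q^*_w\in\cP_w$; by the S-rectangularity of $\cP$, the tuple $q^*:=(q^*_w)_{w\in S}$ lies in $\cP$ and defines a valid stationary adversary policy.

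Finally, I would verify that $q^*$ actually attains $h$. Under $P^{\Delta,q^*}_w$, the process $M_t:=h(X_{t\wedge\tau_y})+(t\wedge\tau_y)$ is a nonnegative bounded martingale (boundedness coming from $0\le h\le |S|/\delta$), with $M_0=h(w)$. Since $M_{t\wedge\tau_y}\ge t\wedge\tau_y$, optional stopping at $t\wedge\tau_y$ combined with monotone convergence as $t\to\infty$ yields $E^{\Delta,q^*}_w\tau_y\le h(w)\le |S|/\delta$, so choosing $\delta':=\delta$ completes the proof. The main technical hurdle lies in justifying the Bellman equation and the martingale/optional-stopping step cleanly when $\tau_y$ is only known to possess a finite first moment under $q^*$; both issues are gated by the uniform finite bound on $h$ obtained in the block step, which is precisely where Lemma~\ref{lemma:wc_ctrl_exists_adv_lower_bd_prob} and S-rectangularity are used.
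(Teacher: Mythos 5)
Your argument is essentially correct, but it takes a genuinely different route from the paper's. The paper builds the stationary adversary $q$ explicitly: exploiting S-rectangularity, it stitches the per-state adversaries $p^{w,y,\Delta_k}$ of Lemma \ref{lemma:wc_ctrl_exists_adv_lower_bd_prob} together along non-repeating paths into $y$ (iteratively assigning $q(\cdot|s,\cdot)\in\cP_s$), which yields $\min_{w\in S}P^{\Delta,q}_w(\tau_y\le|S|)\ge\delta':=(\delta|S|^{-N})^{|S|}$, and then converts this uniform hitting probability into the expected-hitting-time bound by a renewal/geometric-tail argument. You instead use the per-state adversaries only to certify finiteness of the optimal first-passage value $h(w)=\inf_{\pi\in\KH}E^{\Delta,\pi}_w\tau_y$ (via the block policy $\tilde\kappa$), then extract a stationary minimizer from the first-passage (SSP) Bellman equation and verify it by a martingale argument. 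This is shorter, replaces the path-stitching construction by an appeal to nonnegative-cost dynamic programming, and even gives the sharper constant $\delta'=\delta$; the paper's construction, on the other hand, is self-contained and produces $q$ without any optimization over $\cP$.

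Two caveats. First, your selection of $q^*_w$ invokes compactness of $\cP_w$, but the lemma is used under both assumptions of Theorem \ref{thm:wc_ctrl}, and under assumption (2) only $\cQ$ is assumed compact; Remark \ref{rmk:cmpct_P_not_needed} explicitly relies on Lemma \ref{lemma:wc_bd_E_hitting_time} holding \emph{without} compactness of $\cP$. As written, your proof does not cover that case. It is easily repaired: pick $q^\epsilon_w\in\cP_w$ attaining the infimum within $\epsilon\in(0,1)$; then $h(X_{t\wedge\tau_y})+(1-\epsilon)(t\wedge\tau_y)$ is a supermartingale under $(\Delta,q^\epsilon)$, giving $E^{\Delta,q^\epsilon}_w\tau_y\le h(w)/(1-\epsilon)\le |S|/\bigl(\delta(1-\epsilon)\bigr)$, so $\delta'=\delta/2$ works — but this patch should be made explicit. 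Second, minor points: $M_t$ is not bounded (the term $t\wedge\tau_y$ is unbounded); boundedness is not needed, since $E[M_t]=M_0$ for the martingale together with $h\ge0$ and monotone convergence suffices, so the claim should simply be dropped. Likewise, the ``standard dynamic-programming pasting'' giving the optimality equation for the nonnegative-cost problem is the load-bearing step of your route and deserves either a citation or the short two-inequality argument (both directions are elementary here because the state space is finite and $h\le|S|/\delta$ by your block construction).
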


With Lemma \ref{lemma:wc_bd_E_hitting_time}, we can prove a uniform upper bound for $\xi_1$. For the convenience of the proceeding proofs, rather than focusing on $s_\vee$ and $y_\wedge$, we will assume that $x\in S$ is an arbitrary initial state and that $y\in 
    C_{\Delta_k}$ is an arbitrary communicating state. 
    
    Since $\epsilon > 0$ can be arbitrarily small, it suffices to choose a $\kappa\in \KH$ (potentially depending on $\Delta_\epsilon,p_\epsilon,x,y$) so that $v_{\gamma}^{\Delta_\epsilon,\kappa}(x)-v_{\gamma}^{\Delta_\epsilon,p_\epsilon}(y)$ is uniformly bounded in $\gamma$. To achieve this, we will use a two-phase adversarial policy similar to that in \citet{bartlett2009WC}. We consider a history-dependent adversary $\kappa = (\kappa_0,\kappa_1,\ds)\in\KH$ defined as follows. Let $g_{t-1} = (s_0,a_0,\ds,s_{t-1},a_{t-1})$ and
    \begin{equation}\label{eqn:to_use_def_kappahd}\kappa_t(s'|g_{t-1},s,a) = \begin{cases}
        q(s'|s,a)&\text{if } s_k\neq y, \forall k\leq t-1\text{ and } s\neq y,\\
        p_\epsilon(s'|s,a) & \text{otherwise.}
    \end{cases}
    \end{equation}
    Here, since $\Delta_\epsilon\in G_{\Delta_k}$ and $y\in C_{\Delta_k}$, we choose $q = q^{y,\Delta_\epsilon}$ defined in Lemma \ref{lemma:wc_bd_E_hitting_time}. In other words, the $\kappa$ uses $q$ when the chain hasn't hit $y$ and uses the $\epsilon$-optimal adversary after hitting $y$. 

    Under this history-dependent adversarial policy, we have for all $x\in S$
    \begin{equation}\label{eqn:to_use_vkappa_hd_bd}
    \begin{aligned}
        v_{\gamma}^{\Delta_\epsilon,\kappa}(x)&= E^{\Delta_\epsilon,\kappa}_x \sum_{k=0}^\infty \gamma^k r(X_k,A_k)\\
        &= E^{\Delta_\epsilon,\kappa}_x \sum_{k=0}^{\tau_y-1} \gamma^k r(X_k,A_k) + E^{\Delta_\epsilon,\kappa}_x \gamma^{\tau_y}\sum_{k=\tau_y}^{\infty} \gamma^{k-\tau_y} r(X_k,A_k)\\
        &\leq E_x^{\Delta_\epsilon,\kappa}\tau_y + E^{\Delta_\epsilon,\kappa}_x \sum_{k=\tau_y}^{\infty} \gamma^{k-\tau_y} r(X_k,A_k).
    \end{aligned}    
    \end{equation}
    Note that by the construction of $\kappa$ in \eqref{eqn:to_use_def_kappahd}, we have
    \begin{equation} \label{eqn:to_use_E_kappa_tau_y}\begin{aligned}
    E_x^{\Delta_\epsilon,\kappa}\tau_y &= \sum_{k=0}^\infty P_x^{\Delta_\epsilon,\kappa}(\tau_y\geq k)\\
    &\stackrel{(i)}{=} \sum_{k=0}^\infty P_x^{\Delta_\epsilon,\kappa}(\tau_y >  k)\\
    &= \sum_{k=0}^\infty E_x^{\Delta_\epsilon,\kappa}\1\set{X_0(\omega),X_1(\omega),\ds,X_k(\omega)\neq y}\\
    &\stackrel{(ii)}{=} \sum_{k=0}^\infty \sum_{g_k = (s_0,a_0,\ds,s_k,a_k)\in\bd G_k}\prod_{j=0}^{k-1}\Delta_\epsilon(a_j|s_j)\kappa_j(s_{j+1}|g_{j-1},s_j,a_j)\1\set{s_0,\ds,s_k\neq y}\\\
    &\stackrel{(iii)}{=} \sum_{k=0}^\infty \sum_{g_k\in\bd G_k}\prod_{j=0}^{k-1}\Delta_\epsilon(a_j|s_j)q(s_{j+1}|s_j,a_j) \1\set{s_0,\ds,s_k\neq y}\\
    &= E_x^{\Delta_\epsilon,q}\tau_y .\end{aligned}
    \end{equation}
    Here, $(i)$ is because $x\neq y$, $(ii)$ follows from the definition of $E_x^{\Delta_\epsilon,\kappa}$, $(iii)$ is because by \eqref{eqn:to_use_def_kappahd}
    $$\kappa_j(s_{j+1}|g_{j-1},s_j,a_j)\1\set{s_0,\ds,s_k\neq y} = \begin{cases}
        0 & \text{if }s_i =  y\text{ for some }i\leq j\\
        q(s_{j+1}|s_j,a_j) &\text{if }s_0,\ds,s_j\neq y 
    \end{cases},$$
    and the last equality follows from reversing the previous steps. 

    On the other hand, we observe $$\begin{aligned}
        &E^{\Delta_\epsilon,\kappa}_x \sum_{k=\tau_y}^{\infty} \gamma^{k-\tau_y} r(X_k,A_k)\\
        &= E^{\Delta_\epsilon,\kappa}_x \sum_{j=1}^\infty\sum_{k=j}^{\infty} \1\set{\tau_y = j}\gamma^{k-\tau_y} r(X_k,A_k)\\
        &\stackrel{(i)}{=} \sum_{j=1}^\infty\sum_{k=j}^{\infty}  E^{\Delta_\epsilon,\kappa}_x  \1\set{\tau_y = j}\gamma^{k-j}r(X_k,A_k)\\
        &= \sum_{j=1}^\infty\sum_{k=j}^{\infty}  \sum_{g_k\in \bd{G}_k}\Delta_\epsilon(a_j|s_j)\kappa_j(s_{j+1}|g_{j-1},s_j,a_j)\1\set{s_0,\ds,s_{j-1}\neq y, s_j = y}  \gamma^{k-j}r(s_k,a_k)\\
        &\stackrel{(ii)}{=} \sum_{j=1}^\infty\sum_{k=j}^{\infty}  \sum_{g_k\in \bd{G}_k}\Delta_\epsilon(a_j|s_j)p_\epsilon(s_{j+1}|,s_j,a_j)\1\set{s_0,\ds,s_{j-1}\neq y, s_j = y}  \gamma^{k-j}r(s_k,a_k)
    \end{aligned}$$
    Here $(i)$ applies Fubini's theorem leveraging the positivity of the summand, $(ii)$ follows from the definition of $\kappa$ in \eqref{eqn:to_use_def_kappahd}. From $(ii)$, reversing the previous equalities, we have that 
    \begin{equation}\label{eqn:to_use_second_half_eq}\begin{aligned}
        E^{\Delta_\epsilon,\kappa}_x \sum_{k=\tau_y}^{\infty} \gamma^{k-\tau_y} r(X_k,A_k)&= E^{\Delta_\epsilon,p_\epsilon}_x \sum_{k=\tau_y}^{\infty} \gamma^{k-\tau_y} r(X_k,A_k)\\
        &= E^{\Delta_\epsilon,p_\epsilon}_x E^{\Delta_\epsilon,p_\epsilon}_x \sqbkcond{\sum_{k=\tau_y}^{\infty} \gamma^{k-\tau_y} r(X_k,A_k)}{\cH_{\tau_y}}\\
        &\stackrel{(i)}{=} E^{\Delta_\epsilon,p_\epsilon}_y \sum_{k=0}^{\infty} \gamma^{k} r(X_k,A_k)\\
        &= v_\gamma^{\Delta_\epsilon,p_\epsilon}(y).
    \end{aligned}
    \end{equation}
    where $(iv)$ is because $p_\epsilon$ is a stationary policy and, under $E_x^{\Delta_\epsilon,p_\epsilon}$, $\set{X_t,A_t:t\geq 0}$ is strong Markov. 

    Therefore, the bound in \eqref{eqn:to_use_vkappa_hd_bd} and equalities \eqref{eqn:to_use_second_half_eq} and \eqref{eqn:to_use_E_kappa_tau_y} imply that for all $x\in S$
    \begin{equation}\label{eqn:wc_vx-vy_ub}
    v_{\gamma}^{\Delta_\epsilon,\kappa}(x)-v_{\gamma}^{\Delta_\epsilon,p_\epsilon}(y) \leq E_x^{\Delta_\epsilon,q}\tau_y + v_\gamma^{\Delta_\epsilon,p_\epsilon}(y) -v_{\gamma}^{\Delta_\epsilon,p_\epsilon}(y) \leq \frac{|S|}{\delta'} .
    \end{equation}
    where the last inequality follows from Lemma \ref{lemma:wc_bd_E_hitting_time} and $y\in C_{\Delta_k}$. In particular, since $x\in S$ and $y\in C_{\Delta_k}$ in \eqref{eqn:wc_vx-vy_ub} are arbitrary, 
    \begin{equation}\label{eqn:to_use_xi1_ub_final}
       \xi_1 =  v_{\gamma}^{\Delta_\epsilon,\kappa}(s_\vee)- v_{\gamma}^{\Delta_\epsilon,\kappa}(y_\wedge)\leq \frac{|S|}{\delta'}.  
    \end{equation}

\begin{remark}\label{rmk:cmpct_P_not_needed_2}
As noted in Remark \ref{rmk:cmpct_P_not_needed}, the compactness of the adversarial ambiguity set $\cP$ is not required for Lemma \ref{lemma:wc_bd_E_hitting_time}, nor is it used in the proof of the upper bound for $\xi_1$. Hence, $\xi_1$ admits the same upper bound without assuming compactness of $\cP_s$ for $s\in S$. Therefore, we conclude the existence of a solution 

In contrast, compactness is essential for our proof of the following upper bound on $\xi_2$, since that argument relies on the properties of $K_\Delta$.
\end{remark}

\subsection*{Upper-Bounding $\xi_2$}

Similar to $\xi_1$, we will upper bound $\xi_2$ by the expected hitting time of $C_{\Delta_k}$. Specifically, let $T_k = \inf\set{t\geq 0:X_{t} \in C_{\Delta_k}} $. By definition,
$$v_\gamma^{\Delta_\epsilon,p_\epsilon}(s_\wedge) = E_{s_\wedge}^{\Delta_\epsilon,p_\epsilon}\sum_{t=0}^{T_k-1}\gamma^t r(X_t,A_t) + E_{s_\wedge}^{\Delta_\epsilon,p_\epsilon}\sum_{t=T_k}^{\infty}\gamma^t r(X_t,A_t)$$

Note that by the same argument as in \eqref{eqn:to_use_second_half_eq}
\begin{align*}
E_{s_\wedge}^{\Delta_\epsilon,p_\epsilon}\sum_{t=T_k}^{\infty}\gamma^t r(X_t,A_t)&= E^{\Delta_\epsilon,p_\epsilon}_{s_\wedge} \gamma ^{T_k}E^{\Delta_\epsilon,p_\epsilon}_{s_\wedge} \sqbkcond{\sum_{t=T_k}^{\infty} \gamma^{t-T} r(X_t,A_t)}{\cH_{T_k}}\\
&= E^{\Delta_\epsilon,p_\epsilon}_{s_\wedge} \gamma ^{T_k} v_\gamma^{\Delta_\epsilon,p_\epsilon}(X_{T_k})\\
&\geq v_\gamma^{\Delta_\epsilon,p_\epsilon}(y_\wedge) E^{\Delta_\epsilon,p_\epsilon}_{s_\wedge} \gamma^{T_k} 
\end{align*}
where the last inequality follows from the choice of $y_\wedge$ in \eqref{eqn:to_use_def_y_wedge}. Therefore, 
$v_\gamma^{\Delta_\epsilon,p_\epsilon}(s_\wedge)  \geq v_\gamma^{\Delta_\epsilon,p_\epsilon}(y_\wedge) E^{\Delta_\epsilon,p_\epsilon}_{s_\wedge} \gamma ^{T_k} $ and hence
\begin{equation}\label{eqn:to_use_vywedge-vswedge_ub}
\begin{aligned}v_\gamma^{\Delta_\epsilon,p_\epsilon}(y_\wedge) - v_\gamma^{\Delta_\epsilon,p_\epsilon}(s_\wedge)
&\leq v_\gamma^{\Delta_\epsilon,p_\epsilon}(y_\wedge)  E^{\Delta_\epsilon,p_\epsilon}_{s_\wedge}[1-\gamma^{T_k}]\\
&=  (1-\gamma)v_\gamma^{\Delta_\epsilon,p_\epsilon}(y_\wedge)  E^{\Delta_\epsilon,p_\epsilon}_{s_\wedge}\frac{1-\gamma^{T_k}}{1-\gamma}\\
&\stackrel{(i)}{\leq} E^{\Delta_\epsilon,p_\epsilon}_{s_\wedge}\sum_{t=0}^{T_k-1}\gamma^t\\
&\leq E^{\Delta_\epsilon,p_\epsilon}_{s_\wedge} T_k
\end{aligned}
\end{equation}
where $(i)$ follows from $0\leq r\leq 1$ and hence $0\leq v_\gamma^{\Delta_\epsilon,p_\epsilon} \leq 1/(1-\gamma)$ as well as $\sum_{t=0}^{k-1} \gamma^t= (1-\gamma^k)/(1-\gamma). $


Note that $E^{\Delta_\epsilon,p_\epsilon}_{s_\wedge} T_k$ implicitly depends on $\gamma$ via $\Delta_\epsilon$ and $p_\epsilon$. To provide a uniform upper bound, we consider $$E^{\Delta_\epsilon,p_\epsilon}_{s_\wedge} T_k \leq \max_{k\leq |B|}\sup_{\Delta\in \overline{G_{\Delta_k}},\;p\in\cP}E_{s_\wedge}^{\Delta,p}T_{k} \leq \max_{k\leq |B|}\sup_{\Delta\in \overline{G_{\Delta_k}},\;p\in\cP}\max_{x\in C_{\Delta_k}^c}E_{x}^{\Delta,p}T_{k},$$
where the last equality follows from the fact that if $x\in C_{\Delta_k}$, $T_k = 0$ w.p.1.

So, we only need to consider $k\leq |B|$ with $C^c_{\Delta_k}\neq \varnothing$. To proceed, we recall the properties of $K_{\Delta_k}$ defined by \eqref{eqn:to_use_within_K_eval_bd}. Moreover, by construction $\overline{G_{\Delta}}\subseteq \overline{O_\Delta}\cap \overline{K_\Delta}\subseteq \overline{K_\Delta}$. So, we have that by Lemma \ref{lemma:construct_K_Delta}, for all $\Delta\in \overline{G_{\Delta_k}}$, $$\sup_{p\in\cP} e^\top (I-M_{\Delta,p}^{\Delta_k})\inv e \leq  \sup_{p\in\cP}e^\top(I-M_{\Delta_k,p}^{\Delta_k})\inv e + 1  < \infty. $$
Therefore, by the first transition analysis argument, 
\begin{align*}
    \max_{k\leq |B|}\sup_{\Delta\in \overline{G_{\Delta_k}},\;p\in\cP}\max_{x\in C_{\Delta_k}^c} E_x ^{\Delta,p} T_k 
    &= \max_{k\leq |B|}\sup_{\Delta\in \overline{G_{\Delta_k}},\;p\in\cP}\max_{x\in C_{\Delta_k}^c} [(I-M_{\Delta,p}^{\Delta_k})\inv e ](x)\\
    &\leq \max_{k\leq |B|}\sup_{\Delta\in \overline{G_{\Delta_k}}}\sup_{p\in\cP}e^\top (I-M_{\Delta,p}^{\Delta_k})\inv e \\
    &\leq 1 + \sum_{k\leq |B|}\sup_{p\in\cP}e^\top (I-M_{\Delta_k,p}^{\Delta_k})\inv e,
\end{align*} 
where the expression in the last line is finite (by Lemma \ref{lemma:construct_K_Delta}) and independent of $\epsilon$ and $\gamma$. Therefore, going back to \eqref{eqn:to_use_vywedge-vswedge_ub}, we conclude that

\begin{equation}\label{eqn:to_use_xi2_ub_final}\xi_2 = v_\gamma^{\Delta_\epsilon,p_\epsilon}(y_\wedge) - v_\gamma^{\Delta_\epsilon,p_\epsilon}(s_\wedge)
\leq E^{\Delta_\epsilon,p_\epsilon}_{s_\wedge} T_k \leq 1 + \sum_{k\leq |B|}\sup_{p\in\cP}e^\top (I-M_{\Delta_k,p}^{\Delta_k})\inv e \end{equation}
is uniformly bounded in $\epsilon$ and $\gamma$. 

\subsection*{Concluding Theorem \ref{thm:wc_ctrl}}
Combining \eqref{eqn:to_use_vspan_decomp}, \eqref{eqn:to_use_xi1_ub_final}, and \eqref{eqn:to_use_xi2_ub_final} yields
$$\spnorm{v_\gamma^*}\leq \frac{|S|}{\delta'} + 1 + \sum_{k\leq |B|}\sup_{p\in\cP}e^\top (I-M_{\Delta_k,p}^{\Delta_k}) + 2\epsilon. $$
By Lemmas \ref{lemma:wc_bd_E_hitting_time} and \ref{lemma:construct_K_Delta} and $\epsilon>0$ can be arbitrarily small, it follows that $\spnorm{v_\gamma^*}$ is uniformly bounded for all $\gamma\in(0,1)$. Together with Theorem \ref{thm:existence_of_sol}, this establishes Theorem \ref{thm:wc_ctrl}.

\section{Bellman Optimality for the HD-S Case}\label{sec:HD_S}
\newcommand{\piRL}{\pi_{\mathrm{RL}}}

As noted in Remark \ref{rmk:left_out_HDS}, in this section, we show a surprising result that, under a weak communication assumption, the average reward for a history-dependent controller against a stationary adversary corresponds to the solution of the Bellman equation with the inf-sup ordering \eqref{eqn:inf_sup_eqn_const_gain}, rather than its original form. Moreover, $\epsilon$-optimal average rewards can be attained by online reinforcement learning (RL) policies, which are inherently history-dependent.

\begin{proposition}    
\label{prop:hd_eps_opt_policy}
If $\set{\delta_a:a\in A}\subseteq\cQ$ and the adversary is weakly communicating, then for each $\epsilon > 0$, there exists a history-dependent RL policy $\piRL\in \PiH$ s.t. 
$$ 0 \leq \underline{\alpha}(\mu,\PiH,\KS) - \inf_{\kappa\in\KS}\underline{\alpha} (\mu,\piRL,\kappa) \stackrel{(ii)} {\leq}\inf_{\kappa\in\KS}\sup_{\pi\in\PiS}\underline{\alpha}(\mu,\pi,\kappa) - \inf_{\kappa\in\KS}\underline{\alpha} (\mu,\piRL,\kappa) \leq \epsilon. $$
The same result holds true if $\underline{\alpha}$ is replaced with $\overline{\alpha}$. 
\end{proposition}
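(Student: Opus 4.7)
The plan exploits the information asymmetry of the HD--S case: once the adversary commits to a stationary $\kappa=(p,p,\ldots)\in\KS$, the controller faces a single unknown but classical finite MDP with kernel $p$. A history-dependent online learning policy $\piRL$ can progressively identify $p$ and attain $\alpha^*(p):=\sup_{\pi\in\PiS}\underline{\alpha}(\mu,\pi,p)$ for every $p\in\cP$.

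\textbf{Steps.} First, I would reduce the problem to a family of classical MDPs: for each fixed $p\in\cP$, combining the adversarial weak-communication hypothesis (Definition~\ref{def:wc}) with $\{\delta_a:a\in A\}\subseteq\cQ$ implies $(S,A,p,r)$ is a classical weakly communicating MDP (any positive-probability path under a mixed stationary controller admits a deterministic reduction with the same positive probability). By standard theory \citep{puterman2014MDP},
$$\alpha^*(p)\;:=\;\sup_{\pi\in\PiS}\underline{\alpha}(\mu,\pi,p)\;=\;\sup_{\pi\in\PiH}\underline{\alpha}(\mu,\pi,p)$$
is state-independent, so $\inf_{\kappa\in\KS}\sup_{\pi\in\PiS}\underline{\alpha}(\mu,\pi,\kappa)=\inf_{p\in\cP}\alpha^*(p)$; this identity also yields the middle inequality of the proposition via the weak-duality relation $\sup_{\pi\in\PiH}\inf_{\kappa\in\KS}\leq\inf_{\kappa\in\KS}\sup_{\pi\in\PiH}$. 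Second, I would construct $\piRL$ as a UCRL/REGAL-style online learner for weakly communicating MDPs \citep[cf.][]{bartlett2009WC}: at each step, $\piRL$ uses the observed history to update empirical estimates of $p$ and plays the action of an optimistic planner; only deterministic actions are required, which are available since $\{\delta_a\}\subseteq\cQ$, so indeed $\piRL\in\PiH$. For any fixed $p\in\cP$, such a learner enjoys sublinear pseudo-regret
$$R_p(n)\;:=\;n\alpha^*(p)-E_\mu^{\piRL,p}\sum_{k=0}^{n-1}r(X_k,A_k)\;=\;o(n).$$
Third, dividing by $n$ and taking $\liminf_n$ yields $\underline{\alpha}(\mu,\piRL,p)\geq\alpha^*(p)$ for every $p$; taking $\inf_{p}$ of both sides (no limit interchange is required since the bound is pointwise) gives
$$\inf_{\kappa\in\KS}\underline{\alpha}(\mu,\piRL,\kappa)\;\geq\;\inf_{p\in\cP}\alpha^*(p),$$
which is the rightmost bound of the proposition (with $\epsilon=0$ in the limit). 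The leftmost inequality is immediate from $\piRL\in\PiH$, and the $\overline{\alpha}$ analogue follows from $\overline{\alpha}\geq\underline{\alpha}$ combined with the classical fact that $\sup_{\pi}\overline{\alpha}(\mu,\pi,p)=\sup_\pi\underline{\alpha}(\mu,\pi,p)=\alpha^*(p)$ in weakly communicating MDPs.

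\textbf{Main obstacle.} The delicate part is the construction in Step~2. The set $\cP$ may be uncountable and non-compact (no compactness is assumed in the proposition), and the learner must achieve sublinear regret \emph{pointwise} for every $p\in\cP$ without prior knowledge of model-complexity parameters such as the bias span or diameter, which vary over $\cP$. This is precisely what UCRL-type routines with online span/diameter estimation are designed to handle in classical weakly communicating MDPs; crucially, no uniformity in $p$ is required, because the outer $\inf_p$ sits outside the $\liminf_n$. The explicit $\epsilon$ in the statement absorbs any approximate-greedy tolerance of the planner and the initial exploration burn-in used by the RL routine.
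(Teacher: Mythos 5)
Your proposal follows essentially the same route as the paper's proof: use weak duality plus classical average-reward MDP theory (stationary/Markov optimality and constant gain for each weakly communicating kernel $p\in\cP$) to reduce the middle quantity to the per-kernel optimal gain $\alpha^*_\kappa$, then deploy a single history-dependent online learner $\pi_{\mathrm{RL}}$ that uses only deterministic actions (available since $\set{\delta_a:a\in A}\subseteq\cQ$) and has sublinear regret against each fixed stationary $\kappa\in\KS$, and finally pass to the Ces\`aro limit pointwise in $\kappa$ before taking the infimum. The one place where your argument is weaker than the paper's, and it is the actual crux, is the algorithmic citation in your Step 2: REGAL \citep{bartlett2009WC} requires a known upper bound on the optimal bias span, and UCRL2-type analyses require a finite diameter (communicating) model, so neither delivers off the shelf sublinear \emph{expected} regret simultaneously for every weakly communicating $p\in\cP$ with no prior knowledge of $\spnorm{h^*_\kappa}$ or the diameter --- which is what your claimed ``$\epsilon=0$ in the limit'' conclusion needs; your appeal to ``UCRL-type routines with online span/diameter estimation'' is exactly the nontrivial ingredient left unproved. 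The paper instead invokes Algorithm 2 of \citet{zhang2023amdp_regret}, whose guarantee is a high-probability regret bound with confidence parameter tied to $\epsilon$; converting that to an expectation bound (rewards in $[0,1]$ on the failure event) leaves an additive $\epsilon$, which is precisely why the proposition carries an $\epsilon$ slack rather than an exact identity. If you replace the REGAL/UCRL appeal with a concrete parameter-free guarantee for weakly communicating MDPs and keep the $\epsilon$ to absorb the failure probability, your argument coincides with the paper's.
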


Note that in the second inequality, we swap to inf-sup. The proof of Proposition \ref{prop:hd_eps_opt_policy} is deferred to Appendix \ref{sec:proof_prop_RL_policy}, where we instantiate the RL policy $\pi_{\mrm{RL}}$ with the online algorithm in \citet{zhang2023amdp_regret}. This choice is illustrative rather than exclusive: any online RL algorithm that (i) uses only deterministic actions (consistent with the assumption $\{\delta_a:a\in A\}\subseteq\cQ$) and (ii) achieves sublinear regret can be employed to obtain $\epsilon$-optimality for any prescribed $\epsilon>0$.

\begin{theorem}\label{thm:value_HD_S}
If $\set{\delta_a:a\in A}\subseteq\cQ$, then so long as the adversary is weakly communicating, $$\underline{\alpha}(\mu,\PiH,\KS) = \inf_{\kappa\in\KS}\sup_{\pi\in\PiH}\underline\alpha(\mu,\pi,\kappa)=\inf_{\kappa\in\KS}\sup_{\pi\in\PiS}\underline\alpha(\mu,\pi,\kappa).$$
The same result holds true if $\underline{\alpha}$ is replaced with $\overline{\alpha}$. Moreover, if \eqref{eqn:inf_sup_eqn_const_gain} admits a solution pair $(u',\alpha')$, then 
$$\underline{\alpha}(\mu,\PiH,\KS) \;=\; \overline{\alpha}(\mu,\PiH,\KS) \;=\; \alpha'.$$ 
\end{theorem}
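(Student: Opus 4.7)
I would split the argument into two stages: (i) the three-way equality linking $\underline\alpha(\mu,\PiH,\KS)$, $\inf_{\kappa\in\KS}\sup_{\pi\in\PiH}\underline\alpha$, and $\inf_{\kappa\in\KS}\sup_{\pi\in\PiS}\underline\alpha$, which is a consequence of Proposition~\ref{prop:hd_eps_opt_policy} combined with a classical MDP reduction; and (ii) the identification with $\alpha'$, which uses the inf-sup Bellman equation \eqref{eqn:inf_sup_eqn_const_gain} to bracket the game value via a pair of standard telescoping arguments.

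For stage (i), weak duality and $\PiS\subseteq\PiH$ give the trivial one-sided bounds $\underline\alpha(\mu,\PiH,\KS)\le \inf_{\kappa\in\KS}\sup_{\pi\in\PiH}\underline\alpha$ and $\inf_{\kappa\in\KS}\sup_{\pi\in\PiS}\underline\alpha\le \inf_{\kappa\in\KS}\sup_{\pi\in\PiH}\underline\alpha$. Proposition~\ref{prop:hd_eps_opt_policy} furnishes $\piRL\in\PiH$ satisfying $\inf_{\kappa\in\KS}\sup_{\pi\in\PiS}\underline\alpha(\mu,\pi,\kappa)\le \inf_{\kappa\in\KS}\underline\alpha(\mu,\piRL,\kappa)+\epsilon \le \underline\alpha(\mu,\PiH,\KS)+\epsilon$, so letting $\epsilon\downarrow 0$ forces $\underline\alpha(\mu,\PiH,\KS)=\inf_{\kappa\in\KS}\sup_{\pi\in\PiS}\underline\alpha$. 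To upgrade the inner supremum from $\PiS$ to $\PiH$, I would fix an arbitrary stationary $p\in\cP$ and note that the controller then faces a classical finite state-action MDP; since $\set{\delta_a:a\in A}\subseteq\cQ$, the standard average-reward dynamic programming theorem (e.g. Puterman, Ch.~8--9) gives $\sup_{\pi\in\PiH}\underline\alpha(\mu,\pi,(p,p,\ldots))=\sup_{\pi\in\PiS}\underline\alpha(\mu,\pi,(p,p,\ldots))$, and then taking $\inf$ over $\KS$ closes the loop. The $\overline\alpha$ case is identical because Proposition~\ref{prop:hd_eps_opt_policy} applies to both $\overline\alpha$ and $\underline\alpha$.

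For stage (ii), the upper bound $\overline\alpha(\mu,\PiH,\KS)\le\alpha'$ is obtained by selecting, for each $\epsilon>0$, a stationary adversary $p_\epsilon\in\cP$ that is $\epsilon$-optimal state-wise in \eqref{eqn:inf_sup_eqn_const_gain}, namely $u'(s)\ge \sup_{\phi\in\cQ}E_{\phi,p_{\epsilon,s}}[r(s,A_0)-\alpha'+u'(X_1)]-\epsilon$. For any $\pi\in\PiH$, conditioning on $\cH_t$ yields the drift inequality $E_\mu^{\pi,p_\epsilon}[r(X_t,A_t)]\le \alpha'+\epsilon+E_\mu^{\pi,p_\epsilon}[u'(X_t)-u'(X_{t+1})]$; Cesaro-averaging and boundedness of $u'$ give $\overline\alpha(\mu,\pi,p_\epsilon)\le \alpha'+\epsilon$, and then $\epsilon\downarrow 0$. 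For the lower bound, fix any stationary $p\in\cP$; since $\set{\delta_a:a\in A}\subseteq\cQ$, linearity of $E_{\phi,p_s}[\cdot]$ in $\phi$ gives $\sup_{\phi\in\cQ}E_{\phi,p_s}[\cdot]=\max_{a\in A}E_{\delta_a,p_s}[\cdot]$, which by \eqref{eqn:inf_sup_eqn_const_gain} dominates $u'(s)$. A stationary deterministic greedy selector $\Delta^p(s)\in\argmax{a\in A}\set{r(s,a)-\alpha'+\sum_{s'}p(s'|s,a)u'(s')}$ then satisfies the reverse telescoping bound, yielding $\underline\alpha(\mu,\Delta^p,(p,p,\ldots))\ge \alpha'$ and hence $\sup_{\pi\in\PiS}\underline\alpha(\mu,\pi,p)\ge\alpha'$ for every $p$. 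Taking $\inf$ over $\KS$ and combining with stage (i) and $\underline\alpha\le\overline\alpha$ squeezes $\alpha'\le\underline\alpha(\mu,\PiH,\KS)\le\overline\alpha(\mu,\PiH,\KS)\le\alpha'$.

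\textbf{Main obstacle.} The crux is the inequality $\inf_{\kappa\in\KS}\sup_{\pi\in\PiS}\underline\alpha\le\underline\alpha(\mu,\PiH,\KS)+\epsilon$ supplied by Proposition~\ref{prop:hd_eps_opt_policy}; this is what breaks the usual weak-duality slack, and it is the sole place where the adversary's weak-communication hypothesis is genuinely invoked. The telescoping arguments in stage (ii) are routine once a solution to \eqref{eqn:inf_sup_eqn_const_gain} is in hand; the only delicate point is that the $\epsilon$-optimal $p_\epsilon$ must be chosen independently of the controller $\pi$ for the upper bound to hold uniformly over $\PiH$, which is automatic because the inf in \eqref{eqn:inf_sup_eqn_const_gain} is taken \emph{before} the sup.
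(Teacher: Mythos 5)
Your proposal is correct and follows essentially the same route as the paper: stage (i) is exactly the paper's argument (Proposition \ref{prop:hd_eps_opt_policy} combined with classical stationary/Markov optimality against a fixed stationary adversary), and stage (ii) simply writes out explicitly the mirror of the drift/telescoping argument from Theorem \ref{thm:general_dpp} that the paper invokes by reference. Your use of a greedy deterministic selector for the lower bound (enabled by $\set{\delta_a:a\in A}\subseteq\cQ$) in place of a generic $\epsilon$-optimal stationary response is only a minor, harmless variant.
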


Intuitively, when the stationary adversary is weakly communicating, a history-dependent controller policy can adaptively “learn’’ the adversary policy through online reinforcement learning. Importantly, this learning process doesn't affect the long-run average performance of the controller policy, hence achieves the inf-sup value. 

In particular, this implies that if solutions exist for \eqref{eqn:r_bellman_eqn_const_gain} and \eqref{eqn:inf_sup_eqn_const_gain}, but the corresponding gains $\alpha^*$ and $\alpha'$ do not coincide, then stationary optimality cannot be expected for the robust optimal control problems $\underline{\alpha}(\mu,\PiH,\KS)$ and $\overline{\alpha}(\mu,\PiH,\KS)$. A converse of this is also true. This is summarized in the following Corollary \ref{cor:iff}. 


\begin{corollary} \label{cor:iff}
    Assume that both the controller and the adversary are weakly communicating, $\set{\delta_a:a\in A}\subseteq\cQ$, and that $\cQ$ and $\cP_s,\;s \in S$ are compact. If the adversary’s policy class is stationary, i.e., $\mrm K = \KS$, then stationary policies are optimal for a history-dependent controller $\Pi = \PiH$ if and only if $\alpha' = \alpha^*$.
\end{corollary}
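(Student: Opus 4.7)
The plan is to express both sides of the asserted equivalence as scalars, using the machinery already developed, so that the corollary reduces to a direct identification. Concretely, I would produce $\alpha^*$ and $\alpha'$ from the two existence theorems, match each of them with an appropriate optimal value in the HD–S setting, and then read off the equivalence.

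First, I would record that both $\alpha^*$ and $\alpha'$ are well-defined under the standing hypotheses. Since the controller is weakly communicating and $\cQ$ and each $\cP_s$ are compact, Theorem \ref{thm:wc_ctrl} supplies a solution $(u^*,\alpha^*)$ of \eqref{eqn:r_bellman_eqn_const_gain}, and the uniqueness of the gain asserted at the end of Theorem \ref{thm:general_dpp} makes $\alpha^*$ intrinsic to the instance. Symmetrically, since the adversary is weakly communicating, Theorem \ref{thm:wc_adv} supplies a solution $(u',\alpha')$ of \eqref{eqn:inf_sup_eqn_const_gain}, and the conclusion of Theorem \ref{thm:value_HD_S} identifies $\alpha' = \overline{\alpha}(\mu,\PiH,\KS)$ for every $\mu$, so $\alpha'$ is also intrinsic.

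Next I would perform the two identifications. On the stationary-controller side, Theorem \ref{thm:general_dpp} gives $\overline{\alpha}(\mu,\PiS,\KS) = \underline{\alpha}(\mu,\PiS,\KS) = \alpha^*$ for every $\mu \in \cP(\cS)$. On the history-dependent-controller side, the hypothesis $\{\delta_a:a\in A\} \subseteq \cQ$ together with weak communication of the adversary puts us in the setting of Theorem \ref{thm:value_HD_S}, yielding $\overline{\alpha}(\mu,\PiH,\KS) = \underline{\alpha}(\mu,\PiH,\KS) = \alpha'$. By construction the asserted optimality of stationary policies for a history-dependent controller against $\KS$ means exactly $\overline{\alpha}(\mu,\PiS,\KS) = \overline{\alpha}(\mu,\PiH,\KS)$ (equivalently, stationary $\epsilon$-optimality for every $\epsilon>0$), which, in view of the two identifications, is equivalent to $\alpha^* = \alpha'$.

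I do not anticipate a serious obstacle: the corollary is essentially a packaging result that chains the earlier theorems together. The only point worth underlining in the writeup is that weak duality always gives $\alpha^* \le \alpha'$, so the content of the iff is to rule out a strict gap $\alpha^* < \alpha'$ that would be caused by the controller's informational advantage in the HD–S regime; when the two gains coincide, this informational advantage is nullified and any near-optimal stationary policy extracted from \eqref{eqn:r_bellman_eqn_const_gain} (via Theorem \ref{thm:policy_opt_gap} and Remark \ref{rmk:policy_from_bellman}) is near-optimal for the HD controller as well.
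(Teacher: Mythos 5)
Your proposal is correct and follows essentially the same route as the paper's proof: invoke Theorems \ref{thm:wc_ctrl} and \ref{thm:wc_adv} for existence of $(u^*,\alpha^*)$ and $(u',\alpha')$, then identify $\alpha^* = \overline{\alpha}(\mu,\PiS,\KS) = \underline{\alpha}(\mu,\PiS,\KS)$ via Theorem \ref{thm:general_dpp} and $\alpha' = \overline{\alpha}(\mu,\PiH,\KS) = \underline{\alpha}(\mu,\PiH,\KS)$ via Theorem \ref{thm:value_HD_S}, so that stationary optimality for the HD controller against $\KS$ is equivalent to $\alpha' = \alpha^*$. The added remarks on weak duality and on extracting a near-optimal stationary policy are fine but not needed.
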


Below, we provide an example illustrating the case $\alpha^*\neq\alpha'$. Our example is adapted from \citet{wang_foundation_2024}, which is visualized in Figure \ref{example:HD-S}. 
\begin{figure}[htb]
	\centering
	\subfigure[$p^{(1)}$]{
		\scalebox{0.8}{
			\begin{tikzpicture}
				\node[state]                               (A) {\text{I}};
				\node[state,right=of A,yshift=1cm,xshift=0.5cm]                   (B) {G};
				\node[state,right=of A,yshift=-1cm,xshift=0.5cm]                    (C) {B};

				\path(A)   edge[blue] node[above] {}  (B);
				
				\path (A)   edge[red] node[above] {}   (C);
				\path 	(B.north)   edge[bend right=60]    (A.north);
				\path 	(C.south)   edge[bend left=60]    (A.south);

	\end{tikzpicture}}} \hspace{1in}
	\subfigure[$p^{(2)}$]{
		\scalebox{0.8}{
			\begin{tikzpicture}
				\node[state]                               (A) {\text{I}};
				\node[state,right=of A,yshift=1cm,xshift=0.5cm]                   (B) {G};
				\node[state,right=of A,yshift=-1cm,xshift=0.5cm]                    (C) {B};

				\path(A)   edge[red] node[above] {}  (B);
				
				\path (A)   edge[blue] node[above] {}   (C);
				\path 	(B.north)   edge[bend right=60]    (A.north);
				\path 	(C.south)   edge[bend left=60]    (A.south);

	\end{tikzpicture}}} 
	\caption{ A robust MDP example of the case $\alpha^*\neq\alpha'$, where the red
		line and the blue line represent actions $a_1$ and $a_2$, respectively.}
  \label{example:HD-S}
\end{figure}
\par
The state space is $S = \set{\text{I},\text{G},\text{B}}$, where I stands for the initial state, G is the good state, and B is a bad state. We consider the following reward function $r$, which does not depend on the control actions 
$$r(\text{I})=0,\; r(\text{G})=1,\;r(\text{B})=-1.
$$

In I, two actions ${A} = \{a_1,a_2\}$ lead to different dynamics, whereas in G and B, taking different two actions will not change the dynamics. In particular, in state I, the S-rectangular adversary can choose from $\cP_\text{I}:=\set{p_\text{I}^{(1)},p_\text{I}^{(2)}}$. In kernel form,
$$
	p^{(1)}(\text{B}|\text{I},a_{1}) =1,\; p^{(1)}(\text{G}|\text{I},a_{2})=1,  \text{ and }
	p^{(2)}(\text{G}|\text{I},a_{1}) =1, p^{(2)}(\text{B}|\text{I},a_{2})=1.
$$
Here, \( p^{(1)}(B|\text{I},a_{1}) =1 \) means that in state \(\text{I}\), if the controller selects action \( a_1 \) and the adversary chooses \( p_\text{I}^{(1)} \), the MDP transitions to state B with probability 1. The other transition probabilities are interpreted in the same way. We assume $\cQ = \cP(\cA)$.

Note that in this robust MDP instance, both the controller and the adversary are communicating.  
Specifically, fixing any action (or randomized policy) chosen by the controller, state G (respectively B) can reach state B (respectively G) under at least one of the kernels $p^{(i)}$, $i=1,2$, while state I is always recurrent.  
Similarly, fixing any adversarial kernel $p^{(i)}$, $i=1,2$, there always exists a control action that enables a transition from G (B) to B (G).

In this example, it is not hard to see that if the controller can only use stationary policies and the adversary plays second, the best stationary strategy of the controller is to randomize the two actions with probability $1/2$. Therefore, one can easily verify that $\overline\alpha(\text I,\PiS,\KS) = 0$, and a solution to the robust Bellman equation (\ref{eqn:r_bellman_eqn_const_gain}) is
$$
\alpha^*=0,\;u^*(\text{I})=0,\;u^*(\text{G})=1,\;u^*(\text{B})=-1.
$$
In particular, Theorem \ref{thm:general_dpp} holds and  
    $$
    \begin{aligned}
    0&=\overline\alpha(\text{I},\PiH,\KH) = \underline\alpha(\text{I},\PiH,\KH) \\
    &=\overline\alpha(\text{I},\PiS,\KH) = \underline\alpha(\text{I},\PiS,\KH) =\overline\alpha(\text{I},\PiS,\KS) = \underline\alpha(\text{I},\PiS,\KS). 
    \end{aligned}$$
    
On the other hand, if the controller plays second, it is always able to exploit the knowledge of the adversary's choice $p^{(i)}$, and counter with $a_{(i\text{ mod }2)+1}$. So, one would expect that $\alpha' =0.5$, as, in the long run, the Markov chain will spend half of the time in state I and the other half of the time in G. With this intuition, it is not hard to verify that the inf-sup equation  \eqref{eqn:inf_sup_eqn_const_gain} is solved by
$$
\alpha'=0.5,\;u'(\text{I})=0,\;u'(\text{G})=0.5,\;u'(\text{B})=-1.5.
$$
Then, Theorem \ref{thm:value_HD_S} indicates $\underline{\alpha}(\text{I},\PiH,\KS) = \overline{\alpha}(\text{I},\PiH,\KS) = 0.5.$ In particular, as Corollary \ref{cor:iff} suggests, stationary policies cannot be optimal for the controller for this MDP instance.

Moreover, we can construct a simple optimal history-dependent controller policy that achieves the optimal gain of $0.5$ as follows.  
At time $0$, starting from the initial state I, the controller selects an arbitrary action, say $a_2$.

If state G is observed at time $1$, the controller can infer that the adversary has selected kernel $p^{(1)}$. Consequently, the controller continues to choose $a_2$ for all subsequent time steps. This induces a deterministic Markov chain alternating between I and G, yielding a long-run average reward of $0.5$.

Conversely, if state B is observed at time $1$, the controller infers that the adversary has selected kernel $p^{(2)}$. The controller then switches to action $a_1$ for all subsequent time steps, again inducing a deterministic Markov chain alternating between I and G, and thus achieving the same long-run average reward of $0.5$.

\bibliographystyle{apalike}
\bibliography{proof_bibs,DR_MDP,mdps}
\newpage
\appendix
\appendixpage
\section{Proofs for Section \ref{sec:bellman_opt}}\label{sec:proof_bellman_eqn_opt}

Recall that the history $$\bd{H}_t := \set{h_t = (s_0,a_0,\ds,a_{t-1} ,s_t): \omega = (s_0,a_0,\ds,a_{t-1} ,s_{t},\ds )\in\Omega}.$$ 
We also define the random element $H_t:\Omega\ra \bd{H}_t$ by point evaluation $H_t(\omega) = h_t$, and the $\sigma$-field $\cH_t:=\sigma(H_t)$. Next, we define $\set{\bd{G}_t:t\geq 0 }$ by $$\bd{G}_t := \set{g_t = (s_0,a_0,\ds ,s_t,a_t): \omega = (s_0,a_0,\ds ,s_{t},a_t\ds )\in\Omega}.$$
Note that $g_t$ is the concatenation of the history $h_t$ with the controller's action at time $t$, i.e., $g_t = (h_t,a_t)$, where $h_t\in\bd{H}_t$. Also, define the random element $G_t:\Omega\ra \bd{G}_t$ by point evaluation $G_t(\omega) = g_t$, and $\cG_t:=\sigma(G_t)$.

To prove the main theorems in Section \ref{sec:bellman_opt}, we introduce an important technical tool. 

\begin{proposition}\label{prop:mg}
For any function $f:S\ra\R$ and any pair of policies $\pi\in\PiH$ and $\kappa\in\KH$, define the process $$M_{f,n}^{\pi,\kappa} = \sum_{k=1}^n f(X_k) - \sum_{a,s'}\pi_{k-1}(a|H_{k-1})\kappa_{k-1}(s'|H_{k-1},a)f(s'). $$  
Then, $M_{f,n}^{\pi,\kappa}$ is a $\cH_k,P_\mu^{\pi,\kappa}$-Martingale. 
\end{proposition}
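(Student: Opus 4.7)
The statement is a standard compensator-style martingale identity, so the plan is to verify the three defining properties (adaptedness, integrability, and vanishing conditional increment) directly from the canonical construction reviewed in Section~\ref{sec:canonical_construction}.

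\textbf{Step 1: Adaptedness and integrability.} For each $k\le n$, the state $X_k$ is a coordinate of $H_k$, so $f(X_k)$ is $\cH_k$-measurable. The compensator term at stage $k$, namely $\sum_{a,s'}\pi_{k-1}(a|H_{k-1})\kappa_{k-1}(s'|H_{k-1},a)f(s')$, is a function of $H_{k-1}$ and hence $\cH_{k-1}$-measurable. Summing, $M_{f,n}^{\pi,\kappa}$ is $\cH_n$-measurable. Integrability is immediate because $S$ is finite, so $f$ is bounded and $|M_{f,n}^{\pi,\kappa}| \le 2n\|f\|_\infty$.

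\textbf{Step 2: Identify the one-step conditionals from the canonical construction.} The canonical construction of $P_\mu^{\pi,\kappa}$ (see \citet{wang2023foundationRMDP}, as cited in Section~\ref{sec:canonical_construction}) builds the measure as a product of sequential regular conditional distributions: for each $t\ge 0$,
\[
P_\mu^{\pi,\kappa}(A_t = a \mid \cH_t) = \pi_t(a \mid H_t),\qquad P_\mu^{\pi,\kappa}(X_{t+1} = s' \mid \cG_t) = \kappa_t(s' \mid H_t, A_t),
\]
where $\cG_t = \sigma(H_t,A_t)$ and $\cH_t\subseteq \cG_t$.

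\textbf{Step 3: Compute the conditional increment.} The increment is
\[
M_{f,n+1}^{\pi,\kappa} - M_{f,n}^{\pi,\kappa} = f(X_{n+1}) - \sum_{a,s'}\pi_n(a\mid H_n)\kappa_n(s'\mid H_n,a)f(s').
\]
Apply the tower property with $\cH_n \subseteq \cG_n$:
\[
E_\mu^{\pi,\kappa}\!\left[f(X_{n+1})\,\middle|\, \cH_n\right] = E_\mu^{\pi,\kappa}\!\left[E_\mu^{\pi,\kappa}[f(X_{n+1})\mid \cG_n]\,\middle|\,\cH_n\right] = E_\mu^{\pi,\kappa}\!\left[\sum_{s'}\kappa_n(s'\mid H_n,A_n)f(s')\,\middle|\,\cH_n\right].
\]
Because the inner sum is a function of $(H_n,A_n)$ and $A_n$ has conditional law $\pi_n(\cdot\mid H_n)$ given $\cH_n$, this equals $\sum_{a,s'}\pi_n(a\mid H_n)\kappa_n(s'\mid H_n,a)f(s')$. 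Subtracting the compensator yields $E_\mu^{\pi,\kappa}[M_{f,n+1}^{\pi,\kappa} - M_{f,n}^{\pi,\kappa}\mid \cH_n] = 0$, which is the martingale property.

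\textbf{Expected main obstacle.} There is no substantive obstacle; the only delicate point is bookkeeping around the two $\sigma$-algebras $\cH_n$ and $\cG_n$ and properly invoking the regular conditional distributions that the canonical construction provides. Because the paper has already set up this machinery and cites \citet{wang2023foundationRMDP} for the detailed construction, the proof reduces to a clean application of the tower property as above.
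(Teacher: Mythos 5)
Your proposal is correct and follows essentially the same route as the paper's proof: both reduce to showing the conditional increment vanishes by computing $E_\mu^{\pi,\kappa}[f(X_k)\mid\cH_{k-1}]$ via the policy-induced conditional law of $(A_{k-1},X_k)$ given $H_{k-1}$; your version merely spells out the two-stage conditioning through $\cG_{k-1}$ and adds the (trivial) adaptedness and integrability checks that the paper leaves implicit.
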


\subsection{Proof of Proposition \ref{prop:mg}}
\begin{proof}
\par It suffices to check $E[M_{f,k}^{\pi,\kappa} - M_{f,k-1}^{\pi,\kappa}|\cH_{k-1}] = 0$. 
    
\par We see that the conditional distribution of $(A_{k-1},X_k)$ given $H_{k-1}$ is determined by $\pi_{k-1}$ and $\kappa_{k-1}$. So, 
$$E_\mu^{\pi,\kappa}\sqbkcond{f(X_k) }{\cH_{k-1}} = \sum_{a,s'}\pi_{k-1}(a|H_{k-1})\kappa_{k-1}(s'|H_{k-1},a)f(s').$$ Also, note that $$M_{f,k}^{\pi,\kappa} - M_{f,k-1}^{\pi,\kappa} = f(X_k) - \sum_{a,s'}\pi_{k-1}(a|H_{k-1})\kappa_{k-1}(s'|H_{k-1},a)f(s').$$ This completes the proof. 
\end{proof}

\subsection{Proof of Theorem \ref{thm:general_dpp}}\label{section:proof:thm:general_dpp}
\begin{proof}
Note that the second claim follows from the first claim: If $(u,\alpha)$ is any other solution to \eqref{eqn:r_bellman_eqn_const_gain}, then by \eqref{eqn:general_dpp}, $\alpha = \overline\alpha(\mu,\PiH,\KH)$. On the other hand, by \eqref{eqn:general_dpp}, $ \overline\alpha(\mu,\PiH,\KH) = \alpha^*$. So, $\alpha = \alpha^*$. 

To show \eqref{eqn:general_dpp}, observe that $\underline\alpha(\mu,\PiS,\KH)$ is the smallest maxmin control average-reward among the ones that appear in \eqref{eqn:general_dpp}. We will first show that 
$\underline\alpha(\mu,\PiS,\KH) \geq \alpha^*$. Then, we show $\overline\alpha(\mu,\PiS,\KS)\leq \alpha^*$ as well as $\overline\alpha(\mu,\PiH,\KH)\leq\alpha^*$. Combining these, we can conclude \eqref{eqn:general_dpp} and hence Theorem \ref{thm:general_dpp}. 

\textbf{Step 1: Show $\underline\alpha(\mu,\PiS,\KH) \geq \alpha^*$.}

Since $(u^*,\alpha^*)$ solves \eqref{eqn:r_bellman_eqn_const_gain}, for each $\epsilon > 0$, there exists a controller decision rule $\Delta:S\ra\cP(\cA)$ so that 
$$\inf_{p_s\in\cP_s} E_{\Delta(\cd|s),p_s}[r(s,A_0) - \alpha^* + u(X_1)]\geq u^*(s)-\epsilon.$$ 
Therefore, for any history-dependent adversarial policy $\kappa = (\kappa_0,\kappa_1,\ds)$ and any $s\in S,g_{k-1}\in\bd{G}_{k-1},k\geq 0$, we have that 
\begin{equation}\label{eqn:to_use_property_of_Delta}
    \sum_{a\in A}\Delta(a|s)\crbk{r(s,a) -\alpha + \sum_{s'\in S}\kappa_{k}(s'|g_{k-1},s,a) u^*(s')} \geq   u^*(s) -\epsilon.
\end{equation}
Using \eqref{eqn:to_use_property_of_Delta}, we have that
\begin{equation}\label{eqn:to_use_reward_sum_Delta}\begin{aligned}
&\sum_{k=0}^{n-1}\sum_{a\in A}\Delta(a|X_t)[r(X_t,a) - \alpha^*]\\
&\geq  -n\epsilon + \sum_{k=0}^{n-1}\sqbk{u^*(X_k)- \sum_{(a,s')\in A\times S}\kappa_{k}(s'|H_{k},a)\Delta(a|X_k)u^*(s') }\\
&= - n \epsilon + M_{u^*,n}^{\Delta,\kappa} - u^*(X_n) + u^*(X_0)
\end{aligned}
\end{equation}

On the other hand, notice that 
\begin{equation}\label{eqn:to_use_417}
 E_\mu^{\Delta,\kappa}[r(X_k,A_k)-\alpha^*] = E_\mu^{\Delta,\kappa}\sum_{a\in A}\Delta(a|X_k)[r(X_k,a)-\alpha^*].   
\end{equation}
Therefore, by \eqref{eqn:to_use_reward_sum_Delta}, $$\begin{aligned}
\frac{1}{n} \sum_{k=0}^{n-1}E^{\Delta,\kappa}_\mu[r(X_k,A_k)-\alpha^*] &= \frac{1}{n}E^{\Delta,\kappa}_\mu\sum_{k=0}^{n-1}\sum_{a\in A}\Delta(a|X_k)[r(X_k,a) - \alpha^*] \\
&\geq -\epsilon +  E^{\Delta,\kappa}_\mu M_{u^*,n}^{\Delta,\kappa} + E^{\Delta,\kappa}_\mu \frac{u^*(X_0)-u^*(X_n)}{n}\\
&\ra -\epsilon   
\end{aligned}$$
as $n\ra\infty$. Here, we use Proposition \ref{prop:mg} to conclude that $E^{\Delta,\kappa}_\mu M_{u^*,n}^{\Delta,\kappa} = 0$

So, we have that for arbitrary $\kappa\in\KH$, $$\liminf_{n\ra\infty}E_\mu^{\Delta,\kappa} \frac{1}{n}\sum_{k=0}^{n-1}r(X_k,A_k) \geq \alpha^*-\epsilon.$$
This implies that \begin{equation}\label{eqn:to_use_eps_opt_policy_bd}\inf_{\kappa\in\KH}\liminf_{n\ra\infty}E_\mu^{\Delta,\kappa} \frac{1}{n}\sum_{k=0}^{n-1}r(X_k,A_k) \geq \alpha^*-\epsilon.
\end{equation}
Moreover, since $\Delta\in \PiS$, we have that 
\begin{equation}\label{eqn:to_use_405}
\underline\alpha(\mu,\PiS,\KH)  =  \sup_{\pi\in\PiS}\inf_{\kappa\in\KH}\underline\alpha (\mu,\pi,\kappa) \geq \alpha^* - \epsilon.
\end{equation}
Since $\epsilon>0$ is arbitrary, we conclude that $\underline\alpha(\mu,\PiS,\KH)\geq \alpha^*$. 

\textbf{Step 2: Show $\overline\alpha(\mu,\PiS,\KS)\leq \alpha^*$ and $\overline\alpha(\mu,\PiH,\KH)\leq\alpha^*$. }

We consider an arbitrary history-dependent policy $\pi = (\pi_0,\pi_1,\ds)\in\PiH$.  Since $(u^*,\alpha^*)$ solves \eqref{eqn:r_bellman_eqn_const_gain}, for any $s\in S$, $g_{k-1}\in\bd{G}_{k-1}$ and $k\geq 0$,
$$\inf_{p_s\in\cP_s} E_{\pi_k(\cd|g_{k-1},s),p_s}[r(s,A_0) - \alpha^* + u^*(X_1)]\leq u^*(s).$$ 

Hence, there exists $\kappa_{k}(\cd |g_{k-1},s,\cd) \in \cP_{s}$ so that
\begin{equation}\label{eqn:to_use_438}
\sum_{a\in A}\pi_k(a|g_{k-1},s)\crbk{r(s,a) +\sum_{s'\in S} \kappa_k(s'|g_{k-1},s,a)u^*(s')}\leq u^*(s) + \epsilon
\end{equation}
for each $s\in S$, $g_{k-1}\in\bd{G}_{k-1}$ and $k\geq 0$. 

Moreover, by the same argument, if $\pi_{k}(\cd|g_{k-1},s) = \pi(\cd|s)$ is Markov time-homogeneous, there exists $\kappa_{k}(\cd|g_{k-1},s,\cd) = \kappa(\cd|s,\cd)$ Markov time-homogeneous so that \eqref{eqn:to_use_438} holds. So, we have constructed an adversarial policy $\kappa := (\kappa_0,\kappa_1,\ds)\in \KH$ (or $\KS$) with $\set{\kappa_k:k\geq 0}$ specified above.

Therefore, with \eqref{eqn:to_use_438}, we have that
\begin{equation}\label{eqn:to_use_447}\begin{aligned}
&\sum_{k=0}^{n-1}\sum_{a\in A}\pi_k(a|H_k)[r(X_k,a) - \alpha^*]\\
&\leq  n\epsilon + \sum_{k=0}^{n-1}\sqbk{u^*(X_k)- \sum_{(a,s')\in A\times S}\pi_k(a|H_k)\kappa_{k}(s'|H_{k},a)u^*(s') }\\
&= n \epsilon + M_{u^*,n}^{\pi,\kappa} - u^*(X_n) + u^*(X_0)
\end{aligned}
\end{equation}

As in \eqref{eqn:to_use_417}, notice that $$E_\mu^{\pi,\kappa}[r(X_k,A_k)-\alpha^*] = E_\mu^{\pi,\kappa}\sum_{a\in A}\pi_k(a|H_k)[r(X_k,a)-\alpha^*].$$

Therefore, by \eqref{eqn:to_use_447}, $$\begin{aligned}
\frac{1}{n} \sum_{k=0}^{n-1}E^{\pi,\kappa}_\mu[r(X_k,A_k)-\alpha^*] &= \frac{1}{n}E^{\pi,\kappa}_\mu\sum_{k=0}^{n-1}\sum_{a\in A}\pi_k(a|H_k)[r(X_k,a) - \alpha^*] \\
&\leq \epsilon +  E^{\pi,\kappa}_\mu M_{u^*,n}^{\pi,\kappa} + E^{\pi,\kappa}_\mu \frac{u^*(X_0)-u^*(X_n)}{n}\\
&\ra \epsilon   
\end{aligned}$$
as $n\ra\infty$. Here, we also use Proposition \ref{prop:mg} to conclude that $E^{\pi,\kappa}_\mu M_{u^*,n}^{\pi,\kappa} = 0$

So, we have that that for arbitrary $\pi\in\PiH$ (or $\pi\in\PiS$) there exists $\kappa\in\KH$ (or $\kappa\in\KS$) s.t. $$\limsup_{n\ra\infty}E_\mu^{\pi,\kappa} \frac{1}{n}\sum_{k=0}^{n-1}r(X_k,A_k) \leq \alpha^*+\epsilon.$$
Hence, 
$$\inf_{\kappa\in\KH}\limsup_{n\ra\infty}E_\mu^{\pi,\kappa} \frac{1}{n}\sum_{k=0}^{n-1}r(X_k,A_k) \leq \alpha^*-\epsilon$$
where $\kappa\in\KH$ is replaced by $\kappa\in\KS$ if $\pi\in\PiS$. 

Moreover, since $\pi\in \PiH$ (or $\pi\in \PiS$) can be any policy, we have that 
\begin{equation}\label{eqn:to_use_582}
\overline\alpha(\mu,\PiH,\KH)  =  \sup_{\pi\in\PiH}\inf_{\kappa\in\KH}\overline\alpha (\mu,\pi,\kappa) \leq \alpha^* - \epsilon.
\end{equation}
Since $\epsilon>0$ can be arbitrarily small, we conclude that $\overline\alpha(\mu,\PiH,\KH)\leq \alpha^*$. The same proof still holds when $\pi\in \PiS$ and $\kappa\in\KS$, leading to $\overline\alpha(\mu,\PiS,\KS)\leq \alpha^*$.

\textbf{Step 3: Combining Steps 1 and 2}
We combine the results from steps 1 and 2 to conclude that
$$\alpha^*\leq \underline\alpha(\mu,\PiS,\KH)\leq \underline\alpha(\mu,\PiH,\KH)\leq \overline\alpha(\mu,\PiH,\KH)\leq \alpha^*,$$
$$\alpha^*\leq \underline\alpha(\mu,\PiS,\KH)\leq \overline\alpha(\mu,\PiS,\KH)\leq \overline\alpha(\mu,\PiH,\KH)\leq \alpha^*,$$
$$\alpha^*\leq \underline\alpha(\mu,\PiS,\KH)\leq \underline\alpha(\mu,\PiS,\KS)\leq \overline\alpha(\mu,\PiS,\KS)\leq \alpha^*,$$
$$\alpha^*\leq \underline\alpha(\mu,\PiS,\KH)\leq \overline\alpha(\mu,\PiS,\KH)\leq \overline\alpha(\mu,\PiS,\KS)\leq \alpha^*.$$
These inequalities imply \eqref{eqn:general_dpp}. 
\end{proof}

\subsection{Proof of Theorem \ref{thm:interchange_imply_dpp}}
\begin{proof}

Since $(u^*,\alpha^*)$ solves \eqref{eqn:inf_sup_eqn_const_gain}, we have that there exists $\psi:S\times A\ra\cP(\cS)$ s.t. $\psi(\cd|s,\cd)\in\cP_s$ for all $s\in S$ and 
$$\sup_{\phi\in\cQ} E_{\phi,\psi(\cd|s,\cd)}[r(s,A_0) - \alpha^* + u(X_1)]\leq u^*(s)+\epsilon.$$ 
Thus, for any history-dependent policy $\pi = (\pi_0,\pi_1,\ds)$, and $s\in S,g_{k-1}\in\bd{G}_{k-1},k\geq 0$, 
\begin{equation}\label{eqn:to_use_411}\sum_{a\in A}\pi_{k}(a|g_{k-1},s)\sqbk{r(s,a) -\alpha^* + \sum_{s'\in S}\psi(s'|s,a)u^*(s')} \leq  u^*(s)+\epsilon.
\end{equation}

Note that
$$\begin{aligned}
\frac{1}{n}\sum_{k=0}^{n-1}E^{\pi,\psi}_\mu [r(X_k,A_k)-\alpha^*]&= \frac{1}{n}\sum_{k=0}^{n-1}E^{\pi,\psi}_\mu E^{\pi,\psi}_\mu [r(X_k,A_k)-\alpha^*|\cH_k] \\
&=\frac{1}{n}\sum_{k=0}^{n-1}E^{\pi,\psi}_\mu \sqbk{\sum_a\pi_{k}(a|G_{k-1},X_k)r(X_k,a) -\alpha^*}\\
&\stackrel{(i)}{\leq} \frac{1}{n}\sum_{k=0}^{n-1}\crbk{E^{\pi,\psi}_\mu \sqbk{u^*(X_k) +\epsilon} - E^{\pi,\psi}_\mu \sqbk{\sum_{a\in A,s'\in S}\pi_k(a|H_{k})\psi(s'|s,a)u^*(s')}} \\
&= \epsilon + \frac{1}{n}E^{\pi,\psi}_\mu M_n^{\pi,\psi} + E^{\pi,\psi}_\mu \frac{u^*(X_0)- u^*(X_n)}{n}\\
&= \epsilon  + E^{\pi,\psi}_\mu \frac{u^*(X_0)- u^*(X_n)}{n}\\
&\ra \epsilon
\end{aligned}$$
$n\ra\infty$, where $(i)$ follows from \eqref{eqn:to_use_411}.

So, we have that
$$\limsup_{n\ra\infty}E_\mu^{\pi,\psi} \frac{1}{n}\sum_{k=0}^{n-1}r(X_k,A_k) \leq \alpha^* +\epsilon.$$
Since $\pi\in\PiH$ is arbitrary, we can conclude that
\begin{equation}\label{eqn:to_use_430}
\inf_{\kappa\in\KS}\sup_{\pi\in\PiH}\overline\alpha(\mu,\pi,\kappa)\leq \sup_{\pi\in\PiH}\overline\alpha(\mu,\pi,\psi)\leq \alpha^* +\epsilon.
\end{equation}

On the other hand, since $(u^*,\alpha^*)$ solves \eqref{eqn:r_bellman_eqn_const_gain}, Theorem \ref{thm:general_dpp} and the proofs in Appendix \ref{section:proof:thm:general_dpp} are still valid. In particular, by \eqref{eqn:to_use_405}, still holds. Therefore, combining \eqref{eqn:to_use_405} with \eqref{eqn:to_use_430}, we have that 
\begin{align*}\alpha^*-\epsilon &\leq\sup_{\pi\in\PiS}\inf_{\kappa\in\KH} \underline\alpha(\mu,\pi,\kappa)\\
&\leq \sup_{\pi\in\PiH}\inf_{\kappa\in\KH} \underline\alpha(\mu,\pi,\kappa)\\
&\leq \inf_{\kappa\in\KH}\sup_{\pi\in\PiH} \underline\alpha(\mu,\pi,\kappa)\\
&\leq\inf_{\kappa\in\KS}\sup_{\pi\in\PiH}\overline\alpha(\mu,\pi,\kappa) \\
&\leq\alpha^* +\epsilon.
\end{align*}
Since $\epsilon$ is arbitrary, we have that 
$$\alpha^* = \sup_{\pi\in\PiS}\inf_{\kappa\in\KH} \underline\alpha(\mu,\pi,\kappa) =\inf_{\kappa\in\KS}\sup_{\pi\in\PiH}\overline\alpha(\mu,\pi,\kappa).$$
This implies the statement of the theorem, as $\sup_{\pi\in\PiS}\inf_{\kappa\in\KH} \underline\alpha(\mu,\pi,\kappa)$ is the smallest and $\inf_{\kappa\in\KS}\sup_{\pi\in\PiH}\overline\alpha(\mu,\pi,\kappa)$ is the largest among all the relevant values in the statement of the theorem. 
\end{proof}

\subsection{Proof of Theorem \ref{thm:policy_opt_gap}}
\begin{proof}
    We observe that \textbf{Step 1} in the proof of Theorem \ref{thm:general_dpp} shows that the policy $\Delta$ assumed in the statement of  Theorem \ref{thm:policy_opt_gap} satisfies \eqref{eqn:to_use_eps_opt_policy_bd}; i.e. 
    $$\inf_{\kappa\in\KH}\liminf_{n\ra\infty}E_\mu^{\Delta,\kappa} \frac{1}{n}\sum_{k=0}^{n-1}r(X_k,A_k) \geq \alpha^*-\epsilon.$$

By Theorem \ref{thm:general_dpp} and set inclusion arguments, we derive that
$$\alpha^* = \overline {\alpha}(\mu,\PiS,\KH) =\overline{\alpha}(\mu,\PiS,\KS) = \underline{\alpha}(\mu,\PiS,\KS) \geq \inf_{\kappa\in\KS}\underline{\alpha}(\mu,\Delta,\kappa)\geq \inf_{\kappa\in\KH}\underline{\alpha}(\mu,\Delta,\kappa)\geq \alpha^*-\epsilon.$$ Rearranging these inequalities gives Theorem \ref{thm:policy_opt_gap}. 
    
\end{proof}

\section{Proof of Theorem \ref{thm:existence_of_sol}}
\begin{proof}
\ref{enum:discounted_bd_span}$\implies$\ref{enum:arbe_exist_sol}:
\par We fix a reference state $s_0\in S$ and define $$u_\gamma = v_\gamma^* - v_\gamma^*(s_0),\qquad 
\alpha_\gamma = (1-\gamma)v_\gamma^*(s_0).$$

Since $v_\gamma^*$ solves \eqref{eqn:discounted_r_bellman_eqn}, we observe that 
\begin{equation}\label{eqn:to_use_444_u_eqn}
    \begin{aligned}v_\gamma^*(s) - v_\gamma^*(s_0) &= \sup_{\phi\in \cQ}\inf_{p_s\in\cP_s}E_{\phi,p_s}[r(s,A_0) +\gamma (v_\gamma^* (X_1)-v_\gamma^*(s_0))] - (1-\gamma)v_\gamma^*(s_0)\\
    u_\gamma(s) &= \sup_{\phi\in \cQ}\inf_{p_s\in\cP_s}E_{\phi,p_s}[r(s,A_0)- \alpha_\gamma +\gamma u_\gamma(X_1)] 
\end{aligned}
\end{equation}

From \citet{wang2023foundationRMDP}, $\norminf{v_\gamma^*}\leq 1/(1-\gamma)$. So, $0\leq \alpha_\gamma\leq 1$. Moreover, by  \ref{enum:discounted_bd_span}, $\norminf{u_\gamma}\leq \spnorm{v_\gamma^*} \leq C <\infty$ uniformly in $\gamma$. Hence $(u_\gamma ,\alpha_\gamma)\in [-C,C]^{|S|}\times [0,1]$ for all $\gamma$. As $ [-C,C]^{|S|}\times [0,1]$ is compact in the sup metric, there exists a convergent subsequence $\set{(u_{\gamma_n},\alpha_{\gamma_n}):n=1,2,\ds}$ with $(u_*,\alpha_*) := \lim_{n\ra \infty}(u_{\gamma_n},\alpha_{\gamma_n})$.

Next, we would like to take the limit $n\ra\infty$ on both sides of \eqref{eqn:to_use_444_u_eqn}, with $\gamma$ replaced by $\gamma_n$. To do this, we define for $\gamma\in[0,1]$, $(u,\alpha)\in [-C,C]^{|S|}\times [0,1]$, $\phi\in\cP(\cA)$, $p_s\in\set{A\ra \cP(\cS)}$,  $$f_s(\gamma,u,\alpha,\phi,p_s) = E_{\phi,p_s}[r(s,A_0)- \alpha +\gamma u(X_1)],$$ which is a continuous function. 

We first note that since $\cP_s$ is bounded and the mapping $p_s\ra f_s(\gamma,u,\alpha,\phi,p_s) $ is continuous, 
$$\inf_{p_s\in\cP_s}f_s(\gamma,u,\alpha,\phi,p_s) = \min_{p_s\in\overline \cP_s}f_s(\gamma,u,\alpha,\phi,p_s)$$ 
where $\overline \cP_s$ is the closure of $\cP_s$. 

Since $\overline \cP_s$ is compact and does not depend on $\gamma,\phi,u,\alpha$, by Berge’s maximum theorem \citep[VI.3, Theorem 1 \& 2]{berge1963topological}, the mapping $$(\gamma,u,\alpha,\phi)\ra m_s(\gamma,u,\alpha,\phi):=\min_{p_s\in\overline{\cP}_s}f_s(\gamma,u,\alpha,\phi,p_s)$$ is continuous for $\gamma\in[0,1]$, $(u,\alpha)\in [-C,C]^{|S|}\times [0,1]$, and $\phi\in\cP(\cA)$. 

Apply the same argument, we have that $$M_s(\gamma,u,\alpha) = \max_{\phi\in\overline{\cQ}} m_s(\gamma,u,\alpha,\phi) = \sup_{\phi\in \cQ} m_s(\gamma,u,\alpha,\phi) = \sup_{\phi\in \cQ}\inf_{p_s\in\cP_s}f_s(\gamma,u,\alpha,\phi,p_s)$$ is continuous for $\gamma\in[0,1]$ and $(u,\alpha)\in [-C,C]^{|S|}\times [0,1]$. 

Therefore, we have that $$\lim_{n\ra\infty}M_s(\gamma_n,u_{\gamma_n},\alpha_{\gamma_n}) = M_s(1,u_*,\alpha_*) =  \sup_{\phi\in \cQ}\inf_{p_s\in\cP_s}E_{\phi,p_s}[r(s,A_0)- \alpha_* + u_*(X_1)].$$
This and \eqref{eqn:to_use_444_u_eqn} implies that 
$$ u_*(s)= \sup_{\phi\in \cQ}\inf_{p_s\in\cP_s}E_{\phi,p_s}[r(s,A_0)- \alpha_* + u_*(X_1)]$$
i.e. $(u_*,\alpha_*)$ solves \eqref{eqn:r_bellman_eqn_const_gain}.

\ref{enum:arbe_exist_sol}$\implies$\ref{enum:discounted_bd_span}:

Let $(u^*,\alpha^*)$ be a solution to \eqref{eqn:r_bellman_eqn_const_gain}. Due to the solutions of \eqref{eqn:r_bellman_eqn_const_gain} being shift-invariant, w.l.o.g., we assume that $u^*\geq 0$ and $\min_{s\in S}u(s) = 0$. To simplify notation, we define the discounted Bellman operator $$\cT_{\gamma}[v] :=\sup_{\phi\in\cQ}\inf_{p_s\in\cP_s}E_{\phi,p_s}[r(s,A_0) + \gamma v(X_1)]. $$
Then $\cT_\gamma[v_\gamma^*] = v_\gamma^*$, where $v_\gamma^*$ is the unique fixed-point. 

We define two auxiliary values 
\begin{equation}\label{eqn:to_use_def_over_under_v_gamma}\overline {v}_\gamma:= \frac{\alpha^*}{1-\gamma} + u^*,\quad \underline{v}_\gamma:=\frac{\alpha^*}{1-\gamma} + u^* - \spnorm{u^*}.
\end{equation}

\textbf{Step 1: }We show that $\cT_{\gamma}[\overline {v}_\gamma]\leq \overline{v}_\gamma$ and $\cT_{\gamma}[\underline {v}_\gamma]\geq \underline{v}_\gamma$.

We observe that for all $s\in S$, 
$$\begin{aligned}
    \cT_{\gamma}[\overline {v}_{\gamma}](s)&= \sup_{\phi\in\cQ}\inf_{p_s\in\cP_s}E_{\phi,p_s}[r(s,A_0) + \gamma \overline {v}_{\gamma}(X_1)]\\
    &= \frac{\gamma\alpha^*}{1-\gamma} + \alpha^*+ \sup_{\phi\in\cQ}\inf_{p_s\in\cP_s}E_{\phi,p_s}[r(s,A_0) -\alpha^*+ \gamma u^*(X_1)]\\
    &=  \frac{\alpha^*}{1-\gamma} + \sup_{\phi\in\cQ}\inf_{p_s\in\cP_s}E_{\phi,p_s}[r(s,A_0) -\alpha^*+  u^*(X_1) - (1-\gamma)u^*(X_1)]\\
    &\stackrel{(i)}{\leq}  \frac{\alpha^*}{1-\gamma} + \sup_{\phi\in\cQ}\inf_{p_s\in\cP_s}E_{\phi,p_s}[r(s,A_0) -\alpha^*+ \gamma u^*(X_1)]\\
    &= \overline{v}_\gamma (s)
\end{aligned}$$
where $(i)$ follows from the choice that $u^*\geq 0$ and the last equality uses the assumption that $(u^*,\alpha^*)$ solves \eqref{eqn:r_bellman_eqn_const_gain}. On the other hand, 

$$\begin{aligned}
    \cT_\gamma[\underline{v}_\gamma](s)
    &= \frac{\alpha^*}{1-\gamma} + \sup_{\phi\in\cQ}\inf_{p_s\in\cP_s}E_{\phi,p_s}[r(s,A_0) -\alpha^*+ \gamma (u^*(X_1) - \spnorm{u^*})]\\
    &= \frac{\alpha^*}{1-\gamma} -  \spnorm{u^*} + \sup_{\phi\in\cQ}\inf_{p_s\in\cP_s}E_{\phi,p_s}[r(s,A_0) -\alpha^*+ u^*(X_1) +  (1-\gamma)(\spnorm{u^*}-u^*(X_1))]\\
    &\stackrel{(i)}{\geq}\frac{\alpha^*}{1-\gamma} -  \spnorm{u^*} + \sup_{\phi\in\cQ}\inf_{p_s\in\cP_s}E_{\phi,p_s}[r(s,A_0) -\alpha^*+ u^*(X_1) ]\\
    &=\underline{v}_\gamma (s)
\end{aligned}$$
where $(i)$ follows from $u^*\geq 0$ and $\min_{s\in S}u^*(s) = 0$ hence $\spnorm{u^*}-u^*= \norminf{u^*}-u^* \geq 0 $. 

\textbf{Step 2: }We prove that $v_{\gamma}^*$, the solution to \eqref{eqn:discounted_r_bellman_eqn}, is upper bounded by $\overline{v}_\gamma$ and lower bounded by $\underline{v}_\gamma$; i.e. $$\underline{v}_\gamma\leq v_{\gamma}^*\leq \overline{v}_\gamma. $$

To achieve this, we will use the fact that $\cT_\gamma$ is a monotone $\gamma$-contraction. 

First, the contraction property of $\cT_\gamma$ is well known (see \citet{wang2023foundationRMDP}). We then show that $\cT_\gamma$ is a monotone operator; i.e., $\cT_\gamma [u]\geq \cT_\gamma [v]$ if $u\geq v$. This is straightforward
$$\cT_{\gamma}[u](s)= \sup_{\phi\in\cQ}\inf_{p_s\in\cP_s}E_{\phi,p_s}[r(s,A_0) + \gamma [u(X_1)-v(X_1)]+\gamma v(X_1)]\geq \cT_{\gamma}[v](s)$$
where we used that $u(X_1)-v(X_1)\geq 0$.

Next, we check by induction that $$\cT_\gamma^k[\overline{v}_\gamma] :=\underbrace{(\cT_{\gamma}\circ \ds \circ \cT_{\gamma})}_{\times k} [\overline v]\leq \overline{v}$$ for all $k\geq 1$. The base case $k = 1$ follows from the previous proof. For the induction step, assume that $\cT_{\gamma}^k[\overline{v}_{\gamma}] \leq \overline{v}_{\gamma}$. By the monotonicity of $\cT_{\gamma}$, we have that $$\cT_{\gamma}^{k+1}[\overline{v}_{\gamma}] = \cT_{\gamma}[\cT_{\gamma}^k[\overline{v}_{\gamma}]]\leq \cT_{\gamma}[\overline{v}_{\gamma}]\leq \overline{v}_{\gamma},$$ completing the induction step. 

On the other hand, by the contraction property, $\overline v_\gamma \geq \cT_{\gamma}^k[\overline{v}_\gamma]\ra v_{\gamma}^*$ as $k\ra\infty$. So, we have that $\overline{v}_\gamma\geq v_{\gamma}^*$.

Similarly, we show that $v_{\gamma}^*$ is lower bounded by $\underline{v}_\gamma$. Again, we apply the same induction argument. We see that the base case holds due to $\cT_{\gamma}[\underline {v}_\gamma]\geq \underline{v}_\gamma$, and the induction step follows from the monotonicity of $\cT_{\gamma}$. Therefore, by the contraction property, $\underline{v}_\gamma\leq \cT_{\gamma}^k[\underline{v}_\gamma]\ra v_{\gamma}^*$ as $k\ra\infty$. So, we have that $\underline{v}_\gamma\leq v_{\gamma}^*$. 

\textbf{Step 3:} We conclude the proof by bounding the span of $v_\gamma^*$.

Since $\underline{v}_\gamma\leq v_{\gamma}^*\leq \overline{v}_\gamma $, $$\spnorm{v_\gamma^*} = \max_{s\in S}v_\gamma^*(s) - \min_{s\in S}v_\gamma^*(s) \leq \max_{s\in S}\overline {v}_\gamma(s) - \min_{s\in S}\underline{v}_\gamma(s)= 2\spnorm{u^*}$$
where the last equality follows from the definition of $\overline{v}_\gamma$ and $\underline{v}_\gamma$ in \eqref{eqn:to_use_def_over_under_v_gamma}. 
\end{proof}

\section{Proof of Auxiliary Lemmas in Section \ref{sec:proof_of_thm_wc_ctrl}}\label{sec:proof_of_aux_lemma_wc_ctrl}
\subsection{Proof of Lemma \ref{lemma:wc_ctrl_implication}}
\begin{proof}
Fix $\Delta$ and its communicating set $C_\Delta\subseteq S$; write $C_\Delta^c:=S\setminus C_\Delta$. We separately consider $w\in C_\Delta$ and $w\in C_\Delta^c$. 

\emph{Case 1: $w\in C_\Delta$.}
Since $w\in C_\Delta$, by weak communication, there exist $p = p^{w,y,\Delta}\in\cP$ and $N_1\le |C_\Delta|$ such that
$p_\Delta^{N_1}(y|w)>0$. This proves the claim.

\emph{Case 2: $w\in C_\Delta^c$.}
We choose an arbitrary $p\in\cP$. By weak communication, there exists a non-repeating path
$w=s_0\ra s_1\ra\cdots\ra s_k=x\in C_\Delta$ with $k\le |C_\Delta^c|$ and
$\prod_{j=0}^{k-1} p_\Delta(s_{j+1}|s_j)>0$. Define
\[
q(\cdot|s,\cdot):=p(\cdot|s,\cdot)\in\cP_s,\quad \forall s\in C_\Delta^c.
\]
Inside $C_\Delta$, we apply the $p^{x,y,\Delta}\in\cP$ and $N_1\le |C_\Delta|$ in Case 1, with
$p_\Delta^{N_1}(y|x)>0$, and set
\[
q(\cdot|s,\cdot):=p^{x,y,\Delta}(\cdot|s,\cdot)\in\cP_s,\quad \forall  s\in C_\Delta.
\]

By construction, under $q_\Delta$, there is a non-repeating path $w=s_0\ra\cdots\ra s_k=x= c_0\ra \ds \ra c_{N_1} = y$ such that
$\prod_{j=0}^{k-1} q_\Delta(s_{j+1}|s_j)>0$ and $k\le |C_\Delta^c|$. Hence, $q_\Delta^{N}(y|w)>0$ with $N:=k+N_1 \le |C_\Delta^c|+|C_\Delta|=|S|$. Moreover, By S-rectangularity, $q\in\cP$. This proves the claim with $p =q$. 
\end{proof}

\subsection{Proof of Lemma \ref{lemma:construct_K_Delta}}

\begin{proof}


By weak communication, for each $p\in\cP$, every state in $C_\Delta^c$ is transient under controller $\Delta$. Thus $M_{\Delta,p}^\Delta$ is the transient block of $p_{\Delta}$ in the canonical classification. So, letting $T_{C_{\Delta}}:=\inf\set{t\geq 0: X_t\in C_{\Delta}}$, a first transition analysis argument suggests that $I - M_{\Delta,p}^\Delta$ is invertible and for any $z\in C_{\Delta}^c$, 
$$[(I - M_{\Delta,p}^\Delta)^{-1} e ](z) = E^{\Delta,p}_z T_{C_\Delta} < \infty.$$

Consider the mapping
\[
g:(\eta,p)\ra \det\crbk{I - M_{\eta,p}^\Delta}.
\]
The mapping $(\eta,p)\ra M_{\eta,p}^\Delta$ is continuous (entrywise), and the determinant is a polynomial in entries, hence $g$ is continuous on $\{S\to\cQ\}\times\cP$.

Since $\cP$ is compact and does not depend on $\eta$, by Berge’s maximum theorem \citep[VI.3, Theorem 1 \& 2]{berge1963topological}
\[
h:\eta\ra \min_{p\in\cP}|g(\eta,p)|
\]
is also continuous in $\eta$. 

On the other hand, at $\eta=\Delta$,  one has that for every $p$, $g(\Delta,p)\neq 0$, by the invertibility of $I - M_{\Delta,p}^\Delta$.  So, for some $p'\in\cP$, $$h(\Delta) = \min_{p\in\cP}|g(\eta,p)| = \abs{\det\crbk{I - M_{\Delta,p'}^\Delta}} > 0.$$ Therefore, by the continuity of $h$  there exists an open neighborhood $K'_{\Delta}$ s.t. for any $\eta\in K'_{\Delta}$, $M_{\eta,p}^\Delta$ is not all 0 for any $p$, and $$ h(\eta) = \min_{p\in \cP}\abs{  \det\crbk{I - M_{\eta,p}^\Delta} }> 0. $$ Therefore, we conclude that $\forall (\eta,p) \in K'_{\Delta} \times \cP$, $I-M_{\eta,p}^\Delta$ is invertible.

With this, we define
\[
\phi(\eta,p):= e^\top(I - M_{\eta,p}^\Delta)^{-1} e
\]
on $K'_\Delta\times\cP$, which is also continuous by the continuity of matrix inversion. So, applying Berge’s maximum theorem again, we conclude that by the compactness of $\cP$,
\[
\eta \ra \max_{p\in\cP} \phi(\eta,p),\quad \eta\in K'_\Delta.
\]
is continuous. Moreover, note that by the Neumann series representation, for all $(\eta,p)\in K'_\Delta\times\cP$
$$e^\top(I - M_{\eta,p}^\Delta)^{-1} e = \sum_{n=0}^{\infty} e^\top (M_{\eta,p}^\Delta)^n e \geq 0$$

Finally, to conclude the lemma, we note that $$0\leq \max_{p\in\cP} \phi(\Delta,p) = \max_{p\in\cP}e^\top (I - M_{\Delta,p}^\Delta)^{-1} e < \infty,$$ and by continuity, there exists an open neighborhood $K_{\Delta}\subseteq K'_{\Delta}$ such that for all $\eta\in \overline{K_{\Delta}}$, 
$$0\leq \max_{p\in\cP}e^\top(I - M_{\eta,p}^\Delta)^{-1} e = \max_{p\in\cP} \phi(\eta,p) \leq  \max_{p\in\cP} \phi(\Delta,p)  + 1 = \max_{p\in\cP}e^\top (I - M_{\Delta,p}^\Delta)^{-1} e + 1 < \infty.$$
This concludes the proof of Lemma \ref{lemma:construct_K_Delta}. 
\end{proof}

\subsection{Proof of Lemma \ref{lemma:wc_bd_E_hitting_time}}

\begin{proof}

Consider any $w\in S$. By Lemma \ref{lemma:wc_ctrl_exists_adv_lower_bd_prob}, for any $\Delta:S\ra\cQ$ with $\Delta\in G_{\Delta_k}$ and $y\in C_{\Delta_k}$, there is $p\in\cP$ and $N\leq |S|$ (both can be dependent on $w,y,\Delta$) such that $$P^{\Delta,p}_w(\tau_y \leq |S|)\ge p_\Delta^N(y|w)\geq \delta.$$ 

We will first show that for some $\delta'>0$ independent of $\Delta$ and $y$, there is $q\in\cP$ s.t. 
\begin{equation}
 \min_{w\in S}P_w^{\Delta,q}(\tau_y\leq |S|)\geq \delta';\label{eqn:to_use_unif_hitting_prob_lb}
\end{equation} i.e. choice $p\in\cP$ in Lemma \ref{lemma:wc_ctrl_exists_adv_lower_bd_prob} can be made independent of $w$.

To this end, let us define $q = q^{y,\Delta}\in\cP$ algorithmically as follows. We will iteratively assign $q(\cd|s,\cd)\in\cP_s$ until all $\set{q(\cd|s,\cd): s\in S}$ has been assigned. 
    
We initialize the algorithm by assigning $q(\cd|y,\cd) = p_y $ for an arbitrary $p_y\in \cP_y$. Then, let $V = \set{y}$ be the assigned states, and $V^c = S\setminus V$ the complement in $S$ be the unassigned states.  
\begin{enumerate}
    \item Choose any unassigned state $s_0\in V^c$. Then by Lemma \ref{lemma:wc_ctrl_exists_adv_lower_bd_prob} there exists $p = p^{s_0,y,\Delta}\in\cP$ and $N$ s.t. $p_{\Delta}^N(y|s_0)\geq \delta$. Therefore, there exists a path $s_0\ra s_1\ra,\ds,\ra s_N = y$ s.t. $p_\Delta(s_{k+1}|s_k) > 0$.  

    Moreover, since there are at most $|S|^N$ paths from $s_0$ to $y$ in $N$ steps, there must be one path with probability at least $\delta|S|^{-N}$ under $p_\Delta$. Let $s_0\ra s_1\ra,\ds,\ra s_N = y$ be this path. 
    
    Note that, in general, this path could be repeating, i.e., $s_i = s_j$ for some $i <j$. However, we can ``trim off'' the in-between segment to get $s_0\ra \ds \ra s_i\ra s_{j+1}\ra  \ds \ra s_N = y$. This is again a path with probability at least $\delta|S|^{-N}$ under $p_\Delta$. We trim until obtaining a non-repeating path with probability at least $\delta|S|^{-N}$ and relabel it with $s_0\ra s_1\ra ,\ds ,\ra s_k = y$ for some $k\leq N$. 
    
    Therefore, we have that on this path, for all $i\leq k-1$,
    $$p_\Delta(s_{i+1}|s_i) \geq \prod_{i=0}^{k-1} p_\Delta(s_{i+1}|s_i) \geq \delta|S|^{-N}. $$  
    
    \item Let $j = \min\set{i\geq 1: s_i\in V}$ be the first index so that $s_j$ is assigned. So, $s_i\in V^c$ for all $i\leq j-1$. We assign $q(\cd|s_i,\cd):= p^{s_0,y,\Delta}(\cd|s_i,\cd)\in \cP_{s_i}$. Note that the constuction of this path implies that $q_{\Delta}(s_{i+1}|s_i) \geq \delta|S|^{-N}$ for all $i\leq j-1$.

    Moreover, at the current iteration, $q_\Delta(\cd|s)$ is well-defined for all $s\in V$. Since $s_j\in V$, there is a non-repeating path $\set{s_j = s_j',s'_{j+1},\ds ,s'_{k} = y}\subseteq V$ s.t. $q_\Delta(s'_{i+1}|s'_{i}) \geq \delta|S|^{-N}$. 
    
    Therefore, after assigning $q(\cd|s_i,\cd)$ for $i\leq j-1$, we have a new path $s_0\ra \ds\ra s_j\ra s_{j+1}'\ra \ds \ra s_k' = y$ with positive one step transition probabilities at least $\delta|S|^{-N}$ under $q_\Delta$. We record this path that leads to $y$. 
    
    \item Update $V \la V\cup\set{s_0,\ds,s_{j-1}}$. 
\end{enumerate}
Iterate until $V = S$.

Note that the algorithm terminates in at most $|S|$ iterations, producing $q\in\cP$, as we always assign $q(\cd|s,\cd)\in\cP_s$. Moreover, it produces a directed graph whose edges correspond to a positive transition probability at least $\delta|S|^{-N}$ under $q_\Delta(\cd|\cd)$, ensuring that every state can reach $y$ in at most $|S|$ steps.

Therefore, we conclude that with $q = q^{y,\Delta}\in\cP$ constructed by the above algorithm, 
$$
\begin{aligned}\min_{w\in S}P_w^{\Delta,q}(\tau_y\leq |S|)\geq \crbk{\delta|S|^{-N}}^{|S|}=:\delta' > 0. 
\end{aligned}$$
Note that $\delta'$ is independent of $\Delta$ and $y$. This shows \eqref{eqn:to_use_unif_hitting_prob_lb}. 
    
Under a standard renewal-type argument, we turn the probability bound in \eqref{eqn:to_use_unif_hitting_prob_lb} into the expected hitting time bound in Lemma \ref{lemma:wc_bd_E_hitting_time}.

First, we show that 
\begin{equation}\label{eqn:to_use_renewal_bound_tauy}
   \max_{w\in S} P_w^{\Delta,q}(\tau_y > m|S|)\leq (1-\delta')^m. 
\end{equation}

We prove this by induction on $m$. The base case $m = 1$ follows directly from \eqref{eqn:to_use_unif_hitting_prob_lb} that \begin{equation}\label{eqn:to_use_base_case_1-delta}
\max_{w\in S} P_w^{\Delta,q}(\tau_y > |S|) = 1-\min_{w\in S}P_w^{\Delta,q}(\tau_y \leq |S|)\leq 1-\delta'.\end{equation}
For the induction step, note that for any $x$
\begin{align*}
    P_w^{\Delta,q}(\tau_y > (k+1)|S|)&= E_w^{\Delta,q}\1\set{\tau_y > (k+1)|S|}\\
    &=E_w^{\Delta,q}E_w^{\Delta,q}\sqbkcond{\1\set{\tau_y > (k+1)|S|}}{\cH_{k|S|}}\\
    &\stackrel{(i)}{=} E_w^{\Delta,q}\sqbk{\1\set{\tau_y>k|S|}E_w^{\Delta,q}\sqbkcond{\1\set{\tau_y > (k+1)|S|}}{\cH_{k|S|}}}\\
    &\stackrel{(ii)}{=} E_w^{\Delta,q}\sqbk{\1\set{\tau_y>k|S|}E_{X_{k|S|}}^{\Delta,q}\sqbk{\1\set{\tau_y > |S|}}}\\
    &= E_w^{\Delta,q}\sqbk{\1\set{\tau_y>k|S|}P_{X_{k|S|}}^{\Delta,q}(\tau_y > |S|) }\\
    &\stackrel{(iii)}{\leq } E_w^{\Delta,q}[\1\set{\tau_y>k|S|}](1-\delta')\\
    &\leq (1-\delta')^{k+1}\\
\end{align*}
where $(i)$ follows from $\tau_y$ is a $\cH_t$-stopping time with$\set{\tau_y >k|S|} = \set{\tau_y\leq k|S|}^c\in \cH_{k|S|}$, as well as $\1\set{\tau_y > (k+1)|S|} =\1\set{\tau_y > (k+1)|S|}\1\set{\tau_y > k|S|}$, $(ii)$ is due to the Markov property, and $(iii)$ follows from \eqref{eqn:to_use_base_case_1-delta}. This completes the induction step and shows \eqref{eqn:to_use_renewal_bound_tauy}.

We then prove Lemma \ref{lemma:wc_bd_E_hitting_time} using \eqref{eqn:to_use_renewal_bound_tauy}. Note that since $\tau_y$ is non-negative, for $w\in S$, $w\neq y$, 
$$\begin{aligned}
E_w^{\Delta,q}[\tau_y] &= \sum_{k\geq 0}P_w^{\Delta,q}(\tau_y\geq k) \\
&=\sum_{k\geq 0}P_w^{\Delta,q}(\tau_y> k) \\
&\leq |S| +\sum_{k\geq 1} |S|P_w^{\Delta,q}(\tau_y\geq k|S|)\\
&\leq |S|\sum_{k=0}^\infty (1-\delta')^k\\
&\leq \frac{|S|}{\delta'}.
\end{aligned}$$
Of course $E_y^{\Delta,q}[\tau_y] = 0\leq |S|/\delta'$. This implies Lemma \ref{lemma:wc_bd_E_hitting_time}. 
\end{proof}

\section{Proof of Theorem \ref{thm:wc_adv}}
\label{sec:proof_wc_adv}

The proof follows arguments very similar to those in Theorem \ref{thm:wc_ctrl}. To avoid excessive repetition, we focus on explaining how the earlier proof carries over and highlighting the necessary modifications.

As in the proof of Theorem \ref{thm:wc_ctrl}, we primarily address the weakly communicating case in assumption (1). Since a communicating adversary is a special case, the argument under assumption (2) carries over with only minor changes, the main difference being the lack of convexity of $\cQ$ relative to assumption (1).

\begin{proof}
From the argument proving Theorem \ref{thm:existence_of_sol}, it follows that if the solutions $\set{v'_\gamma:\gamma\in(0,1)}$ to \eqref{eqn:discounted_inf_sup_eqn} have uniformly bounded span, then \eqref{eqn:inf_sup_eqn_const_gain} has a solution. Therefore, we now proceed to establish the uniform boundedness of $\spnorm{v'_\gamma}$, mirroring the argument used in the proof of Theorem \ref{thm:wc_ctrl}.

\subsection*{Preliminary Constructions}
Mirroring the proof of Lemma \ref{lemma:wc_ctrl_implication}, it is easy to see that the following Lemma holds. 

\begin{lemma}\label{lemma:wc_adv_implication}
If the stationary adversary policy $p\in\cP$ is weakly communicating and $\cP$ is S-rectangular, then for any $w\in S$ and any $y\in C_p$, there exist $\Delta:S\to\cQ$ and $N\le |S|$ such that $p_\Delta^{N}(y|w)>0$.
\end{lemma}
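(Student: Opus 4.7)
The plan is to mirror the proof of Lemma \ref{lemma:wc_ctrl_implication}, interchanging the roles of the controller and the adversary. A key structural simplification is that we no longer need S-rectangularity to stitch policies together: a stationary controller policy $\Delta:S\to\cQ$ is just a function, so one may specify it state-by-state and concatenate different ``local choices'' without any compatibility constraint beyond $\Delta(s)\in\cQ$.

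Fix the weakly communicating $p\in\cP$, its communicating class $C_p\subseteq S$, and let $w\in S$, $y\in C_p$. First I would split into two cases. If $w\in C_p$, weak communication of $p$ directly provides $\Delta:S\to\cQ$ and $N\le|C_p|$ with $p_{\Delta}^{N}(y|w)>0$, yielding the claim with $N\le|S|$. If $w\in C_p^c$, the weak-communication hypothesis implies $w$ is transient under every stationary controller, so for any fixed $\Delta_0:S\to\cQ$ the chain $p_{\Delta_0}$ must reach $C_p$ with positive probability in finitely many steps (otherwise some state in $C_p^c$ would be recurrent under $p_{\Delta_0}$, a contradiction). Trimming cycles would yield a non-repeating path $w=s_0\to\cdots\to s_k=x\in C_p$ with the intermediate states in $C_p^c$, $k\le|C_p^c|$, and $\prod_{j=0}^{k-1}p_{\Delta_0}(s_{j+1}|s_j)>0$. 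Applying the first case to $(x,y,p)$ then furnishes $\Delta_1:S\to\cQ$ and $N_1\le|C_p|$ with $p_{\Delta_1}^{N_1}(y|x)>0$. I would define $\Delta$ by setting $\Delta(s)=\Delta_0(s)$ for $s\in C_p^c$ and $\Delta(s)=\Delta_1(s)$ for $s\in C_p$; since $\Delta(s)\in\cQ$ for every $s$, this is a valid stationary controller. The first-leg path retains positive probability under $p_\Delta$ because $\Delta$ and $\Delta_0$ agree on $C_p^c$, and the second leg inherits its positive probability from $p_{\Delta_1}$, giving $p_\Delta^N(y|w)>0$ with $N=k+N_1\le|S|$.

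The main obstacle I expect mirrors the subtle point already present in the proof of Lemma \ref{lemma:wc_ctrl_implication}: one must ensure that the second-leg positive-probability path from $x$ to $y$ survives the substitution of $\Delta_1$ by $\Delta$ on any states of $C_p^c$ that the path visits en route. This is immediate if the second-leg path can be chosen to remain inside $C_p$; otherwise the concatenation would need to be iterated, alternating transient-escape legs (using $\Delta_0$-like choices on $C_p^c$) with within-$C_p$ legs supplied by weak communication, while keeping the total length bounded by $|S|$. Since this reasoning is accepted in Lemma \ref{lemma:wc_ctrl_implication}, the identical argument, with controller and adversary roles swapped, transports to the present setting---indeed more cleanly here, as no S-rectangularity invocation is needed to assemble $\Delta$.
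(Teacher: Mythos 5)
Your proof mirrors the paper's argument exactly: the paper establishes this lemma simply by repeating the two-case construction of Lemma \ref{lemma:wc_ctrl_implication} with the roles of controller and adversary interchanged, which is precisely what you do, including the correct observation that stitching a stationary controller state-by-state ($\Delta=\Delta_0$ on $C_p^c$, $\Delta=\Delta_1$ on $C_p$) needs no rectangularity since $\{S\to\cQ\}$ is an unconstrained product, and your explicit transience-plus-finiteness justification of the first leg is if anything more detailed than the paper's. The subtlety you flag about the second leg possibly exiting $C_p$ is handled at the same level of precision in the paper's own proof of Lemma \ref{lemma:wc_ctrl_implication}, so your proposal matches the paper's treatment.
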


Next, we will leverage the compactness of $\cP$ to construct a finite subset $B'$ of stationary adversary policies that yields a uniform lower bound on hitting probabilities. 

Assume the adversary is weakly communicating, $\cP$ is S-rectangular, and $\cQ$ and $\cP$ are compact. By Lemma \ref{lemma:wc_adv_implication}, for any stationary policy $p\in\cP$ and any $w\in S$, $y\in C_p$, there exist $\Delta$ and $N\le |S|$ (both possibly depending on $(w,y,p)$) such that $p_\Delta^{N}(y|w)>0$.

Note that if we fix $\Delta = \Delta^{w,y,p}$ and $N = N^{w,y,p}$ given by Lemma \ref{lemma:wc_adv_implication}, then the mapping $q\ra q_\Delta^N(y|w)$ is continuous in $q$. So, there must exist an open neighborhood $O_p\subseteq \cP$ of $p$ such that
\begin{equation}\label{eqn:wc_adv_prob_bound}
    q_\Delta^N(y|w) \geq \tfrac12 p_\Delta^N(y|w), \quad \forall q\in \overline{O_p}.
\end{equation}

We also consider another (not necessarily open) cover $\set{K_p:p\in\cP}$ of $\cP$. For any fixed $p\in\cP$, if $C_p = S$, i.e. the entire state space is communicating, then let $K_p = \cP$. On the other hand, if $C_p^c\neq \varnothing$, we construct $K_p$ as in the following lemma.

\begin{lemma}\label{lemma:construct_K_p}
Assume the adversary is weakly communicating, $\cP$ is S-rectangular, and $\cQ$ and $\cP$ are compact. Then, for each $p\in\cP$ with $C_p^c\neq \varnothing$, there exists an open neighborhood $K_p$ of $p$ such that
\begin{equation}\label{eqn:wc_adv_K_eval_bd}
0\leq \sup_{\Delta:S\to\cQ} e^\top (I-M^{p}_{\Delta,q})^{-1} e \leq \sup_{\Delta:S\to\cQ} e^\top (I-M^{p}_{\Delta,p})^{-1} e + 1 < \infty,\quad \forall q\in \overline{K_p},
\end{equation}
where both suprema are attained. In this expression, $M^{p}_{\Delta,q}$ is the principal submatrix of $q_\Delta$ on $C^c_p$ defined by $M^{p}_{\Delta,q}(s,s') = q_\Delta(s'|s)$ for $s,s'\in C^c_{p}$, and $e$ denotes the all-ones vector in $\R^{|C_p^c|}$.
\end{lemma}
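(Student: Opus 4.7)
The plan is to mirror the proof of Lemma \ref{lemma:construct_K_Delta}, interchanging the roles of the controller and the adversary, and leveraging the compactness of $\{S\to\cQ\}$ (which follows from compactness of $\cQ$) to apply Berge's maximum theorem twice.

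First, I would exploit weak communication of $p$ to establish invertibility at the base point. By Definition \ref{def:wc}, every state in $C_p^c$ is transient under any stationary controller policy. Hence for each $\Delta:S\to\cQ$, the matrix $M^p_{\Delta,p}$ is the transient block of $p_\Delta$ in the canonical decomposition, so $I - M^p_{\Delta,p}$ is invertible and a first-transition analysis gives $[(I-M^p_{\Delta,p})^{-1}e](z) = E_z^{\Delta,p} T_{C_p} < \infty$ for every $z\in C_p^c$.

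Next, I would promote this pointwise invertibility to a neighborhood via Berge's theorem. Consider $g(\Delta,q):=\det(I - M^p_{\Delta,q})$, which is jointly continuous since $(\Delta,q)\mapsto M^p_{\Delta,q}$ is entrywise continuous and the determinant is polynomial in the entries. Because $\{S\to\cQ\}$ is compact (as $\cQ$ is compact), Berge's maximum theorem \citep[VI.3]{berge1963topological} yields that
$$h(q):=\min_{\Delta:S\to\cQ}|g(\Delta,q)|$$
is continuous in $q$. At $q=p$, the previous step gives $h(p)>0$ (indeed $g(\Delta,p)\neq 0$ for every $\Delta$, and the minimum is attained by compactness). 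By continuity there is an open neighborhood $K'_p$ of $p$ on which $h(q)>0$, so $I - M^p_{\Delta,q}$ is invertible for every $(\Delta,q)\in\{S\to\cQ\}\times K'_p$.

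Finally, I would apply Berge once more to control the quadratic form. On $\{S\to\cQ\}\times K'_p$ the map
$$\phi(\Delta,q):= e^\top(I - M^p_{\Delta,q})^{-1} e$$
is continuous by continuity of matrix inversion, and the Neumann series representation on the transient block gives $\phi\ge 0$ whenever it is defined. Applying Berge's theorem with the compact set $\{S\to\cQ\}$ shows that $q\mapsto \max_{\Delta}\phi(\Delta,q)$ is continuous on $K'_p$ and the supremum is attained. Since $\max_{\Delta}\phi(\Delta,p)<\infty$, continuity furnishes a smaller open neighborhood $K_p\subseteq K'_p$ of $p$ on which $\max_{\Delta}\phi(\Delta,q)\le \max_{\Delta}\phi(\Delta,p)+1$ for all $q\in \overline{K_p}$ (shrinking $K_p$ if necessary so that its closure sits inside the open set where the bound holds), yielding \eqref{eqn:wc_adv_K_eval_bd}.

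The only delicate point is the very first step: one must invoke the weak-communication hypothesis on the \emph{adversary} side to guarantee that $C_p^c$ is transient under every controller $\Delta$ simultaneously, which is what secures invertibility of $I - M^p_{\Delta,p}$ uniformly in $\Delta$ and, through Berge's theorem, in a full neighborhood of $p$. Everything else is a symmetric rewrite of the controller-side argument, with the compactness of $\{S\to\cQ\}$ (rather than of $\cP$) playing the role needed to attain the extrema.
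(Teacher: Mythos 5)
Your proposal is correct and follows essentially the same route as the paper, which proves Lemma \ref{lemma:construct_K_p} by repeating the argument of Lemma \ref{lemma:construct_K_Delta} with the roles of controller and adversary swapped: transience of $C_p^c$ under every $\Delta$ gives invertibility of $I-M^p_{\Delta,p}$, and two applications of Berge's maximum theorem over the compact set $\set{S\ra\cQ}$ yield the neighborhood $K_p$ and the bound \eqref{eqn:wc_adv_K_eval_bd}. No substantive differences from the paper's argument.
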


\begin{proof}[Remarks on the proof of Lemma \ref{lemma:construct_K_p}]
The proof of Lemma \ref{lemma:construct_K_p} follows from the same argument as that of Lemma \ref{lemma:construct_K_Delta}. In particular, the invertibility of $I-M_{\Delta,p}^p$ follows from the definition of $M_{\Delta,p}^p$ being the transient part of $p_{\Delta}$; the continuity of $(\eta,q)\ra (I-M_{\eta,q}^p)\inv$ within $\set{S\ra\cQ}\times K'_p$, where $K_p'$ is an open neighborhood of $p$, follows from the continuity of 
$$q\ra \min_{\Delta :S\ra\cQ} \abs{\det(I-M_{\Delta,q}^p)}$$ and the invertibility of $I-M_{\Delta,p}^p$ for all $\Delta:S\ra\cQ$. 

These properties imply the finiteness and continuity  of 
$$q\ra \max_{\Delta:S\ra\cQ}e^\top(I-M_{\Delta,q}^p)\inv e$$
within some open neighborhood of $p$, implying Lemma \ref{lemma:construct_K_p}. 
\end{proof}

As in the proof of Theorem \ref{thm:wc_ctrl}, we have defined $O_p$ and $K_p$ for all $p\in\cP$. With these constructions, we define
$$G_p := O_p \cap K_{p}.$$
Note that when $C_p^c = \varnothing$, then $G_p = O_p\ni p$ is non-empty and open. When $C_p^c \neq \varnothing$, both $O_p$ and $K_p$ are open neighborhoods of $p$. Therefore, $\set{G_p:p\in\cP}$ is an open cover of $\cP$. Since $\cP$ is compact, there exists a finite subcover $\set{G_{p}:p\in B'}$ where $B' :=\set{p_1,\ldots,p_{|B'|}}\subseteq \cP$ is a finite subset. 

With this construction, it is clear that the proof of Lemma \ref{lemma:wc_exists_adv_lower_bd_prob} carries over, and we have the following result. 

\begin{lemma}\label{lemma:wc_exists_adv_lower_bd_prob}
Under the assumptions of Theorem \ref{thm:wc_adv}, there exists $\delta>0$ such that, for any stationary adversary policy $p\in\cP$ with $p\in G_{p_k}$ and any $y\in C_{p_k}, w\in S$, there exists $\Delta:S\to\cQ$ and $N \leq |S|$ such that $p_\Delta^{N}(y|w)\geq \delta$.
\end{lemma}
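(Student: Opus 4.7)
}
The plan is to mirror the proof of Lemma \ref{lemma:wc_ctrl_exists_adv_lower_bd_prob} with the roles of the controller and adversary swapped. The finite subcover $\set{G_{p}:p\in B'}$ of $\cP$, together with the key local stability estimate \eqref{eqn:wc_adv_prob_bound}, will do essentially all of the work, so the argument is a compactness-plus-continuity chase with no genuinely new obstacle.

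First I would fix $w\in S$ and, for each $j\leq |B'|$ and each $y\in C_{p_j}$, invoke Lemma \ref{lemma:wc_adv_implication} to obtain a stationary controller policy $\Delta^{w,y,p_j}:S\ra \cQ$ and an integer $N^{w,y,p_j}\leq |S|$ such that
\begin{equation*}
    (p_j)_{\Delta^{w,y,p_j}}^{N^{w,y,p_j}}(y|w)>0.
\end{equation*}
Since the index set $\set{(j,y): j\leq |B'|,\;y\in C_{p_j}}$ is finite (because $B'$ and $S$ are finite), I can set
\begin{equation*}
    \delta_w \;:=\; \min_{j\leq |B'|}\;\min_{y\in C_{p_j}}\;(p_j)_{\Delta^{w,y,p_j}}^{N^{w,y,p_j}}(y|w) \;>\; 0,
\end{equation*}
and then take $\delta := \tfrac12 \min_{w\in S}\delta_w > 0$, which is independent of the particular $(p,k,y,w)$ to be plugged in.

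Next I would verify the claim: given any $p\in\cP$ with $p\in G_{p_k}$ and any $y\in C_{p_k}$, $w\in S$, choose $\Delta := \Delta^{w,y,p_k}$ and $N := N^{w,y,p_k}\leq |S|$. Since $G_{p_k}\subseteq \overline{O_{p_k}}$ by construction, the local lower-bound estimate \eqref{eqn:wc_adv_prob_bound} applied with the fixed pair $(\Delta,N)$ gives
\begin{equation*}
    p_{\Delta}^{N}(y|w) \;\geq\; \tfrac12\,(p_k)_{\Delta}^{N}(y|w) \;\geq\; \tfrac12\,\delta_w \;\geq\; \delta,
\end{equation*}
which is exactly the desired uniform lower bound. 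The only spot to be mildly careful is that the estimate \eqref{eqn:wc_adv_prob_bound} was stated with the representatives $\Delta^{w,y,p_k}$ and $N^{w,y,p_k}$ fixed before choosing the neighborhood $O_{p_k}$; since in Lemma \ref{lemma:construct_K_p} and the preceding construction the neighborhoods are chosen to control precisely those fixed maps, this is consistent.

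I do not foresee a real obstacle. The slight nuisance, analogous to the controller-side proof, is simply bookkeeping: ensuring that $\delta$ is extracted as a single positive number uniform in $(p,k,y,w)$. This uses finiteness of $B'$, $S$, and each $C_{p_j}\subseteq S$, together with the $\tfrac12$-slack built into the definition of $O_p$ via \eqref{eqn:wc_adv_prob_bound}.
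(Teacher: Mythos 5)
Your proposal is correct and is essentially the paper's argument: the paper simply asserts that the proof of Lemma \ref{lemma:wc_ctrl_exists_adv_lower_bd_prob} carries over with the roles of controller and adversary swapped, and your write-up is exactly that transcription, using finiteness of $B'$, $S$, and $C_{p_j}$ to extract $\delta_w$ and the $\tfrac12$-slack in \eqref{eqn:wc_adv_prob_bound} over $\overline{O_{p_k}}\supseteq G_{p_k}$ to get the uniform bound. No gap.
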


Again, note that following the same argument in Remark \ref{rmk:cmpct_P_not_needed}, if we replace $G_{p_k}$ with $O_{p_k}$, we do not need the compactness of $\cQ$ to show Lemma \ref{lemma:wc_exists_adv_lower_bd_prob}. Moreover, as Lemma \ref{lemma:wc_ctrl_exists_adv_lower_bd_prob} implies Lemma \ref{lemma:wc_bd_E_hitting_time}, Lemma \ref{lemma:wc_exists_adv_lower_bd_prob} implies the following expected hitting time bound. 

\begin{lemma}\label{lemma:wc_adv_bd_E_hitting_time}
Under Lemma \ref{lemma:wc_exists_adv_lower_bd_prob}, there exists $\delta' > 0$ s.t. for any stationary adversary policy $p\in\cP$ with $p\in G_{p_k}$ and $y\in C_{p_k}$, there exists $\Delta:S\to\cQ$ such that 
$$\max_{w\in S}E^{\Delta,p}_w \tau_y \leq \frac{|S|}{\delta'},$$
where $\tau_y = \inf\set{t\geq 0: X_t =y}$. 
\end{lemma}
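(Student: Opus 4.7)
The plan is to mirror the proof of Lemma \ref{lemma:wc_bd_E_hitting_time} with the roles of the controller and adversary interchanged. The starting point is Lemma \ref{lemma:wc_exists_adv_lower_bd_prob}, which provides, for any fixed $w \in S$, $p \in G_{p_k}$, and $y \in C_{p_k}$, some controller $\Delta = \Delta^{w,y,p}$ and $N \le |S|$ with $p_{\Delta}^N(y|w) \ge \delta$. The key task is to upgrade this pointwise-in-$w$ statement to a single $\Delta$ (independent of $w$) for which the hitting probability is uniformly lower-bounded over the initial state $w$.

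To do so, I would construct $\Delta = \Delta^{y,p}$ algorithmically, exactly analogous to the iterative construction of $q$ in the proof of Lemma \ref{lemma:wc_bd_E_hitting_time}, but now assigning $\Delta(\cdot|s) \in \cQ$ state by state while keeping $p$ fixed. Initialize $V = \{y\}$ with an arbitrary $\Delta(\cdot|y) \in \cQ$. At each iteration, pick an unassigned $s_0 \in V^c$ and apply Lemma \ref{lemma:wc_exists_adv_lower_bd_prob} to obtain $\Delta^{s_0,y,p}$ and $N \le |S|$ with $p_{\Delta^{s_0,y,p}}^N(y|s_0) \ge \delta$. By the pigeonhole argument on at most $|S|^N$ paths and the usual ``trimming'' of repetitions, extract a non-repeating path $s_0 \to s_1 \to \cdots \to s_k = y$ with each one-step probability (under $p_{\Delta^{s_0,y,p}}$) at least $\delta |S|^{-N}$. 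Let $j = \min\{i \ge 1 : s_i \in V\}$ and assign $\Delta(\cdot|s_i) := \Delta^{s_0,y,p}(\cdot|s_i)$ for $i \le j-1$, then update $V \leftarrow V \cup \{s_0, \ldots, s_{j-1}\}$. After at most $|S|$ iterations, $V = S$ and, by concatenating the freshly built path with the already-recorded path from $s_j \in V$ to $y$, every state can reach $y$ in at most $|S|$ steps under $p_{\Delta}$ with probability at least $\delta' := (\delta |S|^{-N})^{|S|} > 0$. Hence $\min_{w \in S} P_w^{\Delta, p}(\tau_y \le |S|) \ge \delta'$.

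Then, I would convert this uniform probability lower bound into an expected-hitting-time bound by the same renewal-type induction as in \eqref{eqn:to_use_renewal_bound_tauy}. The stopping-time property $\{\tau_y > k|S|\} \in \cH_{k|S|}$, combined with the (strong) Markov property of $(X_t, A_t)$ under $P_w^{\Delta, p}$ (which holds because both $\Delta$ and $p$ are stationary), yields $P_w^{\Delta, p}(\tau_y > m|S|) \le (1-\delta')^m$ by induction on $m$. A geometric sum then gives $E_w^{\Delta, p}[\tau_y] \le |S|/\delta'$ for every $w \in S$, which is the desired bound.

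The main obstacle is subtle and already embedded in the iterative construction: since $\Delta(\cdot|s)$, once assigned, is never changed, the path we extract at a later iteration must respect the earlier assignments whenever it enters $V$. This is why the construction cuts the new path at the first index $j$ where $s_j \in V$ and relies on the invariant that every state in $V$ already has a path to $y$ under the current $\Delta$ with per-step probability $\ge \delta |S|^{-N}$. Maintaining this invariant through all $\le |S|$ rounds is the crux, and it is the direct analogue of the corresponding step in Lemma \ref{lemma:wc_bd_E_hitting_time}; everything else is a verbatim transcription. As noted earlier in the paper (cf. Remarks \ref{rmk:cmpct_P_not_needed} and the analogous observation after Lemma \ref{lemma:wc_exists_adv_lower_bd_prob}), compactness of $\cQ$ is not needed for this argument if the adversary is (strongly) communicating, so the same proof covers assumption (2).
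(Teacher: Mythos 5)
Your proposal is correct and follows essentially the same route as the paper's own proof: an iterative, state-by-state construction of a single stationary controller $\Delta^{y,p}$ (cutting each freshly extracted non-repeating path at its first entry into the assigned set $V$, which preserves the invariant that all assigned states reach $y$ with per-step probability at least $\delta|S|^{-N}$), followed by the same renewal-type induction converting $\min_w P_w^{\Delta,p}(\tau_y\le|S|)\ge\delta'$ into $\max_w E_w^{\Delta,p}\tau_y\le|S|/\delta'$. The paper's appendix proof is a near-verbatim transcription of Lemma \ref{lemma:wc_bd_E_hitting_time} with the players' roles swapped, exactly as you describe.
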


\begin{proof}[Proof of Lemma \ref{lemma:wc_adv_bd_E_hitting_time} based on the proof of Lemma \ref{lemma:wc_bd_E_hitting_time}]
Fix $p\in\cP$ and $y\in S$. By Lemma \ref{lemma:wc_exists_adv_lower_bd_prob}, for any $w\in S$ there exist a stationary controller $\Delta:S\ra\cQ$ and $N\le |S|$ such that 
\[
P^{\Delta,p}_w(\tau_y \le |S|) \ge p_\Delta^N(y|w) \ge \delta .
\]

As in the proof of Lemma \ref{lemma:wc_bd_E_hitting_time}, it sufficies to  show that for some $\delta'>0$ independent of $y$, there is a controller $\eta:S\ra\cQ$ such that 
$$ \min_{w\in S}P_w^{\eta,p}(\tau_y\le |S|)\geq \delta'.$$
That is, the choice of controller can be made independent of the initial state $w$.

To this end, mirroring the proof of Lemma \ref{lemma:wc_bd_E_hitting_time}, let us define $\eta=\eta^{y,p}$ algorithmically as follows.

We initialize by assigning $\eta(\cd|y)=\phi$ for an arbitrary $\phi\in\cQ$. Then let $V=\set{y}$ be the assigned states, and $V^c=S\setminus V$ the unassigned states.  
\begin{enumerate}
    \item Choose any unassigned state $s_0\in V^c$. By Lemma \ref{lemma:wc_exists_adv_lower_bd_prob}, there exists $\Delta:S\ra\cQ$ and $N\le |S|$ such that $p_\Delta^N(y|s_0)\ge \delta$. Following the argument in the proof of Lemma \ref{lemma:wc_bd_E_hitting_time}, this implies that there exists a non-repeating path $s_0\ra s_1\ra \dots \ra s_k=y$ with $k\leq N$ and $p_\Delta(s_{i+1}|s_i)\ge \delta|S|^{-N}$ for $i=0,\dots,k-1$.

    \item Let $j=\min\{i\ge 1:s_i\in V\}$ be the first index on the path already in $V$. For $i=0,\dots,j-1$ set $\eta(\cd|s_i):=\Delta(\cd|s_i)\in\cQ$. Then $p_\eta(s_{i+1}|s_i)\ge \delta|S|^{-N}$ for all such $i$. Moreover, since $s_j\in V$, there is a path from $s_j$ to $y$ already recorded with each edge at least $\delta|S|^{-N}$. Concatenating, we obtain a path from $s_0$ to $y$ with all edge probabilities bounded below by $\delta|S|^{-N}$. Record this path. 
    \item Update $V\la V\cup\{s_0,\dots,s_{j-1}\}$. 

\end{enumerate}

Repeat until $V=S$.

The algorithm terminates in at most $|S|$ iterations, producing $\eta\in\{S\to\cQ\}$, since we always assign $\eta(\cd|s)\in\cQ$. Moreover, it produces a directed graph whose edges correspond to positive transition probabilities at least $\delta|S|^{-N}$ under $p_\eta(\cd|\cd)$, ensuring that every state can reach $y$ in at most $|S|$ steps. Therefore,
\[
\min_{w\in S}P_w^{\eta,p}(\tau_y\le |S|)\ge (\delta|S|^{-N})^{|S|} =:\delta' >0.
\]

This and the Markov renewal argument in the proof of Lemma \ref{lemma:wc_bd_E_hitting_time} imply Lemma \ref{lemma:wc_adv_bd_E_hitting_time}. 
\end{proof}

\subsection*{Decomposing $\spnorm{v'_\gamma}$}
First, note that $v'_\gamma$ solves \eqref{eqn:discounted_inf_sup_eqn}. For each $\epsilon > 0$, there exist $p_\epsilon\in \cP$ that is $\epsilon$-optimal in the following sense
$$v'_\gamma(s) = \inf_{p\in \cP} \sup_{\pi\in \PiH}v_\gamma^{\pi,\kappa}(s)  \leq \sup_{\pi\in\PiH} v_\gamma^{\pi,p_\epsilon}(s) + \epsilon, \quad \forall s\in S.$$
Since $\set{G_{p}:p\in B'}$ covers $\cP$, there is $p_k\in B'$ s.t. $p_\epsilon\in G_{p_k}$. 

Moreover, by the stationary optimality of MDPs, there exists $\Delta_\epsilon:S\ra\cQ$ such that
$$\sup_{\pi\in\PiH} v_\gamma^{\pi,p_\epsilon}(s)\leq  v_\gamma^{\Delta_\epsilon,p_\epsilon}(s) + \epsilon, \quad \forall s\in S.$$

Similar to the decomposition for $\spnorm{v^*_\gamma}$, we let $$s_\vee\in\argmax{s\in S}v_\gamma'(s), \quad s_\wedge\in \argmin{s\in S}v'_\gamma(s),\quad\text{and}\quad y_\vee\in\argmax{s\in G_{p_k}}v'_\gamma(s). $$ Then, we split $\spnorm{v'_\gamma}$ as follows: 
\begin{equation}\label{eqn:to_use_wc_adv_split}
\begin{aligned}
    \spnorm{v_\gamma'} &= v_\gamma'(s_\vee) - v_{\gamma}'(s_\wedge)\\
    &\leq \sup_{\pi\in\PiH}v_\gamma^{\pi,p_\epsilon}(s_\vee) - \sup_{\pi\in\PiH}v_\gamma^{\pi,p_\epsilon}(s_\wedge) +\epsilon\\
    &\leq v_{\gamma}^{\Delta_\epsilon,p_\epsilon}(s_\vee)  - \sup_{\pi\in\PiH}v_\gamma^{\pi,p_\epsilon}(s_\wedge) + 2\epsilon\\
    &\leq \underbrace{v_{\gamma}^{\Delta_\epsilon,p_\epsilon}(s_\vee) - v_\gamma^{\Delta_\epsilon,p_\epsilon}(y_\vee)}_{\xi_2} + \underbrace{v_{\gamma}^{\Delta_\epsilon,p_\epsilon}(y_\vee) - v_\gamma^{\pi,p_\epsilon}(s_\wedge)}_{\xi_1} + 2\epsilon
\end{aligned}
\end{equation}
for any $\pi\in\PiH$. 

\subsection*{Upper-Bounding $\xi_1$}

Following the same construction as in the proof of Theorem \ref{thm:wc_ctrl} (but swapping the roles of controller and adversary), we bound $\xi_1$ by considering a two-phase history-dependent control policy as follows. 

Fix $y\in C_{p_k}$ where $p_\epsilon\in G_{p_k}$. By Lemma \ref{lemma:wc_adv_bd_E_hitting_time} there exists a stationary controller policy $\eta$ such that
$$\max_{w\in S}E^{\eta,p_\epsilon}_w \tau_y \leq \frac{|S|}{\delta'}$$
Thus, define a history-dependent two-phase controller $\pi=(\pi_0,\pi_1,\ds)$ by
\[
\pi_t(\cdot|h_t) = 
\begin{cases} 
\eta(\cdot|s_t), & \text{if } s_0,\ds,s_{t}\neq y,\\[2pt]
\Delta_\epsilon(\cdot|s_t), & \text{otherwise.}
\end{cases}
\]

Then, we have that for all $x\in S$
\begin{equation}\label{eqn:to_use_adv_vpi_hd_bd}
    \begin{aligned}
        v_{\gamma}^{\pi,p_\epsilon}(x)
        &= E^{\pi,p_\epsilon}_x \sum_{k=0}^{\tau_y-1} \gamma^k r(X_k,A_k) + E^{\pi,p_\epsilon}_x \gamma^{\tau_y}\sum_{k=\tau_y}^{\infty} \gamma^{k-\tau_y} r(X_k,A_k)\\
        &\geq  E^{\pi,p_\epsilon}_x \gamma^{\tau_y}E^{\pi,p_\epsilon}_x\sqbkcond{\sum_{k=\tau_y}^{\infty} \gamma^{k-\tau_y} r(X_k,A_k)}{\cH_{\tau_y}}\\
        &=  v_\gamma^{\Delta_\epsilon,p_\epsilon}(y)E^{\pi,p_\epsilon}_x \gamma^{\tau_y}.
    \end{aligned}    
    \end{equation}
where the last equality follows from the same argument as in \eqref{eqn:to_use_second_half_eq}. Therefore, we have that 
$$\begin{aligned}
    v_{\gamma}^{\Delta_\epsilon,p_\epsilon}(y) - v_\gamma^{\pi,p_\epsilon}(x) 
    &\leq v_{\gamma}^{\Delta_\epsilon,p_\epsilon}(y) - v_\gamma^{\Delta_\epsilon,p_\epsilon}(y)E^{\pi,p_\epsilon}_x \gamma^{\tau_{y}}\\
    &= (1-\gamma)v_\gamma^{\Delta_\epsilon,p_\epsilon}(y) E^{\pi,p_\epsilon}_x \frac{1-\gamma^{\tau_{y}}}{1-\gamma}\\
    &\leq E^{\pi,p_\epsilon}_x \tau_{y}
\end{aligned}$$
where the last inequality follows from $0\leq v_\gamma^{\Delta_\epsilon,p_\epsilon}(y)\le1/(1-\gamma)$ and $(1-\gamma^t)/(1-\gamma) = \sum_{k=0}^{t-1}\gamma^k\leq t$. 

On the other hand, mirroring \eqref{eqn:to_use_E_kappa_tau_y}, we have that 
$E^{\pi,p_\epsilon}_x\tau_y = E_x^{\eta,p_\epsilon}\tau_y$. Therefore, choosing $x = s_\wedge$, $y = y_\vee\in C_{p_k}$, and $\eta = \eta^{y_\vee,p_\epsilon}$ given by Lemma \ref{lemma:wc_adv_bd_E_hitting_time}, we conclude that
$$\xi_1\leq E^{\pi,p_\epsilon}_{s_\wedge} \tau_{y_\vee} = E^{\eta,p_\epsilon}_{s_\wedge} \tau_{y_\vee} \leq \frac{|S|}{\delta'}. $$
\subsection*{Upper-Bounding $\xi_2$}

Let $T_k:= \inf\set{t\geq0: X_t\in C_{p_k}}$. To bound $\xi_2$, note that
\begin{align*}v_{\gamma}^{\Delta_\epsilon,p_\epsilon}(s_\vee) - v_\gamma^{\Delta_\epsilon,p_\epsilon}(y_\vee) &\leq E_{s_\vee}^{\Delta_\epsilon,p_\epsilon}\sum_{t=0}^{T_k-1}\gamma^t r(X_t,A_t) + E_{s_\vee}^{\Delta_\epsilon,p_\epsilon}\sum_{t=T_k}^{\infty}\gamma^t r(X_t,A_t)  - v_\gamma^{\Delta_\epsilon,p_\epsilon}(y_\vee) \\
&\stackrel{(i)}{=} E_{s_\vee}^{\Delta_\epsilon,p_\epsilon}\sum_{t=0}^{T_k-1}\gamma^t r(X_t,A_t) + E_{s_\vee}^{\Delta_\epsilon,p_\epsilon} E_{X_{T_k}}^{\Delta_\epsilon,p_\epsilon}\sum_{t=0}^{\infty}\gamma^t r(X_t,A_t)  - v_\gamma^{\Delta_\epsilon,p_\epsilon}(y_\vee) \\
&\stackrel{(ii)}{=} E_{s_\vee}^{\Delta_\epsilon,p_\epsilon}\sum_{t=0}^{T_k-1}\gamma^t r(X_t,A_t) + E_{s_\vee}^{\Delta_\epsilon,p_\epsilon} v_\gamma^{\Delta_\epsilon,p_\epsilon}(X_{T_k})  - v_\gamma^{\Delta_\epsilon,p_\epsilon}(y_\vee) \\
&\stackrel{(iii)}{\leq} E_{s_\vee}^{\Delta_\epsilon,p_\epsilon}\sum_{t=0}^{T_k-1}\gamma^t r(X_t,A_t)\\
&\leq E_{s_\vee}^{\Delta_\epsilon,p_\epsilon} T_k,
\end{align*}
where $(i)$ applies the strong Markov property, $(ii)$ recalls the definition of $v_\gamma^{\Delta_\epsilon,p_\epsilon}$, and $(iii)$ follows from $y_\vee\in C_{p_k}$ achieving the argmax.  

Then, following the same argument as in Theorem \ref{thm:wc_ctrl} and the construction of $K_{p}$, we conclude that
$$\xi_2\leq \max_{k\leq|B'|}\sup_{p\in\overline{G_{p_k}},\; \Delta:S\ra\cQ} E_{s_\vee}^{\Delta,p}T_{k}\leq 1+\sum_{k\leq |B'|}\sup_{\Delta:S\ra\cQ}e^\top(I-M_{\Delta,p_k}^{p_k})e < \infty.$$

\begin{remark}\label{rmk:wc_adv_comm_case}
If the adversary is communicating, then $C_p = S$. In this case, Lemma \ref{lemma:wc_adv_bd_E_hitting_time} holds with $y\in S$ arbitrary, and the term corresponding to transient states is vacuous, and $\xi_2\leq 0$. Thus, the compactness of $\cQ$ is not required.
\end{remark}

Combining the bounds for $\xi_1$ and $\xi_2$, we have shown that $\spnorm{v'_\gamma}$ is uniformly bounded. 
By the same argument for Theorem \ref{thm:existence_of_sol}, the uniform boundedness of $\spnorm{v'_\gamma}$ implies the existence of $(u',\alpha')$ solving the average-reward inf-sup Bellman equation \eqref{eqn:inf_sup_eqn_const_gain}. 
\end{proof}

\section{Proof of Theorem \ref{thm:suff_cond_for_swap}}\label{sec:proof:thm:suff_cond_for_swap}
\begin{proof}
We first show statement (1). 

Note that the mapping $$(\phi,p_s)\ra \sum_{a\in A,\;s'\in S }\phi(a)p(s'|s,a)[r(s,a) - \alpha' + u'(s')] = E_{\phi,p_s}[r(s,A_0) - \alpha' +u'(X_1)]$$
is bi-linear in $\phi$ and $p_s$. Since both $\cQ$ and $\cP_s, \; s\in S$ are convex and one of them is compact, by Sion's minimax Theorem (Corollary 3.3 in \citet{sion1958minmax}), 
we have that for all $s\in S$, $$\begin{aligned}
 u'(s) &=\inf_{p_s\in\cP_s}\sup_{\phi\in\cQ} E_{\phi,p_s}[r(s,A_0) - \alpha' +u'(X_1)]   \\
 &= \sup_{\phi\in\cQ} \inf_{p_s\in\cP_s}E_{\phi,p_s}[r(s,A_0) - \alpha' +u'(X_1)].
\end{aligned}$$
Hence, $(u',\alpha')$ solves \eqref{eqn:r_bellman_eqn_const_gain}. 

To prove the second statement, we define 
$$q'(s,a) = r(s,a) -\alpha' + \inf_{\nu\in\cP_{s,a}}\sum_{s'\in S}\nu(s')u'(s').$$ Notice that since $\set{\delta_a:a\in A}\subset\cQ$, we have \begin{equation}\label{eqn:to_use_up_lower_bd}\begin{aligned}
    \max_{a\in A}q'(s,a) &= \sup_{\phi\in\cQ} \sum_{a\in A}\phi(a)q'(s,a)\\
    &= \sup_{\phi\in\cQ}\sum_{a\in A}\phi(a)\inf_{\nu\in\cP_{s,a}}\sum_{s'\in S}\nu(s')[r(s,a) - \alpha'+u'(s')]\\
    &\stackrel{(i)}{=}\sup_{\phi\in\cQ}\inf_{p_s\in\cP_{s}}\sum_{a\in A,\,s'\in S}\phi(a)p(s'|s,a)[r(s,a) - \alpha'+u'(s')]\\
    &\leq u'(s)
\end{aligned} \end{equation}
where $(i)$ follows from SA-rectangularity that $\cP_s = \bigtimes_{a\in A}\cP_{s,a}$. 

On the other hand, 
\begin{equation}\label{eqn:to_use_up_upper_bd}
\begin{aligned}
    u'(s) &= \inf_{p_s\in\cP_{s}}\sup_{\phi\in\cQ}\sum_{a\in A,\,s'\in S}\phi(a)p(s'|s,a)[r(s,a) - \alpha'+u'(s')]\\
    &= \inf_{\nu_{1}\in\cP_{s,a_1}} \ds \inf_{\nu_{|A|}\in\cP_{s,a_{|A|}}}\sup_{\phi\in\cQ}\sum_{i=1}^{|A|}\phi(a_i)\sqbk{r(s,a_i) - \alpha'+\sum_{s'\in S}\nu_{i}(s')u'(s')}.
\end{aligned}
\end{equation}
For any $\delta > 0,$ choose $\nu'_i\in\cP_{s,a_i}$ so that $$\sum_{s'\in S}\nu'_i(s')u'(s') \leq \inf_{\nu\in\cP_{s,a_i}} \sum_{s'\in S}\nu(s')u'(s')+\delta.$$ Then, continue from \eqref{eqn:to_use_up_upper_bd}, we have that
$$\begin{aligned}u'(s)&\leq \sup_{\phi\in\cQ}\sum_{i=1}^{|A|}\phi(a_i)\sqbk{r(s,a_i) - \alpha'+\sum_{s'\in S}\nu'_{i}(s')u'(s')}\\
&\leq \delta + \sup_{\phi\in\cQ}\sum_{i=1}^{|A|}\phi(a_i)\sqbk{r(s,a_i) - \alpha'+\inf_{\nu\in\cP_{s,a_i}}\sum_{s'\in S}\nu(s')u'(s')}\\
&= \delta + \sup_{\phi\in\cQ}\sum_{a\in A}\phi(a)q'(s,a)\\
&= \delta + \max_{a\in A}q'(s,a).
\end{aligned}
$$
Since $\delta > 0$ can be arbitrary small, combining this with \eqref{eqn:to_use_up_lower_bd}, we conclude that $u'(s) = \max_{a\in A}q'(s,a)$ for all $s\in S$. 

Finally, we note that from \eqref{eqn:to_use_up_lower_bd},
$$u'(s) = \max_{a\in A}q'(s,a) = \sup_{\phi\in\cQ}\inf_{p_s\in\cP_{s}}\sum_{a\in A,\,s'\in S}\phi(a)p(s'|s,a)[r(s,a) - \alpha'+u'(s')].$$
Therefore, $(u',\alpha')$ solves \eqref{eqn:r_bellman_eqn_const_gain}, completing the proof. \end{proof}

\section{Proof of Corollary \ref{cor:overlapped_recurrent_class}}\label{sec:proof_cor_overlapped_recurrent_class}

We show the two claims of Corollary \ref{cor:overlapped_recurrent_class} by discussing how the overlap-connectedness assumptions are related to weak communication. 

\begin{lemma}[Overlap-Connected Controller]\label{lemma:overlap_ctrl}
If $\cR_{\Delta}$ is overlap-connected and all $\cP_s$, $s\in S$ are convex, then $\Delta$ is weakly communicating with $C_{\Delta} = \bigcup_{R\in\cR_{\Delta}}R. $
\end{lemma}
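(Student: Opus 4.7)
My plan is to verify the two clauses of the weak-communication definition for $C_\Delta := \bigcup_{R \in \cR_\Delta} R$. The transience of states in $C_\Delta^c$ is essentially by definition: if $s \notin C_\Delta$, then $s$ lies in no closed communicating class of $p_\Delta$ for any $p \in \cP$, and in a finite-state Markov chain every state outside a closed communicating class is transient, so $s$ is transient under $p_\Delta$ for every stationary adversary $p$. Thus the remaining task is to show that for every $s, s' \in C_\Delta$ there exist $p \in \cP$ and $N \ge 1$ with $p_\Delta^N(s'|s) > 0$.

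\textbf{Gluing via convexity.} For this reachability step, I would use the overlap-connectedness of $\cR_\Delta$ to produce a finite chain $R_0, R_1, \ldots, R_k \in \cR_\Delta$ with $s \in R_0$, $s' \in R_k$, and $R_i \cap R_{i+1} \neq \varnothing$. I then pick witnessing kernels $p_i \in \cP$ making $R_i$ a closed communicating class of $(p_i)_\Delta$, and bridging states $t_i \in R_i \cap R_{i+1}$ for $0 \le i \le k-1$, with the conventions $t_{-1} := s$ and $t_k := s'$. Setting $p := (k+1)^{-1} \sum_{i=0}^k p_i$, the convexity of each $\cP_s$ together with the product structure $\cP = \bigtimes_{s \in S} \cP_s$ gives $p \in \cP$. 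Since $p_\Delta(x'|x) \ge (k+1)^{-1} (p_i)_\Delta(x'|x)$ for every $i$ and every pair $x, x'$, any one-step transition positive under some $(p_i)_\Delta$ remains positive under $p_\Delta$. For each $0 \le i \le k$, both endpoints $t_{i-1}$ and $t_i$ lie in $R_i$, which is a closed communicating class of $(p_i)_\Delta$, so there is $N_i \ge 1$ with $(p_i)_\Delta^{N_i}(t_i | t_{i-1}) > 0$, hence $p_\Delta^{N_i}(t_i | t_{i-1}) > 0$. Concatenating the segments yields $p_\Delta^N(s'|s) > 0$ for $N := \sum_{i=0}^k N_i$.

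\textbf{Main obstacle.} The only technical subtlety is the combinatorial bookkeeping around the bridging states $t_i$: one must verify that every $t_i$ sits simultaneously in two consecutive classes, so that each leg of the path is entirely contained in a single closed communicating class of a single $(p_i)_\Delta$, where positivity of some multi-step transition is available. Once that structure is in place, convexity of $\cP_s$ does the real work, consolidating the finitely many witnessing kernels into one $p \in \cP$ that inherits all of their required positivities.
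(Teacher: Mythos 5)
Your proposal is correct and follows essentially the same route as the paper: both proofs handle transience of $C_\Delta^c$ by noting such states lie in no closed recurrent class of any $p_\Delta$, and both use convexity of the $\cP_s$ together with S-rectangularity to merge the witnessing kernels along an overlap chain into a single $p\in\cP$, then concatenate positive-probability paths through the bridging states. The only (immaterial) difference is that your mixture $p=(k+1)^{-1}\sum_i p_i$ depends on the pair $(s,s')$, whereas the paper builds one state-wise mixture $q$ (averaging, at each $s$, the kernels of all classes containing $s$) that works simultaneously for all pairs; the weak-communication definition permits either.
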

\begin{proof}[Proof of Lemma \ref{lemma:overlap_ctrl}] Let $C_{\Delta} = \bigcup_{R\in\cR_{\Delta}}R$. By the definition of $\cR_{\Delta}$, for each $R\in \cR_{\Delta}$, there exists $p^{(R)}$ such that $p^{(R)}_{\Delta}$ has $R$ as one of its closed recurrent classes. 

Let $\cR_{\Delta}(s):=\set{R\in \cR_{\Delta}:s\in R}$. Then, define $$q(\cd|s,\cd):=\begin{cases}\displaystyle\frac{1}{|\cR_{\Delta}(s)|} \sum_{R\in \cR_{\Delta}(s)}p^{(R)}(\cd|s,\cd), & \text{if } s\in C_{\Delta},\\
\displaystyle \text{any }p_s\in\cP_s, & \text{otherwise.}
\end{cases}$$
By S-rectangularity and convexity, $q\in\cP$. 

By the overlap-connectedness of $\cR_{\Delta}$, for all $s,s'\in C_{\Delta}$ there exists $R_0,\ds,R_k$ such that $s\in R_0$, $s'\in R_k$, and $R_i\cap R_{i+1}\neq \varnothing$ for all $0\leq i\leq k-1$. Then, pick an arbitrary element $y_{i+1}\in R_i\cap R_{i+1}$. 

Let $y_0 = s$ and $y_{k+1} = s'$. As for all $i$, $y_i$ and $y_{i+1}$ are contained in $R_i$, there exists a path of positive probability under $p^{(R_i)}_{\Delta}$ that connects $y_i$ and $y_{i+1}$. By the definition of $q$, $q_{\Delta}(\cd|s)\geq \frac{1}{|\cR_{\Delta}(s)|}p_\Delta^{(R_i)}(\cd|s)$ for all $s\in C_{\Delta}$. It follows that the path leads $y_i$ to $y_{i+1}$ has positive probability under $q$ for each $i$. Therefore, $s$ leads to $s'$ under $q_\Delta$; i.e. there exists $N\geq 1$ s.t. $q_{\Delta}^N(s'|s) > 0$. 

To verify weak communication, we also need to check that any $z\in C_{\Delta}^c$ is transient under all $p\in\cP$. This is straightforward as, by definition, $z\in C_{\Delta}^c$ implies that $z\notin R$ for any $R\in\cR_{\Delta}$; i.e. it is transient under all $p_{\Delta}$, $p\in\cP$. This proves Lemma \ref{lemma:overlap_ctrl}.
\end{proof}

Similarly, we also show that if $\cQ$ is convex, an overlap-connected $\cR_{p}$ will imply the weak communication of $p\in\cP$. 

\begin{lemma}[Overlap-Connected Controller]\label{lemma:overlap_adv}
If $\cR_{p}$ is overlap-connected and $\cQ$ is convex, then $p$ is weakly communicating with $C_{p} = \bigcup_{R\in\cR_{p}}R. $
\end{lemma}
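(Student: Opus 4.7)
The plan is to mirror the proof of Lemma \ref{lemma:overlap_ctrl} with the roles of controller and adversary exchanged, taking advantage of the convexity of $\cQ$ (rather than the convexity of $\cP_s$) to average controller decision rules instead of adversary kernels. First I would set $C_p := \bigcup_{R \in \cR_p} R$. By the definition of $\cR_p$, for each $R \in \cR_p$ there exists a stationary controller $\Delta^{(R)}: S \to \cQ$ such that $R$ is a closed communicating class of $p_{\Delta^{(R)}}$. For each $s \in C_p$ let $\cR_p(s) := \{R \in \cR_p : s \in R\}$, which is nonempty, and define
$$
\eta(\cdot \mid s) :=
\begin{cases}
\displaystyle \frac{1}{|\cR_p(s)|} \sum_{R \in \cR_p(s)} \Delta^{(R)}(\cdot \mid s), & s \in C_p, \\
\text{any } \phi \in \cQ, & s \in C_p^c.
\end{cases}
$$
Since $\cQ$ is convex, $\eta(\cdot \mid s) \in \cQ$ for every $s$, so $\eta: S \to \cQ$.

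Next I would show that for every $s, s' \in C_p$, there exists $N \geq 1$ with $p_\eta^N(s' \mid s) > 0$. By overlap-connectedness of $\cR_p$, there exist $R_0, \ldots, R_k \in \cR_p$ with $s \in R_0$, $s' \in R_k$, and $R_i \cap R_{i+1} \neq \varnothing$ for $0 \leq i \leq k-1$. Pick any $y_{i+1} \in R_i \cap R_{i+1}$ for $0 \leq i \leq k-1$, and set $y_0 = s$, $y_{k+1} = s'$. Within each $R_i$, both $y_i$ and $y_{i+1}$ belong to the closed communicating class of $p_{\Delta^{(R_i)}}$, so there exists a finite path of positive probability connecting $y_i$ to $y_{i+1}$ under $p_{\Delta^{(R_i)}}$. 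The key observation is that for $s \in R_i \subseteq C_p$, the averaging yields $\eta(\cdot \mid s) \geq \frac{1}{|\cR_p(s)|} \Delta^{(R_i)}(\cdot \mid s)$ componentwise, which (since $p$ is fixed and the map $\Delta \mapsto p_\Delta$ is linear in $\Delta$) gives $p_\eta(\cdot \mid s) \geq \frac{1}{|\cR_p(s)|} p_{\Delta^{(R_i)}}(\cdot \mid s)$. Hence the positive-probability path from $y_i$ to $y_{i+1}$ under $p_{\Delta^{(R_i)}}$ remains positive under $p_\eta$. Concatenating the $k+1$ subpaths yields a positive-probability path from $s$ to $s'$ under $p_\eta$, so $p_\eta^N(s' \mid s) > 0$ for the total length $N$.

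Finally, I would verify the transience requirement of Definition \ref{def:wc}: for any $z \in C_p^c$, by definition $z \notin R$ for every $R \in \cR_p$, so $z$ does not belong to any closed communicating class of $p_\Delta$ for any $\Delta: S \to \cQ$, i.e., $z$ is transient under every $p_\Delta$. Combining this with the previous step, $p$ is weakly communicating with communicating class $C_p = \bigcup_{R \in \cR_p} R$, proving the lemma. The only step that requires any care is the linearity argument $p_\eta \geq \frac{1}{|\cR_p(s)|} p_{\Delta^{(R_i)}}$, which is immediate but is the one place where convexity of $\cQ$ (rather than of $\cP_s$) is essential. Corollary \ref{cor:overlapped_recurrent_class}'s second claim then follows by combining Lemma \ref{lemma:overlap_adv} with Theorem \ref{thm:wc_adv}.
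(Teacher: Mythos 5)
Your proof is correct and follows essentially the same route as the paper's: define $C_p=\bigcup_{R\in\cR_p}R$, average the class-specific controllers $\Delta^{(R)}$ over $\cR_p(s)$ (using convexity of $\cQ$), exploit $p_\eta(\cdot|s)\geq \frac{1}{|\cR_p(s)|}p_{\Delta^{(R_i)}}(\cdot|s)$ to concatenate positive-probability paths through the overlaps, and conclude transience of states outside $C_p$ since they lie in no closed communicating class of any $p_\Delta$. No gaps.
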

\begin{proof}[Proof of Lemma \ref{lemma:overlap_adv}]. Let $C_{p} = \bigcup_{R\in\cR_{p}}R$. By the definition of $\cR_{p}$, for each $R\in \cR_{p}$, there exists $\Delta_R$ such that $p_{\Delta_R}$ has $R$ as one of its closed recurrent classes. 

Let $\cR_{p}(s):=\set{R\in \cR_{p}:s\in R}$. Then, define $$\eta(\cd|s):=\begin{cases}\displaystyle\frac{1}{|\cR_{p}(s)|} \sum_{R\in \cR_{p}(s)}\Delta_R(\cd|s), & \text{if } s\in C_{p},\\
\displaystyle \text{any }\mu\in \cQ, & \text{otherwise.}
\end{cases}$$
By convexity of $\cQ$, $\eta\in \set{S\ra\cQ}$. 

By the overlap-connectedness of $\cR_{p}$, for all $s,s'\in C_{p}$ there exists $R_0,\ds,R_k$ such that $s\in R_0$, $s'\in R_k$, and $R_i\cap R_{i+1}\neq \varnothing$ for all $0\leq i\leq k-1$. Then, pick an arbitrary element $y_{i+1}\in R_i\cap R_{i+1}$. 

Let $y_0 = s$ and $y_{k+1} = s'$. As for all $i$, $y_i$ and $y_{i+1}$ are contained in $R_i$, there exists a path of positive probability under $p_{\Delta_{R_i}}$ that connects $y_i$ and $y_{i+1}$. By the definition of $\eta$, 
$$p_{\eta}(\cd|s) = \sum_{a\in A}\eta(a|s)p(\cd|s,a)\geq \frac{1}{|\cR_{p}(s)|}\sum_{a\in A}\Delta_{R_i}(a|s)p(\cd|s,a) = \frac{1}{|\cR_{p}(s)|} p_{\Delta_{\R_i}}(\cd|s)$$ for all $s\in C_{p}$. It follows that the path leads $y_i$ to $y_{i+1}$ has positive probability under $p_\eta$ for each $i$. Therefore, $s$ leads to $s'$ under $p_\eta$; i.e. there exists $N\geq 1$ s.t. $p_{\eta}^N(s'|s) > 0$. 

To verify weak communication, we also need to check that any $z\in C_{p}^c$ is transient under all $p\in\cP$. This is straightforward as, by definition, $z\in C_{p}^c$ implies that $z\notin R$ for any $R\in\cR_{p}$; i.e. it is transient under all $p_{\Delta}$, $\Delta:S\ra\cQ$. This proves Lemma \ref{lemma:overlap_adv}. 
\end{proof}

\begin{proof}[Proof of Corollary \ref{cor:overlapped_recurrent_class}]
By Lemma \ref{lemma:overlap_ctrl}, the convexity and overlap-connectedness assumption in the first statement of Corollary \ref{cor:overlapped_recurrent_class} ensures that every $\Delta:S\to\cQ$ is weakly communicating. Combined with the compactness of $\cQ$ and of each $\cP_s$ for $s\in S$, the assumptions of Theorem \ref{thm:wc_ctrl} are satisfied. Hence, \eqref{eqn:r_bellman_eqn_const_gain} admits a solution.  

Similarly, by Lemma \ref{lemma:overlap_adv}, the convexity and overlap-connectedness assumption in the latter statement implies that every $p\in\cP$ is weakly communicating. Together with the compactness of $\cQ$ and of each $\cP_s$, this verifies the assumptions of Theorem \ref{thm:wc_adv}. Therefore, \eqref{eqn:inf_sup_eqn_const_gain} admits a solution.  
\end{proof}

\section{Proofs for Section \ref{sec:HD_S}}

\subsection{Proof of Proposition \ref{prop:hd_eps_opt_policy}}\label{sec:proof_prop_RL_policy}
\begin{proof}
First, notice that 
\begin{equation}\label{eqn:to_use_415}
\begin{aligned}
0&\leq \underline{\alpha}(\mu,\PiH,\KS) - \inf_{\kappa\in\KS}\underline{\alpha} (\mu,\piRL,\kappa)  \\
&\stackrel{(i)}{\leq} \inf_{\kappa\in\KS}\sup_{\pi\in\PiH}\underline{\alpha}(\mu,\pi,\kappa) - \inf_{\kappa\in\KS}\underline{\alpha} (\mu,\piRL,\kappa)\\
&\stackrel{(ii)}{=} \inf_{\kappa\in\KS}\sup_{\pi\in\PiS}\underline{\alpha}(\mu,\pi,\kappa) - \inf_{\kappa\in\KS}\underline{\alpha} (\mu,\piRL,\kappa) \\
&\leq \inf_{\kappa\in\KS}\abs{\sup_{\pi\in\PiS}\underline{\alpha}(\mu,\pi,\kappa) - \underline{\alpha} (\mu,\piRL,\kappa)}\\
&\stackrel{(iii)}{=} \inf_{\kappa\in\KS}\abs{\alpha^*_\kappa- \underline{\alpha} (\mu,\piRL,\kappa)}\\
\end{aligned}
\end{equation}
where $(i)$ follows from weak duality and $(ii)$ uses the optimality of $\PiS$ for classical MDPs (see \citet{puterman2014MDP}). For $(iii)$, note that since $p\in \cP$ is weakly communicating, by the standard results from classical MDPs (also see \citet{puterman2014MDP}), we have that for each $\kappa\in\KS$, there exists an optimal deterministic Markov time-homogeneous policy $\Delta_\kappa$ that achieves an optimal average-reward $\alpha^*_\kappa$.

On the other hand, Algorithm 2 in \citet{zhang2023amdp_regret} and the regret bound therein imply that for any weakly communicating MDP and parameter $\epsilon >0$, there exists a policy $\piRL$ that uses only deterministic actions so that for any $\kappa\in\KS$, w.p. at least $1-\epsilon$
$$\sum_{t=0}^{n-1}[\alpha_\kappa^*-r(X_t,A_t)]= \widetilde O (\spnorm{h_\kappa^*}\sqrt{n})$$
for all sufficiently large $n$, where $\widetilde O(\cd)$ suppress the dependence on $\log n$ and $\log(1/\epsilon)$. 
This implies that $$0\leq \alpha_\kappa^* - E_{\mu}^{\pi,\kappa}\frac{1}{n}\sum_{t=0}^{n-1}r(X_t,A_t)= \widetilde O \crbk{\frac{\spnorm{h_\kappa^*}}{\sqrt{n}}}(1-\epsilon) + \epsilon.$$
Hence, we have that 
\begin{equation}\label{eqn:to_use_431}
    \begin{aligned}
        0 &\le \alpha^*_\kappa -\overline{\alpha} (\mu,\piRL,\kappa) \\
        &\leq \alpha^*_\kappa -\underline{\alpha} (\mu,\piRL,\kappa) \\
        &= \limsup_{n\ra\infty}\crbk{\alpha^*_\kappa -  E_{\mu}^{\pi,\kappa}\frac{1}{n}\sum_{t=0}^{n-1}r(X_t,A_t) }\\
        &\leq \epsilon
    \end{aligned}
\end{equation}

Since $\piRL$ only uses deterministic actions and $\set{\delta_a:a\in A}\subseteq \cQ$, $\piRL\in \PiH(\cQ)$. Therefore, going back to \eqref{eqn:to_use_415}, we have that \begin{align*}
 0&\leq \underline{\alpha}(\mu,\PiH,\KS) - \inf_{\kappa\in\KS}\underline{\alpha} (\mu,\piRL,\kappa)   \\
 &\leq \inf_{\kappa\in\KS}\sup_{\pi\in\PiS}\underline{\alpha}(\mu,\pi,\kappa) - \inf_{\kappa\in\KS}\underline{\alpha} (\mu,\piRL,\kappa)\\
 &\leq \inf_{\kappa\in\KS}\abs{\alpha^*_\kappa- \underline{\alpha} (\mu,\piRL,\kappa)} \\
 &\leq \epsilon.  
\end{align*}

Finally, to conclude the proposition, we note that if $\underline\alpha$ is replaced by $\overline{\alpha}$, the derivation in \eqref{eqn:to_use_415} is still valid. This, coupled with \eqref{eqn:to_use_431}, yields the limsup version of Theorem \ref{prop:hd_eps_opt_policy}. 
\end{proof}

\subsection{Proof of Theorem \ref{thm:value_HD_S}}
\begin{proof}
By Proposition \ref{prop:hd_eps_opt_policy}, we have that for any $\epsilon > 0$, 
$$ -\epsilon \leq \underline{\alpha}(\mu,\PiH,\KS) - \inf_{\kappa\in\KS}\sup_{\pi\in\PiS}\underline{\alpha}(\mu,\pi,\kappa)  \leq \epsilon.$$
Also, by Markov optimality in classical MDPs \citep{puterman2014MDP}, $\sup_{\pi\in\PiS}\underline{\alpha}(\mu,\pi,\kappa) = \sup_{\pi\in\PiH}\underline{\alpha}(\mu,\pi,\kappa)$.
Since $\epsilon$ can be arbitrarily small, these inequalities imply the liminf version of the first claim Theorem \ref{thm:value_HD_S}. 
The same argument holds when $\underline{\alpha}$ is replaced with $\overline{\alpha}$.

To show the second claim, we note that the same argument as in the proof of Theorem \ref{thm:general_dpp} will imply that the solution $\alpha'$ to \eqref{eqn:inf_sup_eqn_const_gain} is the optimal average-reward $$\alpha'=\inf_{\kappa\in\KH}\sup_{\pi\in\PiH}\underline\alpha(\mu,\pi,\kappa) = \inf_{\kappa\in\KS}\sup_{\pi\in\PiH}\underline\alpha(\mu,\pi,\kappa)=\inf_{\kappa\in\KS}\sup_{\pi\in\PiS}\underline\alpha(\mu,\pi,\kappa).$$ 
So, $\alpha'$ corresponds to the inf-sup control value, which is shown to be equal to $\underline{\alpha}(\mu,\PiH,\KS)$. The same holds true if $\underline\alpha$ is replaced by $\overline{\alpha}$. 
\end{proof}

\subsection{Proof of Corollary \ref{cor:iff}}
\begin{proof}
    By Theorem \ref{thm:wc_ctrl} and \ref{thm:wc_adv}, solutions $(u^*,\alpha^*)$ and $(u',\alpha')$ to \eqref{eqn:r_bellman_eqn_const_gain} and \eqref{eqn:inf_sup_eqn_const_gain} exists under the assumptions of Corollary \ref{cor:iff}. Hence, by Theorem \ref{thm:general_dpp} and \ref{thm:value_HD_S}, $$\underline{\alpha}(\mu,\PiH,\KS) \;=\; \overline{\alpha}(\mu,\PiH,\KS) \;=\; \alpha',$$
    while 
    $$\underline{\alpha}(\mu,\PiS,\KS) \;=\; \overline{\alpha}(\mu,\PiS,\KS) \;=\; \alpha^*.$$
    This implies the corollary. 
\end{proof}
\end{document}